\theoremstyle{plain}
\newtheorem{thm}{Theorem}[section] 
\newtheorem{prop}[thm]{Proposition} 
\newtheorem{lem}[thm]{Lemma}
\theoremstyle{remark}
\newtheorem{rem}{Remark}[section]
\theoremstyle{definition}
\numberwithin{equation}{section}
\renewcommand*{\div}{\operatorname{div}}
\newcommand*{\curl}{\operatorname{curl}}
\newcommand*{\supp}{\operatorname{supp}}
\newcommand*{\bydef}{\overset{\rm def}{=}}
\newcommand*{\norm}[1]{\left\Vert #1\right\Vert}
\begin{document}
\title[Stability analysis of two-dimensional ideal flows]{Stability analysis of two-dimensional ideal flows with applications to viscous fluids and plasmas}

\author{Diogo Ars\'enio}
\author{Haroune Houamed}
\address{New York University Abu Dhabi \\
Abu Dhabi \\
United Arab Emirates} 
\email{\href{mailto:diogo.arsenio@nyu.edu}{diogo.arsenio@nyu.edu}, \href{mailto:haroune.houamed@nyu.edu}{haroune.houamed@nyu.edu}}

\keywords{Perfect incompressible two-dimensional fluids, Maxwell's system, plasmas, Yudovich's theory, inviscid limit, singular limits.}

\date{\today}

\begin{abstract}
	We are interested in the stability analysis of two-dimensional incompressible inviscid fluids. Specifically, we revisit a known recent result on the stability of Yudovich's solutions to the incompressible Euler equations in $L^\infty([0,T];H^1)$ by providing a new approach to its proof based on the idea of compactness extrapolation and by extending it to the whole plane.
	
	This new method of proof is robust and, when applied to viscous models, leads to a remarkable logarithmic improvement on the rate of convergence in the vanishing viscosity limit of two-dimensional fluids. Loosely speaking, this logarithmic gain is the result of the fact that, in appropriate high-regularity settings, the smoothness of solutions to the Euler equations at times $t\in [0,T)$ is strictly higher than their regularity at time $t=T$. This ``memory effect'' seems to be a general principle which is not exclusive to fluid mechanics. It is therefore likely to be observed in other setting and deserves further investigation.
	
	Finally, we also apply the stability results on Euler systems to the study of two-dimensional ideal plasmas and establish their convergence, in strong topologies, to solutions of magnetohydrodynamic systems, when the speed of light tends to infinity. The crux of this asymptotic analysis relies on a fine understanding of Maxwell's system.
\end{abstract}

\maketitle

\tableofcontents

\newpage

\section{Stability of the incompressible Euler system in the plane}  \label{sec.stability Euler}

\subsection{Introduction and first main result}

We are interested in the stability of the two-dimensional incompressible Euler equations 
\begin{equation}\label{Euler*}
	\begin{cases}
		\begin{aligned}
 \partial_t u^{\varepsilon} +u^{\varepsilon} \cdot\nabla u^{\varepsilon}    + \nabla p^{\varepsilon} &=   g^{\varepsilon}, 
			\\
			 \div u^{\varepsilon} &= 0,\\
			{ u^{\varepsilon}}_{|_{t=0}} &=u_{0}^{\varepsilon},
		\end{aligned}
	\end{cases}
\end{equation}
in the limit $\varepsilon\rightarrow 0$, where $u^\varepsilon=u^\varepsilon(t,x)$ are fluid velocity fields and $g^\varepsilon=g^\varepsilon(t,x)$ are prescribed source terms, with $(t,x)\in \mathbb{R}^+\times\mathbb{R}^2$, taking values in $\mathbb{R}^2$. The pressures $p^\varepsilon$ are real-valued and also depend on $t$ and $x$.

Formally, if the inputs $( g^\varepsilon,u_{0}^{\varepsilon})$ converge   to some states $(g,u_0)$, as $\varepsilon$ vanishes, then it is expected that $u^\varepsilon$ approaches $u$, the solution of the same Euler equations
\begin{equation}\label{Euler}
	\begin{cases}
		\begin{aligned}
 \partial_t u +u  \cdot\nabla u  + \nabla p  &=   g , 
			\\
			 \div u  &= 0,\\
			{ u }_{|_{t=0}} &=u_{0 }.
		\end{aligned}
	\end{cases}
\end{equation} 
The celebrated Yudovich's theorem allows us to formulate the rigorous weak convergence result described in the following statement.

(Note that we employ below the classical notation $H^s$ to denote the Sobolev space of functions $f\in L^2$ such that the Riesz potential $|D|^sf$ also belongs to $L^2$, whereas $W^{1,p}$ denotes the Sobolev space of functions $f\in L^p$ such that $\nabla f\in L^p$. Homogeneous versions of these spaces are defined in a similar fashion.)

\begin{thm}[\cite{Yudovich1}]\label{yudovich:thm}
	Let $\varepsilon \in (0,1)$ and $T\in \mathbb{R}^+\cup \{ \infty\}$.
	For any initial data and source satisfying
	\begin{equation*}
		\begin{aligned}
			u_{0}^{\varepsilon}\in H^1 (\mathbb{R}^2),
			&&&
			\curl u_{0}^{\varepsilon} \in   L^\infty(\mathbb{R}^2),
			\\
			g^{\varepsilon }\in L^1([0,T];H^1 (\mathbb{R}^2)),
			&&&
			\curl g^{\varepsilon }\in L^1([0,T];  L^\infty (\mathbb{R}^2)),
		\end{aligned}
	\end{equation*}
	uniformly in $\varepsilon$,
	there is a unique solution $u^\varepsilon$   to \eqref{Euler*} on $[0,T]$  enjoying the bounds
	$$u^{\varepsilon }\in C ([0,T]; H^1 (\mathbb{R}^2)) , \quad \curl u^{\varepsilon }\in  L^\infty ([0,T];  L^\infty(\mathbb{R}^2)) ,$$
	uniformly in $\varepsilon.$
	Moreover, one has,  for all $t\in [0,T]$, that 
	\begin{equation}\label{Transport:ES}
		\norm {\curl u^\varepsilon(t)}_{L^p} \leq  \norm {\curl u^\varepsilon_0}_{L^p} + \norm {\curl  g^\varepsilon}_{L^1([0,t);L^p)}, \quad \text{for all } p\in [1,\infty].
	\end{equation}
	Furthermore, if $(u_0,g) $ denotes a weak limit, in the sense of distributions, of the family $( u^\varepsilon_0,g^\varepsilon)_{\varepsilon>0}$, then $u^\varepsilon$  converges weakly, in the sense of distributions,   to the unique solution $u$ of the Euler system \eqref{Euler}.
\end{thm}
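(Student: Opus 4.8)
The plan is to put \eqref{Euler*} and \eqref{Euler} on the same footing: for each fixed $\varepsilon$ the first three assertions are exactly Yudovich's theorem, and since every estimate below depends on the data only through the norms displayed in the hypotheses, uniformity in $\varepsilon$ will come for free. So I first fix $\varepsilon$, drop it from the notation, and pass to the \emph{vorticity formulation}: with $\omega\bydef\curl u$ and $f\bydef\curl g$, the system \eqref{Euler} is equivalent, for divergence-free fields with suitable decay, to the transport equation $\partial_t\omega+u\cdot\nabla\omega=f$ together with the Biot--Savart law $u=\nabla^\perp\Delta^{-1}\omega=\tfrac{1}{2\pi}\tfrac{x^\perp}{|x|^2}*\omega$. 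Testing the transport equation against $|\omega|^{p-2}\omega$ and using $\div u=0$ gives $\tfrac{d}{dt}\norm{\omega}_{L^p}\le\norm{f}_{L^p}$, hence \eqref{Transport:ES} after integration and an $L^\infty$ endpoint, while the $L^2$ energy identity controls $\norm{u}_{L^\infty_tL^2}$ in terms of $\norm{g}_{L^1_tL^2}$. Since $\nabla u$ is a Calder\'on--Zygmund operator applied to $\omega$, one has $\norm{\nabla u}_{L^p}\lesssim p\,\norm{\omega}_{L^p}$ for $2\le p<\infty$, and interpolating against the $L^2$ bound shows that $u\in L^\infty$ and is log-Lipschitz on any finite time interval,
\begin{equation*}
	|u(t,x)-u(t,y)|\lesssim|x-y|\log\Big(e+\tfrac{1}{|x-y|}\Big),
\end{equation*}
with constant controlled by $\norm{u}_{L^\infty_tL^2}+\norm{\omega}_{L^\infty_t(L^2\cap L^\infty)}$.

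For \textbf{existence} I would regularise: take mollified (and spatially truncated) data $(u_0^\delta,g^\delta)$, solve the resulting smooth two-dimensional Euler problems — which are globally well posed — and note that the bounds just described hold for $u^\delta$ uniformly in $\delta$. In particular $u^\delta$ is bounded in $C([0,T];H^1)$, and from $\partial_t u^\delta=-\mathbb P\,\div(u^\delta\otimes u^\delta)+\mathbb P g^\delta$ together with $H^1(\mathbb R^2)\hookrightarrow L^q$ for all $q<\infty$ one bounds $\partial_t u^\delta$ in $L^\infty([0,T];H^{-1}_{\loc})$. The Aubin--Lions--Simon lemma then yields a subsequence converging strongly in $C([0,T];L^2_{\loc})$, which suffices to pass to the limit in $u^\delta\otimes u^\delta$ and produce a weak solution of \eqref{Euler}; the bound \eqref{Transport:ES} and the $C_tH^1$, $L^\infty_tL^\infty$ bounds survive the limit by weak lower semicontinuity, and for $T=\infty$ the global (non-blow-up) statement follows because the a priori bounds do not deteriorate in time beyond what the source contributes.

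The \textbf{uniqueness} step is the heart of the matter, and I would run Yudovich's classical argument. If $u_1,u_2$ are two solutions with the stated regularity and the same data, then $w\bydef u_1-u_2$ solves $\partial_t w+u_1\cdot\nabla w+w\cdot\nabla u_2+\nabla(p_1-p_2)=0$ with $\div w=0$, $w|_{t=0}=0$; testing against $w$, using $\div u_1=0$, and then H\"older and interpolation gives, for every $p\in[2,\infty)$,
\begin{equation*}
	\tfrac12\tfrac{d}{dt}\norm{w}_{L^2}^2\le\norm{w}_{L^{2p'}}^2\,\norm{\nabla u_2}_{L^p}\lesssim p\,\norm{w}_{L^2}^{2(1-1/p)}\norm{w}_{L^\infty}^{2/p},
\end{equation*}
where $\norm{w}_{L^\infty}$ is finite by the previous step and $\norm{\nabla u_2}_{L^p}\lesssim p\,\norm{\omega_2}_{L^2\cap L^\infty}$ is bounded uniformly in $p$. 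Writing $X=\norm{w}_{L^2}^2$, this reads $\tfrac{d}{dt}X^{1/p}\lesssim 1$ with an implicit constant independent of $p$, so $X(t)\le(Ct)^p$; letting $p\to\infty$ forces $X\equiv0$ on a short interval, and iteration covers $[0,T]$. (Equivalently one closes the inequality through Osgood's lemma, which applies because $r\mapsto r\log(e+1/r)$ is not Osgood-integrable at $0$.)

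Finally, for the \textbf{weak stability} assertion I note that the bounds above are uniform in $\varepsilon$: $u^\varepsilon$ is bounded in $C([0,T];H^1)$ and $\partial_t u^\varepsilon$ in $L^\infty([0,T];H^{-1}_{\loc})$, so Aubin--Lions--Simon again gives a subsequence converging strongly in $C([0,T];L^2_{\loc})$, which passes to the limit in $u^\varepsilon\otimes u^\varepsilon$; combined with $g^\varepsilon\rightharpoonup g$ and $u_0^\varepsilon\rightharpoonup u_0$, the limit $u$ is a weak solution of \eqref{Euler} satisfying the Yudovich bounds (once more by lower semicontinuity), hence \emph{the} unique such solution by the uniqueness step, and a standard subsequence argument promotes this to weak convergence of the full family. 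I expect the only genuine difficulties to be the uniqueness argument and, on the whole plane, the care needed to legitimise the Biot--Savart law and the log-Lipschitz bound of the first step despite the failure of $H^1(\mathbb R^2)\hookrightarrow L^\infty$ — a failure compensated precisely by the $L^\infty$ control on the vorticity.
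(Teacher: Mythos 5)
The paper does not prove Theorem~\ref{yudovich:thm}: it is stated as a black-box citation to Yudovich's original work \cite{Yudovich1}, so there is no internal proof to compare yours against. That said, your sketch is correct and is essentially the classical Yudovich argument (vorticity transport and $L^p$ conservation, Biot--Savart with the Calder\'on--Zygmund bound $\|\nabla u\|_{L^p}\lesssim p\|\omega\|_{L^p}$, existence by mollification and Aubin--Lions--Simon, and uniqueness by closing the $L^2$ estimate across $L^p$). The one place where your route differs stylistically from what the paper does use is the uniqueness/stability closure: you fix a time and send $p\to\infty$ in the inequality $\frac{d}{dt}\|w\|_{L^2}^2\lesssim p\,\|w\|_{L^2}^{2(1-1/p)}$ to conclude $\|w\|_{L^2}=0$ on a short interval and iterate, whereas Section~\ref{section:recall} of the paper (proving the quantitative stability estimate \eqref{u:CV:final0}, not the theorem itself) picks a time-dependent exponent $q(t)=2\log\big(e/f^\varepsilon(t)^2\big)$ and invokes Osgood's lemma. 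You already flagged this equivalence parenthetically, and both are standard variants of the same idea; the time-dependent version has the advantage of yielding the explicit double-exponential rate in \eqref{u:CV:final0} rather than just uniqueness. Two small remarks for completeness: (i) the $L^\infty$ bound on $u$ that your uniqueness step uses does not come from $H^1(\mathbb{R}^2)$ alone, which is borderline, but from $u\in L^2$ combined with $\nabla u\in L^p$ for some $p>2$ (available since $\omega\in L^2\cap L^\infty$) via $W^{1,p}(\mathbb{R}^2)\hookrightarrow L^\infty$ — you gesture at this but it is worth stating; (ii) on the plane the Biot--Savart representation $u=\nabla^\perp\Delta^{-1}\omega$ needs the additional $u\in L^2$ normalisation to pin down $u$ uniquely, which your hypotheses supply.
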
 

We are now interested in refining our understanding of the limit     of $u^\varepsilon$ to $u$, provided   the inputs of \eqref{Euler*} converge in a suitable strong sense.  Our first main result is featured in the following theorem, which is a variant of Theorem 1 from \cite{CDE} extended to an unbounded domain and more general source terms.

\begin{thm}\label{thm2} Let $T\in\mathbb{R}^+$ and $(u_0^\varepsilon,g^\varepsilon)_{\varepsilon>0}$ be a family of initial data and source terms satisfying the assumptions of Theorem \ref{yudovich:thm},  which converges to  $ (u_0,g)$   strongly in $H^1(\mathbb{R}^2)\times L^1([0,T];H^1(\mathbb{R}^2))$, as $\varepsilon\rightarrow 0$.  Then, it holds that
$$
\lim_{\varepsilon\rightarrow 0}  \sup_{t\in [0,T]}\norm {u^\varepsilon(t) - u (t) } _{H^1(\mathbb{R}^2)} =0,
$$
where $u^\varepsilon$ and $u$ are the unique solutions of \eqref{Euler*} and \eqref{Euler}, respectively.
\end{thm}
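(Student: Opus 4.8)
The plan is to exploit the two conserved (or controlled) quantities available for Yudovich solutions—the $L^2$ energy and the $L^p$ norms of the vorticity $\omega^\varepsilon = \curl u^\varepsilon$—together with the uniqueness mechanism behind Yudovich's theorem, recast as a stability estimate in a \emph{negative} regularity norm. Writing $v^\varepsilon = u^\varepsilon - u$, $q^\varepsilon = p^\varepsilon - p$, and $h^\varepsilon = g^\varepsilon - g$, the difference solves
\begin{equation*}
\partial_t v^\varepsilon + u^\varepsilon\cdot\nabla v^\varepsilon + v^\varepsilon\cdot\nabla u + \nabla q^\varepsilon = h^\varepsilon, \qquad \div v^\varepsilon = 0, \qquad v^\varepsilon|_{t=0} = u_0^\varepsilon - u_0.
\end{equation*}
A classical Yudovich-type argument gives, for every $p$ large, a bound of the form $\norm{v^\varepsilon(t)}_{L^2}^2 \lesssim \big(\norm{v^\varepsilon(0)}_{L^2}^2 + \norm{h^\varepsilon}_{L^1_tL^2}^2 + C_t^{1/p}\big)^{1-\gamma(t)}$ where $\gamma(t)$ depends on the vorticity bounds, or more robustly one controls $\norm{v^\varepsilon}_{L^\infty_t \dot H^{-s}}$ for small $s>0$; in either case the hypothesis $u_0^\varepsilon\to u_0$ in $H^1$ and $g^\varepsilon\to g$ in $L^1_tH^1$ forces $v^\varepsilon \to 0$ in $C([0,T];L^2)$ (indeed in $C([0,T];H^{-s})$ and hence, by interpolation with the uniform $H^1$ bound, in $C([0,T];L^2)$).

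The heart of the matter, and what the paper advertises as the \emph{compactness extrapolation} idea, is to upgrade this $L^2$ convergence of velocities to $H^1$ convergence, i.e. to $L^2$ convergence of vorticities. First I would note that it suffices to prove convergence of the vorticities in $L^2$, since $\nabla v^\varepsilon = \nabla\Delta^{-1}\nabla^\perp(\omega^\varepsilon-\omega)$ is bounded on $L^2$. Because each $\omega^\varepsilon$ is transported by the divergence-free field $u^\varepsilon$ (up to the vorticity source $\curl g^\varepsilon$), its $L^p$ norms are propagated with the uniform bounds from \eqref{Transport:ES}; in particular $\omega^\varepsilon - \omega$ is bounded in $L^\infty_t L^p$ for all $p\in[2,\infty]$, uniformly in $\varepsilon$. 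The strategy is then to test the transport equation for $\omega^\varepsilon - \omega$ and close an energy-type inequality for $\norm{\omega^\varepsilon(t)-\omega(t)}_{L^2}^2$. Expanding,
\begin{equation*}
\frac{d}{dt}\,\tfrac12\norm{\omega^\varepsilon-\omega}_{L^2}^2 = -\!\int (\omega^\varepsilon-\omega)\, v^\varepsilon\cdot\nabla\omega \,dx + \int (\omega^\varepsilon-\omega)\,\curl h^\varepsilon\, dx,
\end{equation*}
where the term $\int(\omega^\varepsilon-\omega)\,u^\varepsilon\cdot\nabla(\omega^\varepsilon-\omega)\,dx$ vanishes by incompressibility. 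The source term is harmless: $\curl h^\varepsilon \to 0$ in $L^1_tL^2$ by hypothesis. The dangerous term is $R^\varepsilon(t) := \int (\omega^\varepsilon-\omega)\, v^\varepsilon\cdot\nabla\omega\, dx$, which cannot be absorbed by a Gr\"onwall loop because $\nabla\omega$ is merely a distribution (only $\omega\in L^\infty_tL^\infty$ is known for the limit). The key observation—this is the ``extrapolation of compactness''—is that $R^\varepsilon$ does not need to be small pointwise in time to be controlled: one integrates in time and rewrites $R^\varepsilon$ after an integration by parts in $x$ as $-\int (v^\varepsilon\cdot\nabla)(\omega^\varepsilon-\omega)\,\omega\,dx - \int (\nabla v^\varepsilon : \text{stuff})\,\omega$, and then uses the \emph{equation} for $\omega^\varepsilon - \omega$ again to trade the transport derivative for time derivatives and lower-order terms, producing a commutator structure that is quadratically small in $v^\varepsilon$ (already known to vanish in $L^\infty_tL^2$) times quantities bounded uniformly in the vorticity norms. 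Concretely, I expect to arrive at an inequality of the shape
\begin{equation*}
\norm{\omega^\varepsilon(t)-\omega(t)}_{L^2}^2 \;\le\; \eta_\varepsilon \;+\; C\!\int_0^t \Phi\big(\norm{\omega^\varepsilon(s)-\omega(s)}_{L^2}^2\big)\,ds,
\end{equation*}
with $\eta_\varepsilon\to 0$ (built from $\norm{u_0^\varepsilon-u_0}_{H^1}$, $\norm{h^\varepsilon}_{L^1_tH^1}$, and the already-established $L^2$ velocity convergence) and $\Phi$ an Osgood modulus (typically $\Phi(r)=r\log(e+1/r)$, exactly as in Yudovich's uniqueness proof), from which the uniqueness/stability part of Osgood's lemma delivers $\sup_{[0,T]}\norm{\omega^\varepsilon(t)-\omega(t)}_{L^2}\to 0$.

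The main obstacle is precisely making the estimate of $R^\varepsilon$ rigorous: $\nabla\omega$ has no pointwise meaning, so every manipulation must be performed either at the level of mollified/regularized solutions (with uniform-in-mollification bounds) or reinterpreted via duality using only $\omega\in L^\infty_tL^p$ and $v^\varepsilon,\,\omega^\varepsilon-\omega\in L^\infty_tL^2\cap L^\infty_tL^p$. The ``extrapolation'' is what lets us get away with this: we never need $\omega^\varepsilon\to\omega$ in a strong norm as an input—only in $L^2$-weak and in negative Sobolev spaces, plus the uniform $L^p$ bounds—and the structure of the transport equation converts this weak convergence into the strong $L^2$ convergence of the vorticities. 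A secondary technical point, specific to the unbounded domain $\mathbb R^2$ (this is where the statement genuinely extends \cite{CDE}), is that $\dot H^1$ and $\dot H^{-1}$ norms of the velocity are not equivalent to vorticity norms in the same clean way as on a bounded domain, and the Biot–Savart operator $\nabla^\perp\Delta^{-1}$ must be handled with care at low frequencies; I would control the low-frequency part of $v^\varepsilon$ separately using the $L^2$ energy bound and the $L^1\hookrightarrow$ (via the vorticity being in $L^1$, which follows from $L^2\cap L^\infty$ plus finite-energy considerations or is assumed through $\curl u_0^\varepsilon\in L^1$ implicitly) estimates, so that only the high-frequency part, where Biot–Savart is bounded $L^2\to \dot H^1$, enters the critical term.
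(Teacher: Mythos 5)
Your overall plan --- first prove $u^\varepsilon\to u$ in $L^\infty_t L^2$, then upgrade to $L^\infty_t\dot H^1$ using only $L^2$-level information on $\omega$ --- matches the paper's philosophy, but the mechanism you propose for the upgrade is not the one the paper uses and it has a genuine gap at its center. You write the vorticity difference equation and attempt an $L^2$ energy estimate for $\widetilde\omega^\varepsilon = \omega^\varepsilon - \omega$, correctly identify the obstruction $R^\varepsilon = -\int (v^\varepsilon\cdot\nabla\widetilde\omega^\varepsilon)\,\omega\,dx$ (after the integration by parts, and noting that the ``$\nabla v^\varepsilon$'' piece vanishes by incompressibility), and then propose to resolve it by ``using the equation again'' to produce a commutator that is quadratic in $v^\varepsilon$. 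That step is never carried out, and I do not see how to carry it out: $\nabla\widetilde\omega^\varepsilon$ lives in $H^{-1}_x$ while $v^\varepsilon\omega$ is only in $L^2_x$, so the integral has no a priori meaning; substituting the equation trades $v^\varepsilon\cdot\nabla\widetilde\omega^\varepsilon$ for $\partial_t\widetilde\omega^\varepsilon$ tested against $\omega$, which is no better. The only way this direct difference-equation strategy is known to close is the one in \cite{CDE}: mollify, propagate the loss-of-regularity bound $\nabla\omega^\delta\in L^\infty_t L^{p(t)}$ with $p(t)$ decreasing in $t$ (not uniform in the mollification), and combine with an exponential-integrability estimate for $\nabla u$. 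You allude to regularization but supply none of these ingredients, so the Osgood inequality you anticipate does not materialize. There is also a structural warning sign: an Osgood inequality of the form you write would deliver a quantitative rate of convergence for $\norm{\widetilde\omega^\varepsilon}_{L^\infty_tL^2}$, whereas the paper obtains such a rate only under additional Besov regularity (Theorem~\ref{thm-rate-general}); under the bare hypotheses of Theorem~\ref{thm2} the convergence is genuinely rateless.

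The paper's actual argument sidesteps the difference equation entirely, which is precisely the claimed novelty. Using the Compactness Extrapolation Lemma (Lemma~\ref{lemma:extrapolation}) and the time-continuity of $\omega$ in $L^2$, the problem reduces to showing $\norm{(\mathrm{Id}-S_0^\varepsilon)\omega^\varepsilon}_{L^\infty_tL^2}\to 0$ for a scale $\Theta_\varepsilon$ calibrated to the already-established $L^\infty_tL^2$ rate of $u^\varepsilon-u$. One then runs the $L^2$ energy estimate on the high-frequency block of the single solution $\omega^\varepsilon$ (and separately of $\omega$), never of the difference; the nonlinear term $\mathcal J$ becomes a low--high frequency interaction of one vorticity with itself, controlled by the localization/commutator lemmas (Lemmas~\ref{standard:lemma}--\ref{T3-ES}) and by the vanishing of intermediate dyadic annuli \eqref{compactness1}. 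No $v^\varepsilon\cdot\nabla\omega$ cross-term ever appears, and no mollification, $BMO$ estimate, or $W^{1,p(t)}$ loss bound is needed. Your secondary concern about low frequencies of Biot--Savart on $\mathbb R^2$ and $L^1$ integrability of $\omega$ is also a red herring: $L^2\cap L^\infty$ does not embed into $L^1$ on the plane and the paper never needs $\omega\in L^1$; the low-frequency piece is controlled directly by $\norm{\mathds{1}_{|D|\le\Theta_\varepsilon}(\omega^\varepsilon-\omega)}_{L^2}\le\Theta_\varepsilon\norm{u^\varepsilon-u}_{L^2}$, which is exactly why the argument transfers cleanly to the whole plane.
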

\begin{rem}
Observe in the above theorem that the convergence holds in the whole space $\mathbb{R}^2$.   Moreover,  employing  the Gagliardo--Nirenberg and H\"{o}lder inequalities,  note that one can show the convergence of the velocity in $L^\infty_t(  L^\infty \cap  W  ^{1,p})$, for all $p\in [2,\infty)$.  Accordingly, Theorem \ref{thm2} provides   a  convergence result  in spaces similar to those given in Theorem \ref{yudovich:thm}. 
\end{rem}

The novelty of our work lies in the method of proof of Theorem \ref{thm2}, given in Section \ref{subsection.THM2}, below, and in applications of this method in Sections \ref{section:rate} and \ref{section:plasma}, later on, where we give our second and third main results in Theorems \ref{thm-rate-general} and \ref{thm-EM}, respectively.

It is to be emphasized that our approach also allows us to extend Theorem \ref{thm2} to an inviscid limit. More precisely, considering the Navier--Stokes system
\begin{equation}\label{NS*}
	\begin{cases}
		\begin{aligned}
 \partial_t u^{\varepsilon} +u^{\varepsilon}  \cdot\nabla u^{\varepsilon}   - \varepsilon \Delta u^{\varepsilon}   + \nabla p^{\varepsilon}  &=   g^{\varepsilon} , 
			\\
			 \div u^{\varepsilon}  &= 0,\\
			{ u^{\varepsilon} }_{|_{t=0}} &=u_{0}^{\varepsilon} ,
		\end{aligned}
	\end{cases}
\end{equation}
we have the next result, whose proof is a straighforward adaptation of the proof of Theorem \ref{thm2}.

\begin{thm}\label{thm3}
	Let $T\in\mathbb{R}^+$ and $(u_0^\varepsilon,g^\varepsilon)_{\varepsilon>0}$ be a family of initial data and source terms satisfying the assumptions of Theorem \ref{yudovich:thm},  which converges to  $ (u_0,g)$   strongly in $H^1(\mathbb{R}^2)\times L^1([0,T];H^1(\mathbb{R}^2))$, as $\varepsilon\rightarrow 0$.  Then, it holds that
$$
\lim_{\varepsilon\rightarrow 0}  \sup_{t\in [0,T]}\norm {u^\varepsilon(t) - u (t) } _{H^1(\mathbb{R}^2)} =0,
$$
where $u^\varepsilon$ and $u$ are the unique solutions of \eqref{NS*} and \eqref{Euler}, respectively.
\end{thm}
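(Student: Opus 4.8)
The plan is to run the compactness-extrapolation scheme behind Theorem~\ref{thm2}, checking that the additional viscous contributions in \eqref{NS*} are harmless. Write $\omega^\varepsilon\bydef\curl u^\varepsilon$ and $\omega\bydef\curl u$; these solve the transport--diffusion equation $\partial_t\omega^\varepsilon+u^\varepsilon\cdot\nabla\omega^\varepsilon-\varepsilon\Delta\omega^\varepsilon=\curl g^\varepsilon$ and the transport equation $\partial_t\omega+u\cdot\nabla\omega=\curl g$, respectively. \emph{Step 1 (uniform bounds and soft compactness).} Since the Laplacian does not increase $L^p$ norms along the flow, the maximum principle gives for \eqref{NS*} the same vorticity estimate \eqref{Transport:ES} as for \eqref{Euler*}, uniformly in $\varepsilon$; combined with the basic energy estimate this yields uniform bounds for $u^\varepsilon$ in $L^\infty([0,T];H^1(\mathbb{R}^2))$, for $\omega^\varepsilon$ in $L^\infty([0,T];L^2\cap L^\infty(\mathbb{R}^2))$, and for $\sqrt\varepsilon\,\nabla u^\varepsilon$, $\sqrt\varepsilon\,\nabla\omega^\varepsilon$ in $L^2([0,T]\times\mathbb{R}^2)$. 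Feeding these into the equations shows $\partial_t u^\varepsilon,\partial_t\omega^\varepsilon$ are uniformly bounded in $L^\infty_tH^{-1}+L^1_tL^2$, so a standard Aubin--Lions--Simon argument makes $(u^\varepsilon)_{\varepsilon>0}$ relatively compact in $C([0,T];L^2_\loc(\mathbb{R}^2))$. Passing to the limit in \eqref{NS*}, where the viscous term vanishes in $\mathcal D'$, every cluster point is a Yudovich solution of \eqref{Euler} with data $(u_0,g)$, hence equals $u$ by the uniqueness in Theorem~\ref{yudovich:thm}; thus the whole family converges, $u^\varepsilon\to u$ in $C([0,T];L^2_\loc(\mathbb{R}^2))$, and in particular $u^\varepsilon(t)\rightharpoonup u(t)$ and $\omega^\varepsilon(t)\rightharpoonup\omega(t)$ weakly in $L^2(\mathbb{R}^2)$ for every $t\in[0,T]$.

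\emph{Step 2 (extrapolation of strong convergence at fixed times).} The mechanism is that the $L^2$ balances for the velocity and for the vorticity are closed: \eqref{NS*} yields, for all $t$,
\begin{equation*}
\norm{u^\varepsilon(t)}_{L^2}^2+2\varepsilon\int_0^t\norm{\nabla u^\varepsilon}_{L^2}^2=\norm{u_0^\varepsilon}_{L^2}^2+2\int_0^t\langle g^\varepsilon,u^\varepsilon\rangle,\qquad\norm{\omega^\varepsilon(t)}_{L^2}^2+2\varepsilon\int_0^t\norm{\nabla\omega^\varepsilon}_{L^2}^2=\norm{\omega_0^\varepsilon}_{L^2}^2+2\int_0^t\langle\curl g^\varepsilon,\omega^\varepsilon\rangle,
\end{equation*}
while $u$ and $\omega$ satisfy the same identities without the dissipative terms. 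As $\varepsilon\to0$ the right-hand sides converge, because $u_0^\varepsilon\to u_0$, $\curl u_0^\varepsilon\to\curl u_0$ in $L^2$, $g^\varepsilon\to g$, $\curl g^\varepsilon\to\curl g$ in $L^1_tL^2$, and each product passes to the limit by writing it as a strongly convergent factor against a bounded one plus a fixed factor against a weakly convergent one (Step 1). Discarding the nonnegative viscous terms gives $\limsup_{\varepsilon\to0}\norm{\omega^\varepsilon(t)}_{L^2}\le\norm{\omega(t)}_{L^2}$, which together with weak lower semicontinuity forces $\norm{\omega^\varepsilon(t)}_{L^2}\to\norm{\omega(t)}_{L^2}$; combined with weak convergence and the uniform convexity of $L^2$, we obtain $\omega^\varepsilon(t)\to\omega(t)$ strongly in $L^2(\mathbb{R}^2)$ for each fixed $t\in[0,T]$, and likewise $u^\varepsilon(t)\to u(t)$.

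\emph{Step 3 (uniformity in time and conclusion).} To make the convergence uniform, note that the dissipation defect $D^\varepsilon(t)\bydef2\varepsilon\int_0^t\norm{\nabla\omega^\varepsilon}_{L^2}^2$ is nondecreasing in $t$, vanishes at $t=0$, and by Step 2 tends to $0$ for each $t$; monotonicity then forces $\sup_{[0,T]}D^\varepsilon=D^\varepsilon(T)\to0$. Reinserting this into the identities of Step 2 — using that $g^\varepsilon,\curl g^\varepsilon$ converge, hence are equi-integrable, in $L^1_tL^2$ — shows $\norm{\omega^\varepsilon(\cdot)}_{L^2}^2\to\norm{\omega(\cdot)}_{L^2}^2$ and $\norm{u^\varepsilon(\cdot)}_{L^2}^2\to\norm{u(\cdot)}_{L^2}^2$ uniformly on $[0,T]$. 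It then remains to upgrade the pointwise strong convergences to uniform ones, i.e.\ to prove $\sup_{t\in[0,T]}\langle\omega(t)-\omega^\varepsilon(t),\omega(t)\rangle\to0$ and its analogue for $u$. For the velocity this follows from the strong local convergence $u^\varepsilon\to u$ in $C_tL^2_\loc$ together with the tightness of $\{u(t):t\in[0,T]\}$ in $L^2(\mathbb{R}^2)$. For the vorticity one splits $\omega(t)=P_{\le N}\omega(t)+(\id-P_{\le N})\omega(t)$ with a frequency cutoff, uses the compactness of $\{\omega(t):t\in[0,T]\}$ in $L^2(\mathbb{R}^2)$ (valid since $\omega\in C_tL^2$, as $u\in C_tH^1$) to make the high-frequency part uniformly small, discretizes $t$ on a fine partition, and controls the finitely many partition times via Step 2 and the intermediate times by combining the equicontinuity of $t\mapsto\omega^\varepsilon(t)$ in $H^{-1}(\mathbb{R}^2)$ (from the bound on $\partial_t\omega^\varepsilon$ in Step 1) with $\norm{P_{\le N}\omega(t)}_{H^1}\le C_N\norm{\omega(t)}_{L^2}$. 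This yields $\sup_t\norm{u^\varepsilon(t)-u(t)}_{L^2}+\sup_t\norm{\omega^\varepsilon(t)-\omega(t)}_{L^2}\to0$, and since $\norm{u^\varepsilon(t)-u(t)}_{H^1}\le\norm{u^\varepsilon(t)-u(t)}_{L^2}+C\norm{\omega^\varepsilon(t)-\omega(t)}_{L^2}$ by the Biot--Savart law on $\mathbb{R}^2$, the theorem follows.

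I expect the main obstacle to be precisely the uniform-in-time step for the vorticity: unlike the velocity, $\omega^\varepsilon$ enjoys no strong spatial compactness (the bound on $\sqrt\varepsilon\,\nabla\omega^\varepsilon$ degenerates as $\varepsilon\to0$), so an Aubin--Lions argument is unavailable; it is the monotonicity of the dissipation defect, allied to the $H^{-1}$ equicontinuity, that closes the gap, and this is the one place where the viscous case genuinely demands more than the inviscid Theorem~\ref{thm2}.
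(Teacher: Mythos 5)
Your proof is correct, but it takes a genuinely different route from the paper's. The paper proves Theorem~\ref{thm3} by the same compactness-extrapolation scheme as Theorem~\ref{thm2}: first the quantitative $L^\infty_tL^2$ stability estimate \eqref{u:CV:final0} controls the low frequencies $\mathds{1}_{|D|\le\Theta_\varepsilon}(\omega^\varepsilon-\omega)$, and then the high-frequency tail $(\id-S_0^\varepsilon)\omega^\varepsilon$ is driven to zero via the dyadic-block energy identity \eqref{energy-HF} and the commutator identities of Lemma~\ref{lemma:iden:1}; the extra viscous term $-\varepsilon\|\sqrt{\id-S_0^\varepsilon}\,\omega^\varepsilon\|_{L^2_t\dot H^1}^2$ in \eqref{Energy-rate} is nonnegative and simply discarded. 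You instead run a qualitative compactness argument: Aubin--Lions gives $u^\varepsilon\to u$ in $C_tL^2_\loc$ and hence $\omega^\varepsilon(t)\rightharpoonup\omega(t)$ in $L^2$ pointwise, the enstrophy balance for \eqref{NS*} plus weak lower semicontinuity force $\|\omega^\varepsilon(t)\|_{L^2}\to\|\omega(t)\|_{L^2}$, Radon--Riesz upgrades this to strong pointwise convergence, and the monotonicity of the dissipation defect $D^\varepsilon(t)$ together with the $H^{-1}$-equicontinuity of $\omega^\varepsilon$ yields uniformity in $t$. Each of these steps holds up on inspection.

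What each route buys. Your argument is shorter and more elementary for the purely qualitative statement — it needs none of the dyadic machinery of Sections~\ref{section:high:freq}--\ref{section:estimate:J1}, and the Navier--Stokes case is if anything cleaner than the Euler one because the dissipation has a definite sign. The trade-off is that it is strictly qualitative: since weak convergence plus energy balance carries no rate, it cannot produce the logarithmically-improved convergence rate of Theorem~\ref{thm-rate-general}, which is the main payoff of the paper's frequency decomposition. One small technical caveat: you invoke the enstrophy \emph{equality} for the Euler vorticity $\omega$ (``$u$ and $\omega$ satisfy the same identities without the dissipative terms''), which is not automatic since the cancellation $\int u\cdot\nabla\omega\,\omega\,dx=0$ is not classically defined for $\omega\in L^2\cap L^\infty$; it does hold for Yudovich solutions (via DiPerna--Lions renormalization, or via the measure-preserving Lagrangian flow), but the point deserves an explicit sentence — the paper itself is careful to circumvent exactly this identity in its justification of \eqref{energy-HF} in Section~\ref{section:energy:identity}.
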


There are several results in the literature investigating the inviscid limit of the Navier--Stokes system in the whole space $\mathbb{R}^2$ or the torus $\mathbb{T}^2$. The first results about such convergence   were established  in the case of strong solutions in \cite{BM81,C86,K72,S71}.

These results have then been further studied in rougher cases, where the convergence is established in the energy space. See  \cite{CW95,CW96} for the case vortex patches, and  \cite{CH96CPDE} where the velocity is not necessary  lipschitz and the rate of convergence deteriorates over time. Furthermore, optimal rates of convergence for velocities in vortex patch configurations have been established in \cite{AD04, M07}.

We also refer to \cite{BT13} for a survey of results, and a study of the inviscid regime in the weak setting of dissipative solutions.

More recently, there has been progress on the strong inviscid limit of vorticities  in Lebesgue spaces within Yudovich's class of solutions, see \cite{CCS,CDE,HCE}. Therefore, let us now clarify the novelty in our result as well as the key differences with other works.

\subsection{Overview of previous results}

We first note that the convergence in $L^\infty_tL^2(\mathbb{R}^2)$ of the velocity $u^{\varepsilon}$ to a solution of Euler's equations  is   standard and follows from classical  stability estimates based on the uniform bounds $\curl u^{\varepsilon} \in L^\infty_{t,x}$ first established by Yudovich \cite{Yudovich1}.

The corresponding convergence in $ L^\infty_t \dot{H}^1(\Omega)$, with $\Omega=\mathbb{T}^2$, has been established much more recently by  Constantin, Drivas and Elgindi  in  \cite{CDE}.  Their method of   proof  consists in approaching both systems \eqref{Euler} and  \eqref{NS*}  by two linear transport equations with regularized inputs, where the solutions are advected by the   velocities  $ u$ and $u^{\varepsilon}$, respectively.

Hence, the problem of proving the convergence of the velocity in $ L^\infty_t \dot{H}^1(\Omega)$ (or equivalently the vorticity in $ L^\infty_t L^2(\Omega)$) is then  reduced to proving a stability result for the regularized systems.  In    \cite{CDE}, in order to study the  stability      of the vortices in $L^\infty_tL^2$ in the asymptotic regime $\varepsilon\rightarrow 0$, the authors  perform an $L^2$-energy estimate in the vorticity formulation of the regularized linear systems. They are then able to close the estimates by   relying on two essential lemmas.

 The first one \cite[Lemma 1]{CDE} is, roughly speaking, a variation of the John--Nirenberg inequality, which states that, for any $f\in BMO(\mathbb{R}^2)$ and any compact domain $K\subset\mathbb{R}^2$, there exists $\beta>0$ and $C>0$ such that
$$ \int_{K}\exp ( \beta |f(x)|) dx \leq C.  $$
In \cite{CDE}, the authors utilize a variant of the preceding  inequality for $f=\nabla u$, which belongs to $BMO$ as soon as $\nabla\times u$ belongs to $L^\infty$.

 The second crucial ingredient in the proof given in \cite{CDE} establishes a loss estimate for the regularized vortices in
 $$L^\infty\left([0,T];\dot{W}^{1,p(t)}\right),$$
 where $p(t)$ is a decreasing continuous function of time. However, this bound is not uniform with respect to the regularizing parameter in the inputs.

Although the proof from  \cite{CDE} hinges on the boundedness of the torus $\mathbb{T}^2$, we emphasize that a careful and suitable adaptation of that proof would lead to similar results in the whole plane $\mathbb{R}^2$.  Rather than rigorously justifying the extension of the arguments from \cite{CDE} to the whole domain $\mathbb{R}^2$, we choose  to present hereafter a different approach, which does not rely on properties of $BMO$ functions or time-dependent Sobolev norms. Furthermore, this new approach will then play a key role in the proofs of the important applications of Sections \ref{section:rate} and \ref{section:plasma}, below.

Finally, we also refer to  \cite{CCS,HCE} for different proofs using techniques from the celebrated work of DiPerna and Lions \cite{DL89} on renormalized solutions of general transport equations.

\subsection{A new method of proof}

Our proof of Theorem \ref{thm2} establishes the $L^\infty_t\dot{H}^1$ stability of velocity fields without studying the equations satisfied by the difference $ u^\varepsilon-u$ in $L^\infty_t\dot{H}^1$.

Instead, we exploit the convergence of velocities in $L^\infty_tL^2$, which, as previously emphasized, is a classical consequence of the stability estimates established by Yudovich \cite{Yudovich1}, in combination with a time-dependent version of the following simple but essential extrapolation lemma. This result provides a useful criterion which allows us to recover strong compactness properties in endpoint functional settings. In particular, in the notation of the lemma, we will be using the functional spaces $\dot H^{s_0}=L^2$ and $\dot H^{s_1}=\dot H^{1}$.

\begin{lem}[Compactness Extrapolation Lemma]\label{lemma:extrapolation}
	Fix the dimension $d\geq 1$. Let $s_0<s_1$ be two real numbers and $(u^\varepsilon)_{\varepsilon\in (0,1]}$ be  a family of    bounded functions in  $ \dot{H}^{s_0} \cap \dot{H}^{s_1}(\mathbb{R}^d)$, uniformly in $\varepsilon\in (0,1]$. Further assume that
	$$u^\varepsilon\to u
	\quad\text{in } \dot{H}^{s_0}(\mathbb{R}^d).$$
	Then, it holds that
	$$u^\varepsilon\to u
	\quad\text{in } \dot{H}^{s_1}(\mathbb{R}^d)$$
	if and only if
	$$\lim_{\varepsilon \rightarrow 0} \norm {   \mathds{1}_{|D|\geq \Theta_\varepsilon}u^\varepsilon }_{\dot{H}^{s_1}}=0,$$ 
	for some $\Theta_\varepsilon>0$ satisfying
	\begin{equation*}
	  \lim_{\varepsilon \rightarrow 0}  \Theta_\varepsilon  = \infty
	  \quad \text{and}\quad
	  \lim_{\varepsilon \rightarrow 0}
	  \Theta_\varepsilon^{s_1-s_0} \norm { u^\varepsilon-u }_{\dot{H}^{s_0}}
	  =0.
	\end{equation*}
\end{lem}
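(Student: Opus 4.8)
The plan is to split the frequency space at the threshold $\Theta_\varepsilon$ and estimate the low and high frequency parts of $u^\varepsilon - u$ separately in $\dot H^{s_1}$. Write $u^\varepsilon - u = \mathds{1}_{|D|<\Theta_\varepsilon}(u^\varepsilon - u) + \mathds{1}_{|D|\geq \Theta_\varepsilon}(u^\varepsilon - u)$. For the high-frequency part, use the triangle inequality to bound $\norm{\mathds{1}_{|D|\geq\Theta_\varepsilon}(u^\varepsilon-u)}_{\dot H^{s_1}} \leq \norm{\mathds{1}_{|D|\geq\Theta_\varepsilon}u^\varepsilon}_{\dot H^{s_1}} + \norm{\mathds{1}_{|D|\geq\Theta_\varepsilon}u}_{\dot H^{s_1}}$. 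The first term vanishes by hypothesis (for the ``if'' direction); the second vanishes because $u\in\dot H^{s_1}$ (a consequence of the uniform bound on $u^\varepsilon$ passing to a weak limit, together with $\Theta_\varepsilon\to\infty$, so $\mathds{1}_{|D|\geq\Theta_\varepsilon}u\to 0$ in $\dot H^{s_1}$ by dominated convergence on the Fourier side). For the low-frequency part, interpolate or rather directly estimate using the Fourier characterization: on $\{|D|<\Theta_\varepsilon\}$ we have $|\xi|^{s_1} = |\xi|^{s_1-s_0}|\xi|^{s_0} \leq \Theta_\varepsilon^{s_1-s_0}|\xi|^{s_0}$, so that
$$
\norm{\mathds{1}_{|D|<\Theta_\varepsilon}(u^\varepsilon-u)}_{\dot H^{s_1}} \leq \Theta_\varepsilon^{s_1-s_0}\norm{u^\varepsilon-u}_{\dot H^{s_0}},
$$
which tends to $0$ by the second assumed property of $\Theta_\varepsilon$. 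Combining the two bounds gives $u^\varepsilon\to u$ in $\dot H^{s_1}$, proving the ``if'' direction.

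For the converse (``only if''), assume $u^\varepsilon\to u$ in $\dot H^{s_1}$. Then $\norm{\mathds{1}_{|D|\geq\Theta_\varepsilon}u^\varepsilon}_{\dot H^{s_1}}\leq \norm{\mathds{1}_{|D|\geq\Theta_\varepsilon}(u^\varepsilon - u)}_{\dot H^{s_1}} + \norm{\mathds{1}_{|D|\geq\Theta_\varepsilon}u}_{\dot H^{s_1}} \leq \norm{u^\varepsilon-u}_{\dot H^{s_1}} + \norm{\mathds{1}_{|D|\geq\Theta_\varepsilon}u}_{\dot H^{s_1}}$, and both terms on the right vanish as $\varepsilon\to 0$ — the first by the assumed convergence, the second again by dominated convergence on the Fourier side since $u\in\dot H^{s_1}$ and $\Theta_\varepsilon\to\infty$. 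This direction is essentially immediate and requires no real work.

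The only genuine subtlety — more a matter of care than of difficulty — is justifying that the limit $u$ itself belongs to $\dot H^{s_1}$, which is what makes $\norm{\mathds{1}_{|D|\geq\Theta_\varepsilon}u}_{\dot H^{s_1}}\to 0$ legitimate. This follows because $(u^\varepsilon)$ is bounded in the Hilbert space $\dot H^{s_0}\cap\dot H^{s_1}$, hence (up to a subsequence) weakly convergent, and the $\dot H^{s_0}$ limit $u$ must coincide with the weak $\dot H^{s_1}$ limit, so $u\in\dot H^{s_1}$ with $\norm{u}_{\dot H^{s_1}}\leq\liminf\norm{u^\varepsilon}_{\dot H^{s_1}}$. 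Once this is in hand, everything else is the elementary frequency-splitting computation above; I do not anticipate any real obstacle. I would present the ``if'' direction in full and remark that the ``only if'' direction follows by the same splitting read in reverse.
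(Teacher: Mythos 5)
Your proof is correct and follows essentially the same frequency-splitting argument as the paper, which bounds the low-frequency part of $u^\varepsilon - u$ in $\dot H^{s_1}$ by $\Theta_\varepsilon^{s_1-s_0}\norm{u^\varepsilon-u}_{\dot H^{s_0}}$ and dispenses with the tail $\mathds{1}_{|D|\geq\Theta_\varepsilon}u$ using $u\in\dot H^{s_1}$. The only difference is that you explicitly justify $u\in\dot H^{s_1}$ via weak compactness, a point the paper leaves implicit; this is a worthwhile addition to a self-contained writeup.
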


\begin{proof}
The proof of the sufficiency of the criterion straightforwardly follows from the observations that
$$  \norm { \mathds{1}_{|D|\leq \Theta_\varepsilon}(u^\varepsilon-u) }_{\dot{H}^{s_1}} \leq \Theta_\varepsilon^{s_1-s_0} \norm { u^\varepsilon-u }_{\dot{H}^{s_0}}, $$
the fact that the right-hand side above vanishes due to the assumptions on $\Theta_\varepsilon$, and the convergence
$$\lim_{\varepsilon \rightarrow 0} \norm {   \mathds{1}_{|D|\geq \Theta_\varepsilon}u }_{\dot{H}^{s_1}}=0,$$
for any fixed $u\in \dot H^{s_1}$.

On the other hand, the justification of the direct implication in the lemma follows directly by writing   
\begin{equation*}
	\norm {   \mathds{1}_{|D|\geq \Theta_\varepsilon}u^\varepsilon }_{\dot{H}^{s_1}} \leq \norm {      u^\varepsilon-u }_{\dot{H}^{s_1}} + \norm {   \mathds{1}_{|D|\geq \Theta_\varepsilon}u }_{\dot{H}^{s_1}}
\end{equation*}
and noting that the right-hand side vanishes in the limit $\varepsilon\to 0$, which concludes the proof.
\end{proof}

Thus, by virtue of the compactness criterion given in the previous lemma, our work will be reduced to the control of some high frequencies of the vorticities $\omega^\varepsilon\bydef \curl u^\varepsilon$ in $L^\infty_t L^2$. This is crucial and will be achieved by performing an $L^2$-energy estimate on dyadic blocks of $\omega^\varepsilon$ which is compatible with the nonlinear structure of the equations and relies on the remarkable identities established in Lemma \ref{lemma:iden:1}, later on.

\subsection{Notation}

Before moving on to the proofs of our theorems, allow us to clarify some elements of notation that we are about to use.

We denote by $C>0$ any universal constant that is independent of the main variables of the given	 problem. Accordingly, we use the inequality $ A\lesssim B$ when there exists a constant $C>0$ independent of $A$ and $B$ such that $A\leq C B$. In general, the constant $C$ is allowed to change from one line to the next.

Moreover, for any bounded function $ m:\mathbb{R}^2 \rightarrow \mathbb{R}$, the Fourier multiplier operator $m(D)$ is defined by 
$$m(D)  f \bydef \mathcal{F}^{-1}\big(m(\xi)  \mathcal{F}f(\xi)\big),$$
for any tempered distribution $f\in \mathcal{S}'(\mathbb{R}^2)$,
where $\mathcal{F}$ and $\mathcal{F}^{-1}$ denote the Fourier transform and its inverse, respectively.

Finally, the commutator between two operators $Q$ and $S$ is denoted by $[Q,S]$ and is defined by the relation
$$[Q,S]\omega \bydef Q S\omega - SQ\omega,$$
for any suitable $\omega$.

\subsection{Proof of Theorem \ref{thm2}}\label{subsection.THM2}

We proceed in several steps:
\begin{enumerate}
	\item First, in Section \ref{section:recall}, we recall the classical ideas leading to the convergence $u^\varepsilon \to u$ in $L^\infty_tL^2$.
	\item Then, in Section \ref{section:low:freq}, we discuss the convergence of some suitable low frequencies of vorticities with a simple estimate based on the convergence of velocities in $L^\infty_tL^2$.
	\item Finally, in Section \ref{section:high:freq}, in the spirit of the Compactness Extrapolation Lemma (Lemma \ref{lemma:extrapolation}), we identify and control the remaining high frequencies of vorticities, thereby establishing their convergence and completing the proof of Theorem \ref{thm2}.
	\item The remaining sections, i.e., Sections \ref{section:energy:identity} and \ref{section:estimate:J1}, are dedicated to essential technical results which are justified separately, for the sake of clarity.
\end{enumerate}

\subsubsection{Convergence in $L^\infty_tL^2$}\label{section:recall}

The stability of the two-dimensional Euler equations in $L^\infty_tL^2$ is classical and follows from the uniqueness methods employed in \cite{Yudovich1}. We also refer to \cite[Lemma 4]{CDE} for a different proof in the torus which can be adapted to the whole plane. 

More specifically, it is possible to show that if
$$ \widetilde{u}^\varepsilon \bydef u^\varepsilon-u
\quad\text{and}\quad
\widetilde{g}^\varepsilon \bydef g^\varepsilon -g$$
are small in the sense that
\begin{equation*}
	\norm { \widetilde{u}^{\varepsilon } _0  }_{L^2}  +  \norm { \widetilde{g}^{\varepsilon }   }_{L^1([0,T];L^2)}
	< C_*e^{-\exp(C_*T)},
\end{equation*}
where
\begin{equation}\label{C*:def}
C_* \bydef C\sup_{\varepsilon>0} \left( \norm {(\omega_0,\omega^\varepsilon_0)}_{L^2\cap L^\infty}  + \norm {(\curl g,\curl g^\varepsilon)}_{L^1([0,T];L^2\cap L^\infty)}\right)  ,
\end{equation}  
for some universal constant $C>0$, then one has the stability estimate
\begin{equation}\label{u:CV:final0}
	\begin{aligned}
		\norm { \widetilde{u}^{\varepsilon }   }_{L^\infty ([0,T]; L^2)}  \lesssim_{C_*,T}\Big(  \norm { \widetilde{u}^{\varepsilon } _0  }_{L^2}  +  \norm { \widetilde{g}^{\varepsilon }   }_{L^1([0,T];L^2)}   \Big) ^{\exp (-C_*T)}.
	\end{aligned}
\end{equation}

For the sake of completeness, we now outline the idea leading to \eqref{u:CV:final0} by following the modern proof given in \cite[Section 7.3.3]{bcd11}. To that end, we begin by observing that $ \widetilde{u}^\varepsilon$ and $ \widetilde{g}^\varepsilon$
solve the system
\begin{equation*}
	\begin{cases}
		\begin{aligned}
 \partial_t \widetilde{u}^{\varepsilon }+u^{\varepsilon } \cdot \nabla \widetilde{u}^{\varepsilon }  + \nabla \widetilde{ p}^{\varepsilon } &= - \widetilde{u}^{\varepsilon } \cdot \nabla u+   \widetilde{g}^{\varepsilon }, 
			\\
			 \div \widetilde{u}^{\varepsilon }&= 0,\\ 
			   {\widetilde{u}}^{\varepsilon }_{|_{t=0}} &=u _ 0^{\varepsilon }  - u_0.
		\end{aligned}
	\end{cases}
\end{equation*}
Then, performing a standard $L^2$-energy estimate yields, for any $t\in [0,T]$,
\begin{equation*}
	\begin{aligned}
		\frac{1}{2} \frac{d}{dt} \norm { \widetilde{u}^{\varepsilon } (t)  }_{L^2}^2
		\leq  \left|\int_{\mathbb{R}^2}
		\nabla u :(\widetilde{u}^{\varepsilon }\otimes \widetilde{u}^{\varepsilon })(t,x) dx\right|
		+ \norm { \widetilde{g}^{\varepsilon } (t)  }_{L^2} \norm { \widetilde{u}^{\varepsilon } (t)  }_{L^2}.
	\end{aligned}
\end{equation*}
Therefore, by H\"older's inequality and the classical Biot--Savart estimate (see \cite[Section 7.1.1]{bcd11})
\begin{equation*}
 \norm {\nabla u}_{L^q}\leq C q \norm {\omega}_{L^q},
\end{equation*}
 where $q=q(t)\in (2,\infty)$ is allowed to depend on $t\in[0,T]$, we infer that 
\begin{equation*}
\begin{aligned}
\frac{1}{2} \frac{d}{dt} \norm {\widetilde{u}^{\varepsilon }(t)}_{L^2}^2  &\leq \norm {\nabla u(t)}_{L^q}\norm {\widetilde{u}^{\varepsilon }(t)}_{L^{\frac{2q}{q-1}}}^2 + \norm {\widetilde g^{\varepsilon } (t)}_{L^2} \norm {\widetilde{u}^{\varepsilon }(t)}_{L^{2}}
\\
&\leq Cq \norm {\omega (t)}_{L^2 \cap L^\infty}\norm {\widetilde{u}^{\varepsilon }(t)}_{L^{2}}^{2-\frac{2}{q}} \norm {\widetilde{u}^{\varepsilon }(t)}_{L^{\infty}}^{ \frac{2}{q}}+ \norm {\widetilde g^{\varepsilon }(t)}_{L^2} \norm {\widetilde{u}^{\varepsilon }(t)}_{L^{2}}  .
\end{aligned}
\end{equation*}
Thus, by virtue of the control  \eqref{Transport:ES} and the Gagliardo--Nirenberg interpolation inequality 
$$ \norm  {\widetilde u^\varepsilon (t) }_{ L^\infty}   \lesssim
\norm {\widetilde u^\varepsilon(t) }_{L^2} ^{\frac{1}{2}  }\norm {\widetilde \omega^\varepsilon(t) }_{L^\infty}^{\frac{1}{2} }, $$
 we find that
\begin{equation*}
	\begin{aligned}
		\frac 12\norm {\widetilde{u}^{\varepsilon }(t)}_{L^2}^2
		&\leq
		\frac 12\norm {\widetilde{u}^{\varepsilon }_0}_{L^2}^2
		+C\int_0^t q(\tau)
		\norm {\omega (\tau)}_{L^2 \cap L^\infty}
		\norm {\widetilde{u}^{\varepsilon }(\tau)}_{L^{2}}^{2-\frac{1}{q(\tau)}}
		\norm {\widetilde \omega^\varepsilon(\tau) }_{L^\infty}^{\frac{1}{q(\tau)} }
		d\tau
		\\
		&\quad + \int_0^t
		\norm {\widetilde g^{\varepsilon }(\tau)}_{L^2} \norm {\widetilde{u}^{\varepsilon }(\tau)}_{L^{2}}
		d\tau
		\\
		&\leq
		\frac 12\norm {\widetilde{u}^{\varepsilon }_0}_{L^2}^2
		+
		\int_0^t q(\tau)C_*^{1+\frac 1{q(\tau)}}
		\norm {\widetilde{u}^{\varepsilon }(\tau)}_{L^{2}}^{2-\frac{1}{q(\tau)}}
		d\tau
		\\
		&\quad + \left(\int_0^t
		\norm {\widetilde g^{\varepsilon }(\tau)}_{L^2}
		d\tau\right)^2
		+\frac 14 \norm {\widetilde{u}^{\varepsilon }}_{L^\infty([0,t);L^{2})}^2,
	\end{aligned}
\end{equation*}
where $C_*$ is defined by \eqref{C*:def}.

Now, introducing the continuous function
\begin{equation*}
	f^\varepsilon(t)\bydef C_*^{-1}\norm {\widetilde{u}^{\varepsilon }}_{L^\infty([0,t);L^{2})},
\end{equation*}
the preceding estimate can be recast as
\begin{equation*}
		\frac 14\big(f^\varepsilon(t)\big)^2
		\leq
		\frac 12\big(f^\varepsilon(0)\big)^2
		+C_*
		\int_0^t q(\tau)
		\big(f^\varepsilon(\tau)\big)^{2-\frac{1}{q(\tau)}}
		d\tau
		+ \left(C_*^{-1}\int_0^t
		\norm {\widetilde g^{\varepsilon }(\tau)}_{L^2}
		d\tau\right)^2.
\end{equation*}
Therefore, further assuming that
$$f^\varepsilon(t) \leq 1$$
on a time interval $[0,t_*]$, for some $t_*\in(0,T]$,
and setting
\begin{equation*} 
	q(t) \bydef 2 \log \left( \frac{ e}{\big(f^\varepsilon(t)\big)^2} \right),
\end{equation*}
we deduce that
\begin{equation*}
	 \big(f^\varepsilon(t)\big)^2
		\leq
	2\big(f^\varepsilon(0)\big)^2
		+C_0C_*
		\int_0^t \log \left( \frac{ e}{f^\varepsilon(\tau)^2} \right)\big( f^\varepsilon(\tau)\big)^2
		d\tau
		+ 4\left(C_*^{-1}\int_0^t
		\norm {\widetilde g^{\varepsilon }(\tau)}_{L^2}
		d\tau\right)^2,
\end{equation*}
where we employed the observation that
$$q(t)  \big(f^\varepsilon(t)\big)^{2-\frac{1}{q(t)}} \leq C_0   \log \left( \frac{ e}{\big(f^\varepsilon(t)\big)^2} \right)\big( f^\varepsilon(t)\big)^2,$$
for some $C_0\geq 1$.

 At last,  by employing Osgood's lemma   \cite[Lemma 3.4]{bcd11} (see also \cite[Lemma A.1]{HHZ}, which can be applied directly to the present case), we arrive at the desired bound
$$\frac 1{eC_*}\norm { \widetilde{u}^{\varepsilon }   }_{L^\infty ([0,t_*]; L^2)} \leq   \Big(  \frac{\sqrt 2}{eC_*}\norm { \widetilde{u}^{\varepsilon } _0  }_{L^2}  +  \frac 2{eC_*}\norm { \widetilde{g}^{\varepsilon }   }_{L^1([0,t_*];L^2)}    \Big) ^{ \exp (-  C_0C_*t_*)}.$$
In particular, assuming that
\begin{equation*}
	\frac{\sqrt 2}{eC_*}\norm { \widetilde{u}^{\varepsilon } _0  }_{L^2}  +  \frac 2{eC_*}\norm { \widetilde{g}^{\varepsilon }   }_{L^1([0,T];L^2)}
	< e^{-\exp(C_0C_*T)}\leq 1
\end{equation*}
leads to the estimate
$$\frac 1{eC_*}\norm { \widetilde{u}^{\varepsilon }   }_{L^\infty ([0,t_*]; L^2)} \leq   \Big(  \frac{\sqrt 2}{eC_*}\norm { \widetilde{u}^{\varepsilon } _0  }_{L^2}  +  \frac 2{eC_*}\norm { \widetilde{g}^{\varepsilon }   }_{L^1([0,T];L^2)}    \Big) ^{ \exp (-  C_0C_*T)}<e^{-1}.$$
Then, a classical continuation argument allows us to deduce that $t_*$, in the left-hand side above, can be chosen to be equal to $T$. 
This establishes the stability estimate \eqref{u:CV:final0}, by possibly redefining $C_*$ up to a multiplicative universal constant.

\subsubsection{Convergence of low frequencies in $L^\infty_t\dot{H}^1$}
\label{section:low:freq}

As in the proof of Lemma \ref{lemma:extrapolation}, it is readily seen that a direct use of Plancherel's theorem gives that
\begin{equation*}
	\norm { \mathds{1}_{|D|\leq \Theta_\varepsilon}(\omega^\varepsilon - \omega )  }_{L^\infty([0,T]; L^2)} \leq  \Theta_\varepsilon  \norm {u^\varepsilon -u }_{L^\infty([0,T]; L^2)} , 
\end{equation*}
for any $\Theta_\varepsilon>0$. In particular, if
\begin{equation}\label{theta:DEF0}
  \lim_{\varepsilon \rightarrow 0}  \Theta_\varepsilon  = \infty
  \quad \text{and}\quad
  \lim_{\varepsilon \rightarrow 0} \Theta_\varepsilon  \norm {u^\varepsilon -u }_{L^\infty([0,T]; L^2)}=0,
\end{equation}
we deduce that
\begin{equation}\label{CV:LF000}
\lim_{\varepsilon \rightarrow 0}  \norm { \phi(  \Theta_\varepsilon^{-1} D) (\omega^\varepsilon - \omega )  }_{L^\infty([0,T]; L^2)} =0,
\end{equation} 
for any compactly supported cutoff function $\phi\in L^\infty(\mathbb{R}^2)$.

Moreover, if $\phi$ is assumed to be supported away from the origin, then the assumptions  on the behavior of $ \Theta_\varepsilon   $ allow us to obtain that
  \begin{equation}\label{Contin.w2}
   \lim_{\varepsilon \rightarrow 0}  \norm { \phi(  \Theta_\varepsilon^{-1} D)  \omega  }_{L^\infty([0,T]; L^2)} =0.
\end{equation}
This convergence hinges  upon the  time continuity of Yudovich's solutions. Indeed, let us suppose, by contradiction, that
\begin{equation*}
	\norm { \phi(  \Theta_{\varepsilon_k}^{-1} D)  \omega  }_{L^\infty([0,T]; L^2)}\geq\delta,
\end{equation*}
for some sequence $\varepsilon_k\to 0$, as $k\to\infty$, and some constant $\delta>0$. Then, by continuity, writing
$$\norm { \phi(  \Theta_{\varepsilon_k}^{-1} D)  \omega  }_{L^\infty([0,T]; L^2)} =\norm {\phi(  \Theta_{\varepsilon_k}^{-1} D)  \omega(t_k)}_{L^2},$$
for some suitable $t_k \in [0,T]$, and assuming, by compactness, that $t_k\to t$, we see that the control
$$\norm {\phi(  \Theta_{\varepsilon_k}^{-1} D)  \omega(t_k)}_{L^2}  \leq \norm{\phi}_{L^\infty}  \norm  { \omega(t_k) -\omega(t )}_{L^2}
+ \norm {\phi(  \Theta_{\varepsilon_k}^{-1} D)  \omega(t )}_{L^2}$$
implies that
\begin{equation*}
	\norm { \phi(  \Theta_{\varepsilon_k}^{-1} D)  \omega  }_{L^\infty([0,T]; L^2)}\to 0,
\end{equation*}
which is impossible. It follows that \eqref{Contin.w2} holds true.

We will make use of the preceding convergence properties with the particular choice of cutoff function $\phi(D)=\mathds{1}_{M \leq |D|\leq N }$, for any $0<M<N$, which yields
\begin{equation*}
 \lim_{\varepsilon\rightarrow 0} \norm { \mathds{1}_{M  \Theta_\varepsilon \leq |D|\leq N \Theta_\varepsilon  }\omega    }_{L^\infty([0,T]; L^2)}=0, 
\end{equation*} 
and
\begin{equation}\label{compactness1}
\begin{aligned}
 \lim_{\varepsilon\rightarrow 0} \norm { \mathds{1}_{M  \Theta_\varepsilon \leq |D|\leq N \Theta_\varepsilon }\omega^\varepsilon   }_{L^\infty([0,T]; L^2)}   = 0 ,
\end{aligned}
\end{equation} 
for any choice of parameters $\Theta_\varepsilon$ satisfying \eqref{theta:DEF0}.

\subsubsection{Convergence of high frequencies in $L^\infty_t\dot{H}^1$}
\label{section:high:freq}

We first introduce some notation. Let 
$$\mathcal{C} \bydef   \left \{\xi \in \mathbb{R}^2 :  \frac{3}{4}\leq |\xi | \leq \frac{8}{3}\right \} , \qquad \mathcal{B} \bydef   \left \{\xi \in \mathbb{R}^2 :  |\xi | \leq \frac{4}{3}\right \}, $$ 
and   consider    smooth radial functions $\varphi \in \mathcal{  D}( \mathcal{C})$ and $ \psi \in \mathcal{  D}( \mathcal{B})  $ with
$$0 \leq  \varphi (\xi),  \psi (\xi) \leq 1, \quad  \text{for all }  \xi \in \mathbb{R}^2,$$
and
$$\psi(\xi)=1, \quad  \text{for all }|\xi|\leq 1.$$
Furthermore, it is possible to ensure that the family of functions
$$ \varphi_j(\cdot) \bydef \varphi  (2^{-j} \cdot )\in \mathcal{  D}(2^j\mathcal{C}), \quad  j\in \mathbb{Z},$$
 provides us with partitions of unity
$$1 = \sum_{ j\in \mathbb{Z}} \varphi_j(\xi) , \quad \text{for all } \xi\in \mathbb{R}^2\setminus \{0\}$$
and
$$1 =   \sum_{ j\geq 0} \varphi_j(\xi) + \psi(\xi), \quad \text{for all } \xi\in \mathbb{R}^2.$$
In this case, the corresponding convolution operators  
\begin{equation}\label{S0:def}
S_0^\varepsilon   \bydef \psi(\Theta_\varepsilon^{-1}D)    ,\qquad \Delta_j^\varepsilon \bydef \varphi_j( \Theta_\varepsilon^{-1} D),
\end{equation}
where $\Theta_\varepsilon>0$ is any parameter satisfying \eqref{theta:DEF0}, satisfy that
$$ S_0^\varepsilon+  \sum_{j\geq 0}\Delta_j^\varepsilon= \mathrm{Id}.$$

Next, further introducing the Fourier multiplier operators
$$ \sqrt{S_0^\varepsilon} \bydef \sqrt{\psi   (\Theta_\varepsilon^{-1} D)}
\qquad\text{and}\qquad
\sqrt{\mathrm{Id}-S_0^\varepsilon}\bydef \sqrt{1-\psi  (\Theta_\varepsilon^{-1}D)} ,$$
and observing that
\begin{equation*}
	\left( \sum_{j\geq 0} \varphi_j^\varepsilon(\xi) \right)^2
	\leq \sum_{j\geq 0} \varphi_j^\varepsilon(\xi),
\end{equation*}
it holds, for any given $f\in L^2$, that
\begin{equation}\label{M1}
		\norm{ (\mathrm{Id}-S_0^\varepsilon )f  }_{L^2}
		\leq
		\norm{ \sqrt{\mathrm{Id}-S_0^\varepsilon }f}_{L^2},
\end{equation}
which will come in handy later on.

Now, observe that \eqref{Contin.w2} yields 
$$ \lim_{\varepsilon\rightarrow 0}\norm { (\mathrm{Id} -S_0^\varepsilon) \omega   }_{L^\infty( [0,T]; L^2)}=0 .  $$
Therefore, by \eqref{CV:LF000}, we conclude that the convergence
$$ \lim_{\varepsilon\rightarrow 0} \norm {  \omega^ \varepsilon- \omega }_{L^\infty( [0,T]; L^2)}=0 $$
is equivalent to
\begin{equation}\label{vanishing:1}
	\lim_{\varepsilon\rightarrow 0}\norm { (\mathrm{Id} -S_0^\varepsilon) \omega^\varepsilon   }_{L^\infty( [0,T]; L^2)}=0 ,
\end{equation}
which can be interpreted as a time-dependent version of the compactness criterion given in Lemma \ref{lemma:extrapolation}.

Thus, in order to prove Theorem \ref{thm2}, there only remains to establish \eqref{vanishing:1}. To that end, we first recall that $\omega^\varepsilon$ solves the transport equation
\begin{equation}\label{omega-equa}
 \partial_t \omega^\varepsilon + u^\varepsilon \cdot \nabla \omega^\varepsilon = \curl g^\varepsilon.
\end{equation}
Then, formally taking the  inner product of  \eqref{omega-equa}   with $  \sum_{j\geq 0}\Delta_j^\varepsilon \omega^\varepsilon $ and using the divergence-free condition of $u^\varepsilon$, we find that
\begin{equation}\label{energy-HF}
\begin{aligned}
 \frac{1}{2 }\norm{  \sqrt{\mathrm{Id} - S_0^\varepsilon} \omega^\varepsilon (t )   }_{L^2}^2 &= \frac{1}{2 }\norm{  \sqrt{\mathrm{Id} - S_0^\varepsilon} \omega^\varepsilon_0 }_{L^2}^2+ \int_0^t  \int_{\mathbb{R}^2}  \curl g^\varepsilon  (\mathrm{Id} - S_0^\varepsilon) \omega^\varepsilon(\tau,x)  dxd\tau\\
& \quad -\underbrace{ \int_0^t  \int_{\mathbb{R}^2}u^\varepsilon\cdot \nabla \left( \sum_{j\leq -1}\Delta_j^\varepsilon \omega^\varepsilon  \right)  \sum_{j\geq 0}\Delta_j^\varepsilon\omega^\varepsilon(\tau,x)     dxd\tau }_{\bydef\mathcal{J}(t)} .
\end{aligned}
\end{equation}
For the sake of completeness, we provide a rigorous justification of \eqref{energy-HF} in Section \ref{section:energy:identity}, below.

We show now how to estimate $\mathcal{J}(t)$. To that end, we split it into a sum of three terms
\begin{equation}\label{J:decomposition}
	\begin{aligned}
		\mathcal{J}_1(t) &\bydef \int_0^t \int_{\mathbb{R}^2}      u^\varepsilon  \cdot \nabla \left(\Delta_{-1} ^\varepsilon \omega^\varepsilon(\tau)  \right) \Delta_0^\varepsilon\omega^\varepsilon(\tau,x)   dx d\tau ,
		\\
		\mathcal{J}_2(t) &\bydef   \int_0^t  \int_{\mathbb{R}^2}     u^\varepsilon   \cdot \nabla \left(  \sum_{j\leq -2}\Delta_j^\varepsilon \omega^\varepsilon \right) \sum_{j\geq 0}\Delta_j^\varepsilon\omega^\varepsilon(\tau,x)     dxd\tau ,
		\\
		\mathcal{J}_3(t) &\bydef   \int_0^t  \int_{\mathbb{R}^2}     u^\varepsilon  \cdot \nabla \left(  \Delta_{-1}^\varepsilon \omega^\varepsilon  \right) \sum_{j\geq 1}\Delta_j^\varepsilon\omega^\varepsilon(\tau,x)   dxd\tau ,
	\end{aligned}
\end{equation}
which we control separately.

The term $\mathcal{J}_1 $ is the most difficult to estimate. Specifically, one can show that
\begin{equation}\label{estimate:J1}
	|\mathcal{J}_1(t)| \lesssim \sum_{i=-1}^0 \int_0^t\norm {\nabla u^\varepsilon(\tau)}_{L^2}  \norm {\Delta_i^\varepsilon  \omega^\varepsilon(\tau)}_{L^4}^2 d\tau.
\end{equation}
For the sake of clarity, we defer the justification of this estimate to Section \ref{section:estimate:J1}, below.

Then, observe that
\begin{equation*}
	|\mathcal{J}_1(t)|
	\lesssim  \norm {\omega^\varepsilon }_{L^\infty([0,t]; L^2 )}
	\norm {\omega^\varepsilon }_{L^\infty([0,t]; L^\infty )}
	\sum_{i=-1}^0  \norm {\Delta_i^\varepsilon  \omega^\varepsilon  } _{L^1([0,t];L^2)},
\end{equation*}
which follows from H\"older's inequality and the Biot--Savart estimate.

As for $  \mathcal{J}_2$ and $\mathcal{J}_3$, we begin with exploiting the support localizations
\begin{equation*}
	\begin{aligned}
		\supp \mathcal{F}\left(  \sum_{j\leq -2}\Delta_j^\varepsilon \omega^\varepsilon  \right)
		&= \left\{ \xi\in \mathbb{R}^2: |\xi| \leq \frac{2\Theta_\varepsilon}{3} \right\}, 
		\\
		\supp \mathcal{F}\left(  \sum_{j\geq 0}\Delta_j^\varepsilon\omega^\varepsilon   \right)
		&= \left\{ \xi\in \mathbb{R}^2: |\xi| \geq  \frac{3\Theta_\varepsilon}{4} \right\}  , 
	\end{aligned}
\end{equation*}
to deduce that  
 $$ \supp \mathcal{F}\left(  \sum_{j\leq -2}\Delta_j^\varepsilon \omega^\varepsilon  \sum_{j\geq 0}\Delta_j^\varepsilon\omega^\varepsilon   \right) \subset  \left\{ \xi\in \mathbb{R}^2: |\xi| \geq  \frac{\Theta_\varepsilon}{12} \right\}.$$
Similarly,  the fact that
\begin{equation*}
	\begin{aligned}
		\supp \mathcal{F}\left(   \Delta_{-1}^\varepsilon \omega^\varepsilon  \right) &\subset \left\{ \xi\in \mathbb{R}^2: |\xi| \leq \frac{4\Theta_\varepsilon}{3} \right\},
		\\
		\supp \mathcal{F}\left(  \sum_{j\geq 1}\Delta_j^\varepsilon\omega^\varepsilon   \right)&\subset \left\{ \xi\in \mathbb{R}^2: |\xi| \geq  \frac{3\Theta_\varepsilon}{2} \right\}  , 
	\end{aligned}
\end{equation*}
entails that
 $$  \supp\mathcal{F}\left(  \Delta_{-1}^\varepsilon \omega^\varepsilon  \sum_{j\geq 1}\Delta_j^\varepsilon\omega^\varepsilon   \right) \subset  \left\{ \xi\in \mathbb{R}^2: |\xi| \geq  \frac{ \Theta_\varepsilon}{6} \right\}.$$
Accordingly, we infer that 
 $$\begin{aligned}
 \mathcal{J}_2(t) 
 &=   \int_0^t  \int_{\mathbb{R}^2}    \left( \mathds{1}_{|D|\geq  \frac{1}{12}\Theta_\varepsilon}u^\varepsilon \right)  \cdot \nabla \left(  \sum_{j\leq -2}\Delta_j^\varepsilon \omega^\varepsilon   \right) \sum_{j\geq 0}\Delta_j^\varepsilon\omega^\varepsilon(\tau,x)  dxd\tau ,
 \end{aligned}$$
 and 
  $$\begin{aligned}
 \mathcal{J}_3(t) 
 &=   \int_0^t  \int_{\mathbb{R}^2}    \left( \mathds{1}_{|D|\geq  \frac{1}{6}\Theta_\varepsilon} u^\varepsilon \right)  \cdot \nabla \left(  \Delta_{-1}^\varepsilon \omega^\varepsilon  \right) \sum_{j\geq 1}\Delta_j^\varepsilon\omega^\varepsilon(\tau,x)     dxd\tau.
 \end{aligned}$$

 Consequently,  since $u^\varepsilon$ above is localized in its high frequencies, it holds that
\begin{equation}\label{J2:ES1}
\begin{aligned}
 |\mathcal{J}_2(t)| &\lesssim   \int_0^t\norm { \mathds{1}_{|D|\geq \frac{1}{12}\Theta_\varepsilon}\nabla u^\varepsilon(\tau) }_{L^2}  \norm {  ( S_0^\varepsilon -\Delta_{-1}^\varepsilon)\omega^\varepsilon(\tau)}_{L^\infty}  \norm { (\mathrm{Id}-S_0^\varepsilon  )\omega^\varepsilon(\tau)}_{L^2}  d\tau\\
  &\lesssim  \int_0^t\norm {\mathds{1}_{|D|\geq  \frac{1}{12}\Theta_\varepsilon}\omega^\varepsilon(\tau)  }_{L^2}  \norm {   \omega^\varepsilon(\tau)}_{L^\infty}  \norm { (\mathrm{Id}-S_0^\varepsilon  )\omega^\varepsilon(\tau)}_{L^2}  d\tau ,
 \end{aligned}
\end{equation}  
 and, in a similar fashion, that
\begin{equation}\label{J2:ES2}
\begin{aligned}
 |\mathcal{J}_3(t)| &\lesssim   \int_0^t\norm {\mathds{1}_{|D|\geq \frac{1}{6}\Theta_\varepsilon}\nabla u^\varepsilon(\tau)  }_{L^2}  \norm {    \Delta_{-1}^\varepsilon   \omega^\varepsilon(\tau)}_{L^\infty}  \norm { (\mathrm{Id}-S_0^\varepsilon -\Delta_0^\varepsilon)\omega^\varepsilon(\tau)}_{L^2}  d\tau\\
  &\lesssim  \int_0^t\norm { \mathds{1}_{|D|\geq  \frac{1}{12}\Theta_\varepsilon}\omega^\varepsilon(\tau) }_{L^2}  \norm {   \omega^\varepsilon(\tau)}_{L^\infty} \Big( \norm {   \Delta_0^\varepsilon \omega^\varepsilon(\tau)}_{L^2}  + \norm { (\mathrm{Id}-S_0^\varepsilon )\omega^\varepsilon(\tau)}_{L^2} \Big) d\tau .
 \end{aligned}
\end{equation}  
 Therefore, the bounds
 \begin{equation*}
 \norm {   \Delta_0^\varepsilon \omega^\varepsilon(\tau)}_{L^2}  \leq   \norm { \mathds{1}_{\frac{1}{12}\Theta_\varepsilon \leq |D|\leq \frac{8	}{3} \Theta_\varepsilon}\omega^\varepsilon(\tau) }_{L^2}
\end{equation*}     
and  
  $$\begin{aligned}
  \norm { \mathds{1}_{|D|\geq \frac{1}{12}\Theta_\varepsilon}\omega^\varepsilon(\tau) }_{L^2} ^2  &=  \norm { \mathds{1}_{\frac{1}{12} \Theta_\varepsilon \leq |D|< \frac{4	}{3} \Theta_\varepsilon}\omega^\varepsilon(\tau) }_{L^2}^2 + \norm {\mathds{1}_{|D|\geq \frac{4}{3}\Theta_\varepsilon}\omega^\varepsilon(\tau)  }_{L^2}^2  \\
  &\leq   \norm {\mathds{1}_{\frac{1}{12} \Theta_\varepsilon\leq |D|\leq \frac{8	}{3} \Theta_\varepsilon}\omega^\varepsilon(\tau)  }_{L^2}^2+  \norm { (\mathrm{Id}- S_0^\varepsilon)\omega^\varepsilon(\tau)  }_{L^2} ^2,
 \end{aligned}$$ 
lead to
  $$\begin{aligned}
 |\mathcal{J}_2(t)| +  |\mathcal{J}_3(t)|
  &\lesssim    \norm {\omega^\varepsilon }_{L^\infty ([0,t];L^2\cap L^\infty )}^2  \norm {  \mathds{1}_{\frac{1}{12}\Theta_\varepsilon \leq |D|\leq \frac{8	}{3}  \Theta_\varepsilon}\omega^\varepsilon  }_{L^1 ([0,t];L^2)}
  \\
  &\quad+ \int _0^t \norm {\omega^\varepsilon(\tau)}_{L^\infty} \norm { (\mathrm{Id}- S_0^\varepsilon)\omega^\varepsilon(\tau)  }_{L^2} ^2 d\tau.
 \end{aligned}$$

Finally, gathering all estimates on $\mathcal{J}_1$, $\mathcal{J}_2$ and $\mathcal{J}_3$, employing the control 
$$ \norm {\omega^\varepsilon}_{L^\infty ([0,T]; L^2\cap L^\infty)} \leq \sup_{\varepsilon>0} \left(\norm {\omega^\varepsilon_0}_{L^2\cap L^\infty} + \norm {\curl g ^\varepsilon}_{L^1([0,T]; L^2\cap L^\infty)} \right)\lesssim  C_*,$$
where $C_*$ is defined in \eqref{C*:def}, and using \eqref{M1},
we arrive at the bound
\begin{equation*}
	|\mathcal{J}(t)|
	\lesssim
	C_*^2 \norm {  \mathds{1}_{\frac{1}{12}\Theta_\varepsilon \leq |D|\leq \frac{8	}{3} \Theta_\varepsilon}\omega^\varepsilon  }_{L^1 ([0,T];L^2)} +C_* \int _0^t   \norm { \sqrt{\mathrm{Id}-S_0^\varepsilon }\omega^\varepsilon(\tau)  }_{L^2} ^2 d\tau,
\end{equation*}
for any $t\in [0,T]$.

Further incorporating this estimate into \eqref{energy-HF}, we deduce that
\begin{equation*} 
	\begin{aligned}
		\norm{ \sqrt{\mathrm{Id}-S_0^\varepsilon } \omega^\varepsilon (t )   }_{L^2}^2
		&\leq
		\norm{ \sqrt{\mathrm{Id}-S_0^\varepsilon } \omega^\varepsilon_0 }_{L^2}^2+ C_*\norm  { (\mathrm{Id} - S_0^\varepsilon) \curl g ^\varepsilon}_{L^1 ([0,T];L^2)}
		\\
		& \quad +C_*^2 \norm {  \mathds{1}_{\frac{1}{12}\Theta_\varepsilon \leq |D|
		\leq \frac{8	}{3} \Theta_\varepsilon}\omega^\varepsilon  }_{L^1 ([0,T];L^2)}
		+C_* \int _0^t
		\norm { \sqrt{\mathrm{Id}-S_0^\varepsilon }\omega^\varepsilon(\tau)  }_{L^2} ^2 d\tau.
	\end{aligned}
\end{equation*}
At last, applying Gr\"onwall's lemma  
yields that
\begin{equation*}
\begin{aligned}
\norm{ \sqrt{\mathrm{Id}-S_0^\varepsilon } \omega^\varepsilon   }_{L^\infty([0,T];  L^2)}^2
&\lesssim_{C_*}
\bigg(\norm{ \sqrt{\mathrm{Id}-S_0^\varepsilon } \omega^\varepsilon_0 }_{L^2}^2
+  \norm { \mathds{1}_{\frac{1}{12}\Theta_\varepsilon \leq |D|\leq \frac{8	}{3} \Theta_\varepsilon}\omega^\varepsilon  }_{L^1 ([0,T]; L^2)}
\\
&\qquad+   \norm  { (\mathrm{Id} - S_0^\varepsilon) \curl
g^\varepsilon}_{L^1 ([0,T]; L^2)} \bigg)
\exp\left( C_*  T\right).
\end{aligned}
\end{equation*}
Clearly, in view of \eqref{compactness1}, the right-hand side above vanishes in the limit $\varepsilon\rightarrow 0$. We therefore conclude that \eqref{vanishing:1} holds true, thereby completing the proof of Theorem \ref{thm2}. \qed

\subsubsection{Justification of \eqref{energy-HF}}\label{section:energy:identity}

Note that, in order to establish \eqref{energy-HF}, we have taken advantage of the formal cancellation
 $$ \int_{\mathbb{R}^2} u^\varepsilon(\tau,x)\cdot \nabla \left(\sum_{j\geq 0}\Delta_j^\varepsilon \omega^\varepsilon(\tau,x) \right) \left(\sum_{j\geq 0}\Delta_j^\varepsilon \omega^\varepsilon(\tau,x) \right)dx =0, \quad \text{for all } \tau \in [0,T],$$
despite the fact that the   integral above is not well-defined, for $\omega^\varepsilon(t)$   only belongs to Lebesgue spaces. Here, we show  that \eqref{energy-HF} can be justified without relying on the above identity.

To that end, we first observe that
$$ \int_{\mathbb{R}^2} \div(u^\varepsilon  \omega^\varepsilon ) S_0^\varepsilon\omega^ \varepsilon (\tau,x) dx = -\int_{\mathbb{R}^2} u^\varepsilon\cdot \nabla (S_0^\varepsilon\omega^\varepsilon )(\mathrm{Id}-S_0^\varepsilon) \omega^\varepsilon (\tau,x) dx,$$
for any $\tau \in [0,T]$. Accordingly, by taking the inner product of the transport equation \eqref{omega-equa} with $S_0^\varepsilon \omega^\varepsilon$, we obtain, for any $t\in [0,T]$, that
\begin{equation*} 
\begin{aligned}
\frac 12 \norm {  \sqrt{S_0^\varepsilon}  \omega^\varepsilon(t)  }_{L^2}^2  
& = \frac 12 \norm {\sqrt{S_0^\varepsilon}\omega^\varepsilon_0  }_{L^2}^2  + \int_0^t\int_{\mathbb{R}^2}   \curl g^\varepsilon     S_0^\varepsilon\omega^\varepsilon (\tau,x)   dxd\tau \\
& \quad +\int_0^t  \int_{\mathbb{R}^2} u^\varepsilon\cdot \nabla (S_0^\varepsilon\omega^\varepsilon )(\mathrm{Id}-S_0^\varepsilon) \omega^\varepsilon (\tau,x)  dxd\tau  .
\end{aligned}
\end{equation*} 
 On the other hand, we know, for any $t\in [0,T]$, that
 \begin{equation*} 
\begin{aligned}
\frac 12\norm {   \omega^\varepsilon(t)  }_{L^2}^2  
& =  \frac 12 \norm {\omega^\varepsilon_0  }_{L^2}^2  + \int_0^t\int_{\mathbb{R}^2}  \curl g^\varepsilon     \omega^\varepsilon  (\tau,x)    dxd\tau .
\end{aligned}
\end{equation*}
 Consequently, we see that \eqref{energy-HF} follows from the combination of the two preceding identities with
$$\norm{ \omega^\varepsilon(t)}_{L^2}^2 = \norm{\sqrt{S_0^\varepsilon} \omega^\varepsilon(t) }_{L^2}^2 + \norm{ \sqrt{(\mathrm{Id}-S_0^\varepsilon)}\omega^\varepsilon(t)  }_{L^2}^2,$$
which completes its justification.\qed

\subsubsection{Justification of \eqref{estimate:J1}}\label{section:estimate:J1}

Here, we give a complete proof of estimate \eqref{estimate:J1} on $\mathcal{J}_1$, which follows directly from an application of Lemma \ref{T3-ES}, below. We proceed in several steps. First, in Lemma \ref{standard:lemma}, we summarize the fundamental properties of the partition of unity introduced in the preceding steps. Then, in Lemma \ref{lemma:iden:1}, we establish a crucial identity which is a natural consequence of the localization of frequencies in the nonlinear advection term $u\cdot \nabla \omega$. Finally, we combine the results of Lemmas \ref{standard:lemma} and \ref{lemma:iden:1} to deduce Lemma \ref{T3-ES}.

\begin{lem}\label{standard:lemma}
	The dyadic frequency decomposition operators given in \eqref{S0:def} satisfy the identities
	\begin{gather}
		\nonumber
		\Delta_i^\varepsilon\Delta_{j}^\varepsilon=0,
		\\
		\label{identity:2}
		\Delta_i ^\varepsilon=   \Delta_i^\varepsilon   \sum_{k  \in \{i,i\pm 1 \}} \Delta_{k}^\varepsilon ,
		\\
		\label{identity:3}
		\left( \Delta_i^\varepsilon  +  \Delta_{i+1}^\varepsilon \right)  \Delta_i^\varepsilon \Delta_{i+1}^\varepsilon  =  \Delta_i ^\varepsilon \Delta_{i+1}^\varepsilon,
	\end{gather}
	for any $i,j\in\mathbb{Z}$, with $|i-j|\geq 2$.
	
	Furthermore, we have that
	\begin{equation}\label{support:1}
		\supp { \mathcal{F}\left( \Delta_i^\varepsilon f
		\Delta_{i+1}^\varepsilon
		\Delta_{i+2}^\varepsilon g \right)}
		\subset \left \{  |\xi| \geq \frac{\Theta_\varepsilon 2^i }{3}   \right\} ,
	\end{equation}
	for any tempered distributions $f$ and $g$.
\end{lem}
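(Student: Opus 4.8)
The plan is to establish Lemma \ref{standard:lemma} purely from the support properties of the dyadic cutoffs $\varphi_j(\Theta_\varepsilon^{-1}\cdot)$ and $\psi(\Theta_\varepsilon^{-1}\cdot)$, exactly as in the classical Littlewood--Paley theory but adapted to the dilation by $\Theta_\varepsilon$. The key structural fact is that $\varphi_j^\varepsilon(\xi)=\varphi(2^{-j}\Theta_\varepsilon^{-1}\xi)$ is supported in the annulus $2^j\Theta_\varepsilon\,\mathcal{C}=\{\frac34 2^j\Theta_\varepsilon\leq|\xi|\leq\frac83 2^j\Theta_\varepsilon\}$, so that the supports of $\varphi_i^\varepsilon$ and $\varphi_j^\varepsilon$ are disjoint whenever $|i-j|\geq 2$: indeed $\frac83 2^i\Theta_\varepsilon<\frac34 2^{i+2}\Theta_\varepsilon$ since $\frac83<3$. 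This immediately gives the first identity $\Delta_i^\varepsilon\Delta_j^\varepsilon=0$ for $|i-j|\geq2$, because $\mathcal{F}(\Delta_i^\varepsilon\Delta_j^\varepsilon f)=\varphi_i^\varepsilon\varphi_j^\varepsilon\hat f=0$.

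Next I would derive \eqref{identity:2}. Since $\sum_{k\in\mathbb{Z}}\varphi_k^\varepsilon\equiv 1$ on $\mathbb{R}^2$, we have $\Delta_i^\varepsilon=\Delta_i^\varepsilon\sum_{k\in\mathbb{Z}}\Delta_k^\varepsilon$; but on the support of $\varphi_i^\varepsilon$ all terms $\varphi_k^\varepsilon$ with $|k-i|\geq 2$ vanish, so only $k\in\{i-1,i,i+1\}$ survive, giving \eqref{identity:2}. (One should be slightly careful that the low-frequency piece $\psi(\Theta_\varepsilon^{-1}D)$ is really $\sum_{k\leq -1}\Delta_k^\varepsilon$ in this convention, or rather that the partition of unity $1=\sum_{j\geq 0}\varphi_j+\psi$ is used; but when $i\geq 1$ the cutoff $\psi(\Theta_\varepsilon^{-1}\cdot)$ also vanishes on $\supp\varphi_i^\varepsilon$, so the same conclusion holds, and for the borderline values of $i$ one uses the full partition $\sum_{k\in\mathbb{Z}}\varphi_k$.) Identity \eqref{identity:3} follows from \eqref{identity:2} applied with consecutive indices: writing $P\bydef\Delta_i^\varepsilon+\Delta_{i+1}^\varepsilon$ and using that $\Delta_k^\varepsilon\Delta_i^\varepsilon\Delta_{i+1}^\varepsilon=0$ for $k\notin\{i-1,i,i+1,i+2\}$ together with \eqref{identity:2}, one checks that $P\Delta_i^\varepsilon\Delta_{i+1}^\varepsilon=\Delta_i^\varepsilon\Delta_{i+1}^\varepsilon$; concretely, $\mathcal{F}(\Delta_i^\varepsilon\Delta_{i+1}^\varepsilon f)$ is supported where both $\varphi_i^\varepsilon$ and $\varphi_{i+1}^\varepsilon$ are nonzero, i.e.\ in $\{\frac34 2^{i+1}\Theta_\varepsilon\leq|\xi|\leq\frac83 2^i\Theta_\varepsilon\}$, and on that set $\varphi_i^\varepsilon+\varphi_{i+1}^\varepsilon=1$ because it is disjoint from the supports of all other $\varphi_k^\varepsilon$ and of $\psi(\Theta_\varepsilon^{-1}\cdot)$.

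Finally, for the support statement \eqref{support:1} I would track frequencies through the product. The factor $\Delta_i^\varepsilon f$ has frequencies in $\{|\xi|\leq\frac83 2^i\Theta_\varepsilon\}$, while $\Delta_{i+1}^\varepsilon\Delta_{i+2}^\varepsilon g$ has frequencies in the intersection $\supp\varphi_{i+1}^\varepsilon\cap\supp\varphi_{i+2}^\varepsilon\subset\{|\xi|\geq\frac34 2^{i+2}\Theta_\varepsilon=3\cdot 2^i\Theta_\varepsilon\}$. Since the Fourier support of a product is contained in the sumset of the Fourier supports, the product has frequencies in $\{|\xi|\geq 3\cdot 2^i\Theta_\varepsilon-\frac83 2^i\Theta_\varepsilon\}=\{|\xi|\geq\frac13 2^i\Theta_\varepsilon\}$, which is precisely \eqref{support:1}. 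The main obstacle here is not conceptual but bookkeeping: one must pin down the exact numerical constants $\frac34,\frac83$ defining $\mathcal{C}$ and verify that they produce the stated radii (in particular that $\frac83<3$ so that annuli two apart are disjoint, and that the lower bound $3\cdot 2^i-\frac83\cdot 2^i=\frac13\cdot 2^i$ comes out right); everything else is a routine transcription of standard Littlewood--Paley identities to the $\Theta_\varepsilon$-rescaled setting.
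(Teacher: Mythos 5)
Your proof is correct and follows essentially the same route as the paper: the first three identities are read off the non-overlap of $\supp\varphi_i^\varepsilon$ and $\supp\varphi_j^\varepsilon$ for $|i-j|\geq 2$ (the paper simply cites the standard Littlewood--Paley reference for these), and \eqref{support:1} is obtained, exactly as in the paper, by computing $\supp(\varphi_{i+1}^\varepsilon\varphi_{i+2}^\varepsilon)\subset\{|\xi|\geq 3\cdot 2^i\Theta_\varepsilon\}$ and then using that the Fourier support of a product lies in the Minkowski sum of the Fourier supports, yielding the lower bound $3\cdot 2^i-\tfrac83\cdot 2^i=\tfrac13\cdot 2^i$.
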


\begin{proof}
	This is a consequence of the localization of the support of the function $\varphi_j $, which defines the dyadic block $\Delta_j^\varepsilon$. We refer to \cite[Proposition 2.10]{bcd11} for the proof of the first three identities.
	
	As for \eqref{support:1}, it follows from the fact that
 $$ \supp \left ( \varphi_{i+1}^\varepsilon   \varphi_{i+2}^\varepsilon\right) \subset  \left \{\xi \in \mathbb{R}^2 :  3\cdot 2^i\leq \frac{|\xi |}{\Theta_\varepsilon} \leq \frac{8}{3} \cdot 2^{i+1}\right \} ,$$
 which implies that
 $$ \supp \varphi_i^\varepsilon +  \supp \left ( \varphi_{i+1}^\varepsilon   \varphi_{i+2}^\varepsilon\right) \subset  \left \{\xi \in \mathbb{R}^2 :   \frac{|\xi |}{\Theta_\varepsilon}\geq  \frac{ 2^{i}}{3}   \right \},$$ 
	thereby completing the proof.
\end{proof}

\begin{lem}\label{lemma:iden:1}
	Let $u$ be a divergence-free vector field in $   L^2(\mathbb{R}^2) $ and $\omega$  be a real-valued function in $ L^2(\mathbb{R}^2)$.
	Then, for any $j\in \mathbb{Z}$,   it holds that 
 $$ \begin{aligned}
 \int_{\mathbb{R}^2} u  \cdot \nabla \Delta_{j}^\varepsilon \omega \Delta_{j+1}^\varepsilon \omega  dx   =     \mathcal{I}_1 + \mathcal{I}_2,
\end{aligned} 
 $$
 where 
  $$ \begin{aligned}
  \mathcal{I}_1  &\bydef \int_{\mathbb{R}^2} \left[ \Delta_{j+1}^\varepsilon, u \cdot \nabla  \right] \Delta_{j}^\varepsilon \omega      \left( \Delta_j^\varepsilon +  \Delta_{j+1}^\varepsilon \right)   \omega   dx  \\
   & \quad-\frac{1}{2}  \int_{\mathbb{R}^2} \left[  \Delta_{j}^\varepsilon\Delta_{j+1}^\varepsilon , u \cdot \nabla \right]  \left( \Delta_{j}^\varepsilon +  \Delta_{j+1}^\varepsilon   \right)  \omega    \left( \Delta_{j}^\varepsilon +  \Delta_{j+1}^\varepsilon   \right)  \omega dx ,
  \end{aligned}$$
 and 
 $$\mathcal{I}_2 \bydef  \int_{\mathbb{R}^2} \mathds{1}_{|D|> \frac{\Theta_\varepsilon 2^j}{3}} u   \cdot \nabla \Delta_{j}^\varepsilon \omega         \Delta_{j+1}^\varepsilon\Delta_{j+2}^\varepsilon  \omega dx .$$
 \end{lem}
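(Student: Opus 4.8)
The plan is to start from the left-hand side $\int_{\mathbb{R}^2}u\cdot\nabla\Delta_j^\varepsilon\omega\,\Delta_{j+1}^\varepsilon\omega\,dx$ and reorganize it into the commutator terms of $\mathcal{I}_1$ plus the high-frequency remainder $\mathcal{I}_2$, using only the algebraic identities \eqref{identity:2}--\eqref{identity:3}, the self-adjointness of the Fourier multipliers $\Delta_k^\varepsilon$ (each being convolution against a Schwartz kernel), and the divergence-free condition $\div u=0$. First I would record the regularity observations that legitimize every manipulation below: since $\omega\in L^2$ and $\Delta_k^\varepsilon=\varphi_k(\Theta_\varepsilon^{-1}D)$, each piece $\Delta_k^\varepsilon\omega$ is smooth and band-limited, hence belongs to $L^2\cap L^\infty$ with $\nabla\Delta_k^\varepsilon\omega\in L^2\cap L^\infty$ by Bernstein's inequality; consequently products such as $\nabla\Delta_j^\varepsilon\omega\,\Delta_{j+1}^\varepsilon\Delta_{j+2}^\varepsilon\omega$ lie in $L^1\cap L^2$, their pairing with $u\in L^2$ is well-defined, the self-adjoint multipliers may be moved freely between factors, and integration by parts against $u$ is permitted.

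The core computation proceeds as follows. Using \eqref{identity:2} I would write $\Delta_{j+1}^\varepsilon\omega=\Delta_{j+1}^\varepsilon(\Delta_j^\varepsilon+\Delta_{j+1}^\varepsilon+\Delta_{j+2}^\varepsilon)\omega$, move the self-adjoint $\Delta_{j+1}^\varepsilon$ onto the first factor, and split off the commutator, obtaining
\[
\int u\cdot\nabla\Delta_j^\varepsilon\omega\,\Delta_{j+1}^\varepsilon\omega\,dx
=\int [\Delta_{j+1}^\varepsilon,u\cdot\nabla]\Delta_j^\varepsilon\omega\,(\Delta_j^\varepsilon+\Delta_{j+1}^\varepsilon+\Delta_{j+2}^\varepsilon)\omega\,dx
+\int u\cdot\nabla(\Delta_j^\varepsilon\Delta_{j+1}^\varepsilon\omega)\,(\Delta_j^\varepsilon+\Delta_{j+1}^\varepsilon+\Delta_{j+2}^\varepsilon)\omega\,dx.
\]
In the first integral I peel off the $\Delta_{j+2}^\varepsilon\omega$ contribution — leaving exactly the first term of $\mathcal{I}_1$ — and then expand the remaining commutator $\int[\Delta_{j+1}^\varepsilon,u\cdot\nabla]\Delta_j^\varepsilon\omega\,\Delta_{j+2}^\varepsilon\omega\,dx$ once more. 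Its first half equals $\int u\cdot\nabla\Delta_j^\varepsilon\omega\,\Delta_{j+1}^\varepsilon\Delta_{j+2}^\varepsilon\omega\,dx$; here the spectral localization \eqref{support:1}, applied with $i=j$, $f=\nabla\omega$, $g=\omega$ (noting $\Delta_j^\varepsilon\nabla\omega=\nabla\Delta_j^\varepsilon\omega$ since multipliers commute), shows this product is spectrally supported in $\{|\xi|\geq\Theta_\varepsilon 2^j/3\}$, so by Plancherel the cutoff $\mathds{1}_{|D|>\Theta_\varepsilon 2^j/3}$ may be inserted on $u$, producing precisely $\mathcal{I}_2$. Its second half, $-\int u\cdot\nabla(\Delta_j^\varepsilon\Delta_{j+1}^\varepsilon\omega)\,\Delta_{j+2}^\varepsilon\omega\,dx$, cancels the $\Delta_{j+2}^\varepsilon$ piece coming from the second integral above, and everything not yet accounted for collapses to $\int u\cdot\nabla(\Delta_j^\varepsilon\Delta_{j+1}^\varepsilon\omega)\,(\Delta_j^\varepsilon+\Delta_{j+1}^\varepsilon)\omega\,dx$.

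The last step is to symmetrize this diagonal term into the second term of $\mathcal{I}_1$. Setting $w\bydef(\Delta_j^\varepsilon+\Delta_{j+1}^\varepsilon)\omega$, identity \eqref{identity:3} together with commutativity of the multipliers gives $\Delta_j^\varepsilon\Delta_{j+1}^\varepsilon w=\Delta_j^\varepsilon\Delta_{j+1}^\varepsilon\omega$, whence
\[
[\Delta_j^\varepsilon\Delta_{j+1}^\varepsilon,u\cdot\nabla]w=\Delta_j^\varepsilon\Delta_{j+1}^\varepsilon(u\cdot\nabla w)-u\cdot\nabla(\Delta_j^\varepsilon\Delta_{j+1}^\varepsilon\omega).
\]
Pairing with $w$, moving the self-adjoint $\Delta_j^\varepsilon\Delta_{j+1}^\varepsilon$ back onto $w$ in the first term, and using $\int u\cdot\nabla(\Delta_j^\varepsilon\Delta_{j+1}^\varepsilon\omega\cdot w)\,dx=0$ (valid since $\div u=0$ and the product lies in $H^1$), I obtain $\int u\cdot\nabla(\Delta_j^\varepsilon\Delta_{j+1}^\varepsilon\omega)\,w\,dx=-\tfrac12\int[\Delta_j^\varepsilon\Delta_{j+1}^\varepsilon,u\cdot\nabla]w\,w\,dx$, which is exactly the remaining term of $\mathcal{I}_1$; collecting the three contributions proves the identity. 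I expect the only genuinely delicate points to be, first, keeping the support bookkeeping of \eqref{support:1} consistent — in particular recognizing $\nabla\Delta_j^\varepsilon\omega$ as $\Delta_j^\varepsilon$ applied to the distribution $\nabla\omega$ so that \eqref{support:1} applies verbatim — and, second, making sure each ``move the self-adjoint multiplier'' and each integration by parts is justified at the stated low regularity $u,\omega\in L^2$; this is precisely what the Bernstein-type observations at the start handle, and the remainder is purely formal algebra.
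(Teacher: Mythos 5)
Your proof is correct and follows essentially the same route as the paper's: it relies on the same ingredients — identity \eqref{identity:2} to expand $\Delta_{j+1}^\varepsilon\omega$, the support localization \eqref{support:1} to insert the frequency cutoff on $u$ and produce $\mathcal{I}_2$, and identity \eqref{identity:3} combined with $\div u=0$, self-adjointness, and integration by parts for the symmetrization yielding the second term of $\mathcal{I}_1$. The only difference is cosmetic: you introduce the commutator before peeling off the $\Delta_{j+2}^\varepsilon$ contribution and then recover $\mathcal{I}_2$ via a cancellation, whereas the paper splits off $\mathcal{K}_2=\mathcal{I}_2$ first and only then brings in commutators.
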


 \begin{proof}
	For the sake of simplicity, we assume that  $j=0$. The general case $j\in\mathbb{Z}$ follows from a similar argument.
	
	We begin by utilizing identity \eqref{identity:2} to write that
 $$ \begin{aligned}
 \int_{\mathbb{R}^2} u  \cdot \nabla \Delta_{0}^\varepsilon \omega  \Delta_{1}^\varepsilon \omega  =   
 \int _{\mathbb{R}^2} u  \cdot \nabla \Delta_{0}^\varepsilon \omega     \left( \Delta_0^\varepsilon +  \Delta_{1}^\varepsilon   \right) \Delta_{1}^\varepsilon \omega   +  \int_{\mathbb{R}^2} u  \cdot \nabla \Delta_{0}^\varepsilon\omega         \Delta_{1}^\varepsilon  \Delta_{2}^\varepsilon\omega  \bydef \mathcal{K}_1 + \mathcal{K}_2.
\end{aligned} 
 $$
	Then, by virtue of \eqref{support:1}, the expression $\mathcal{K}_2$  can be recast as
 $$\mathcal{K}_2 =  \int_{\mathbb{R}^2}  \mathds{1}_{|D|> \frac{\Theta_\varepsilon}{3}}u   \cdot \nabla \Delta_{0}^\varepsilon \omega        \Delta_{1}^\varepsilon  \Delta_{2}^\varepsilon \omega ,$$
which is precisely $\mathcal{I}_2$.

 As for $\mathcal{K}_1$, we split it into two parts  
 $$\mathcal{K}_1 = \int_{\mathbb{R}^2} \left[ \Delta_1^\varepsilon,u \cdot \nabla \right] \Delta_{0}^\varepsilon \omega      \left( \Delta_0^\varepsilon +  \Delta_{1}^\varepsilon   \right)   \omega + \int_{\mathbb{R}^2} u  \cdot \nabla \Delta_{0}^\varepsilon\Delta_{1}^\varepsilon \omega     \left( \Delta_0^\varepsilon +  \Delta_{1}^\varepsilon   \right)  \omega  \bydef \mathcal{K}_{11} + \mathcal{K}_{12},$$
where we utilized the fact that $\Delta_1^\varepsilon $ is a self-adjoint operator combined with the identity 
 \begin{equation*}
 	 \Delta_1^\varepsilon \left( u  \cdot \nabla \Delta_{0}^\varepsilon \omega\right)=   \left[ \Delta_1^\varepsilon,u \cdot \nabla \right] \Delta_{0}^\varepsilon \omega +  u  \cdot \nabla \Delta_{0}^\varepsilon\Delta_{1}^\varepsilon \omega ,
 \end{equation*}
which follows from the definition of the commutator symbol.
In particular, observe that $\mathcal{K}_{11}$ already matches the first term in $\mathcal{I}_1$.
 
 Regarding $\mathcal{K}_{12}$, we perform an integration by parts followed by \eqref{identity:3} to deduce that
 $$ \begin{aligned}
 \mathcal{K}_{12} & =- \int_{\mathbb{R}^2} u  \cdot \nabla  \left( \Delta_0^\varepsilon +  \Delta_{1}^\varepsilon   \right)  \omega    \Delta_{0}^\varepsilon\Delta_{1}^\varepsilon \omega   \\
   & = -  \int _{\mathbb{R}^2} u  \cdot \nabla  \left( \Delta_0^\varepsilon +  \Delta_{1}^\varepsilon   \right)  \omega  \Delta_{0}^\varepsilon\Delta_{1}^\varepsilon \left( \Delta_0^\varepsilon +  \Delta_{1}^\varepsilon   \right)  \omega,
 \end{aligned}$$
which, employing commutators, can be rewritten as
   $$ \begin{aligned}
   \mathcal{K}_{12} &  = -  \int_{\mathbb{R}^2} \left[  \Delta_{0}^\varepsilon\Delta_{1}^\varepsilon, u \cdot \nabla \right]  \left( \Delta_0^\varepsilon +  \Delta_{1}^\varepsilon   \right)  \omega   \left( \Delta_0^\varepsilon +  \Delta_{1}^\varepsilon   \right)  \omega \\
   &\quad-    \int_{\mathbb{R}^2} u  \cdot \nabla      \Delta_{0}^\varepsilon\Delta_{1}^\varepsilon\left( \Delta_0^\varepsilon +  \Delta_{1}^\varepsilon   \right)  \omega     \left( \Delta_0^\varepsilon +  \Delta_{1}^\varepsilon   \right)  \omega .
 \end{aligned}
 $$
 Then, we use \eqref{identity:3}, again, to find that 
   $$ \begin{aligned}
   \mathcal{K}_{12} & = -  \int_{\mathbb{R}^2} \left[  \Delta_{0}^\varepsilon\Delta_{1}^\varepsilon, u \cdot \nabla \right]  \left( \Delta_0^\varepsilon +  \Delta_{1}^\varepsilon   \right)  \omega   \left( \Delta_0^\varepsilon +  \Delta_{1}^\varepsilon   \right)  \omega  -    \int_{\mathbb{R}^2} u  \cdot \nabla     \Delta_{0}^ \varepsilon\Delta_{1}^\varepsilon  \omega    \left( \Delta_0^\varepsilon +  \Delta_{1}^\varepsilon   \right)  \omega \\
   & = -  \int_{\mathbb{R}^2} \left[ \Delta_{0}^\varepsilon\Delta_{1}^\varepsilon,u \cdot \nabla \right]  \left( \Delta_0^\varepsilon +  \Delta_{1}^\varepsilon   \right)  \omega  \left( \Delta_0^\varepsilon +  \Delta_{1}^\varepsilon   \right)  \omega  - \mathcal{K}_{12}.
 \end{aligned}$$
Consequently, we arrive at the conclusion that 
 $$ \mathcal{K}_{12} = -\frac{1}{2}  \int_{\mathbb{R}^2} \left[   \Delta_{0}^\varepsilon\Delta_{1}^\varepsilon , u \cdot \nabla\right]  \left( \Delta_0^\varepsilon +  \Delta_{1}^\varepsilon   \right)  \omega    \left( \Delta_0^\varepsilon +  \Delta_{1}^\varepsilon   \right)  \omega ,  $$
 thereby completing the proof of the lemma by successfully identifying all the terms in $\mathcal{I}_1$. 
 \end{proof}

\begin{lem}\label{T3-ES}
	For any $u\in \dot{H}^1(\mathbb{R}^2)$ and $\omega \in L^4(\mathbb{R}^2)$,
	it holds that
	$$ \begin{aligned}
	\left| \int_{\mathbb{R}^2} u \cdot \nabla \Delta_{j}^\varepsilon \omega \Delta_{j+1}^\varepsilon \omega  \right| \lesssim    \norm {\nabla u  }_{L^2} \left( \norm {\Delta_j^\varepsilon  \omega  }_{L^4}^2 + \norm {\Delta_{j+1}^\varepsilon  \omega  }_{L^4}^2\right)  ,
	\end{aligned} 
	$$
	for all $j\in \mathbb{Z}$.
	
	More precisely, there is a decomposition
	$$ \int_{\mathbb{R}^2} u \cdot \nabla \Delta_{j}^\varepsilon \omega \Delta_{j+1}^\varepsilon \omega =  \mathcal{I}_{1} + \mathcal{I}_2,$$
	where 
	$$ |\mathcal{I}_1| \lesssim  \norm {\nabla u  }_{L^ q}
	\left(
	\norm { \Delta_j^\varepsilon \omega     }_{L^\frac{2q}{q-1} }^2
	+
	\norm {\Delta_{j+1}^\varepsilon \omega     }_{L^\frac{2q}{q-1} }^2
	\right)$$
	and 
	$$ |\mathcal{I}_2   | \lesssim \norm {\mathds{1}_{|D|>\frac{\Theta_\varepsilon 2^j}{3}}  \nabla u}_{L^2} \norm { \Delta_j^\varepsilon \omega}_{L^{2p}} \norm{\Delta_{j+1}^\varepsilon\omega}_{L^{\frac{2p}{p-1}}},$$
	for any $p,q\in [1,\infty]$ and $j\in\mathbb{Z}$.
\end{lem}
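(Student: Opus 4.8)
The plan is to derive Lemma~\ref{T3-ES} directly from the exact decomposition
$\int_{\mathbb{R}^2}u\cdot\nabla\Delta_j^\varepsilon\omega\,\Delta_{j+1}^\varepsilon\omega\,dx=\mathcal{I}_1+\mathcal{I}_2$ furnished by Lemma~\ref{lemma:iden:1} (to apply it we may assume in addition that $u\in L^2(\mathbb{R}^2)$, since the final bounds involve only $\nabla u$ and the general case then follows by a routine density argument), and to estimate $\mathcal{I}_1$ and $\mathcal{I}_2$ separately. The coarse inequality is then the special case $q=p=2$, using that $\omega\in L^4$ and that each block $\Delta_i^\varepsilon$ is bounded, uniformly in $\varepsilon$, on every Lebesgue space.

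For $\mathcal{I}_2$, I would start from the representation
$\mathcal{I}_2=\int_{\mathbb{R}^2}\bigl(\mathds{1}_{|D|>\Theta_\varepsilon 2^j/3}u\bigr)\cdot\nabla\Delta_j^\varepsilon\omega\;\Delta_{j+1}^\varepsilon\Delta_{j+2}^\varepsilon\omega\,dx$ from Lemma~\ref{lemma:iden:1}, apply H\"older's inequality with exponents $2$, $2p$ and $\tfrac{2p}{p-1}$, and then invoke three elementary facts: Bernstein's inequality $\norm{\nabla\Delta_j^\varepsilon\omega}_{L^{2p}}\lesssim\Theta_\varepsilon 2^j\norm{\Delta_j^\varepsilon\omega}_{L^{2p}}$, the uniform $L^{\frac{2p}{p-1}}$-boundedness of $\Delta_{j+2}^\varepsilon$, and the Plancherel estimate $\norm{\mathds{1}_{|D|>\lambda}u}_{L^2}\leq\lambda^{-1}\norm{\mathds{1}_{|D|>\lambda}\nabla u}_{L^2}$ applied with $\lambda=\Theta_\varepsilon 2^j/3$. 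The two powers of $\Theta_\varepsilon 2^j$ then cancel and leave exactly the claimed bound on $\mathcal{I}_2$; in particular, with $p=2$ and $\norm{\mathds{1}_{|D|>\lambda}\nabla u}_{L^2}\leq\norm{\nabla u}_{L^2}$, one gets $|\mathcal{I}_2|\lesssim\norm{\nabla u}_{L^2}\bigl(\norm{\Delta_j^\varepsilon\omega}_{L^4}^2+\norm{\Delta_{j+1}^\varepsilon\omega}_{L^4}^2\bigr)$.

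The estimate on $\mathcal{I}_1$ is the heart of the matter. Setting $\mu:=(\Theta_\varepsilon 2^j)^{-1}$, the two operators occurring in $\mathcal{I}_1$ are $\theta_1(\mu D)$ and $\theta_2(\mu D)$ with $\theta_1(\zeta)=\varphi(\zeta/2)$ and $\theta_2(\zeta)=\varphi(\zeta)\varphi(\zeta/2)$, two fixed Schwartz functions. Applying H\"older's inequality with exponents $\tfrac{2q}{q+1}$ and $\tfrac{2q}{q-1}$ to each of the two integrals defining $\mathcal{I}_1$, and then Young's inequality $ab\leq\tfrac12(a^2+b^2)$ together with the triangle inequality for $(\Delta_j^\varepsilon+\Delta_{j+1}^\varepsilon)\omega$, the bound on $\mathcal{I}_1$ reduces to the commutator estimate
\begin{equation*}
\norm{[\theta(\mu D),u\cdot\nabla]h}_{L^r}\lesssim\norm{\nabla u}_{L^q}\norm{h}_{L^s},\qquad \tfrac1r=\tfrac1q+\tfrac1s,
\end{equation*}
uniformly in $\mu>0$, for $\theta\in\{\theta_1,\theta_2\}$ and $h\in\{\Delta_j^\varepsilon\omega,\ (\Delta_j^\varepsilon+\Delta_{j+1}^\varepsilon)\omega\}$; here $\tfrac1q+\tfrac1s=\tfrac1q+\tfrac{q-1}{2q}=\tfrac{q+1}{2q}=\tfrac1r$ with $r=\tfrac{2q}{q+1}$ and $s=\tfrac{2q}{q-1}$. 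To prove this estimate I would argue as follows: by a scaling argument the constant can be taken independent of $\mu$, so it suffices to treat $\mu=1$; writing $\theta(D)$ as convolution against $k:=\mathcal{F}^{-1}\theta\in\mathcal{S}$ and using $\div u=0$, an integration by parts gives the pointwise identity
\begin{equation*}
[\theta(D),u\cdot\nabla]h(x)=\int_{\mathbb{R}^2}(\nabla k)(x-y)\cdot\bigl(u(y)-u(x)\bigr)h(y)\,dy;
\end{equation*}
the mean value formula $u(y)-u(x)=-\int_0^1\nabla u\bigl(x+t(y-x)\bigr)(x-y)\,dt$ converts the kernel into the Schwartz functions $z\mapsto\partial_l k(z)\,z_m$, after which Minkowski's integral inequality (in $t$ and in the convolution variable), H\"older's inequality with $\tfrac1r=\tfrac1q+\tfrac1s$, and translation invariance yield the bound with a constant depending only on $\theta$.

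The main obstacle is precisely this commutator estimate — and, within it, organizing the argument so that the constant does not depend on $\mu$ (equivalently, on $j$ and $\varepsilon$), which is where the divergence-free structure of $u$ and the scaling homogeneity $\tfrac1r=\tfrac1q+\tfrac1s$ are both essential. Everything else ($\mathcal{I}_2$, and the H\"older/Young bookkeeping for $\mathcal{I}_1$) is routine. Once the commutator estimate is in hand, assembling the bounds on $\mathcal{I}_1$ and $\mathcal{I}_2$ and specializing to $q=p=2$ (so that $\tfrac{2q}{q-1}=2p=\tfrac{2p}{p-1}=4$) completes the proof of Lemma~\ref{T3-ES}.
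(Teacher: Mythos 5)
Your proof is correct and follows the paper's route exactly: apply Lemma \ref{lemma:iden:1} to obtain the decomposition $\mathcal{I}_1+\mathcal{I}_2$, bound $\mathcal{I}_2$ by H\"older, Bernstein, and the Plancherel gain on high-frequency cut-offs, and bound $\mathcal{I}_1$ via the scale-invariant commutator estimate. The only difference is that you derive the commutator estimate from scratch (kernel representation, integration by parts using $\div u=0$, mean-value formula, Minkowski and H\"older, with the $\mu$-uniformity coming from the scaling homogeneity $\tfrac1r=\tfrac1q+\tfrac1s$), whereas the paper simply cites it from \cite[Lemma 2.97]{bcd11}.
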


 \begin{proof}
	The decomposition into $\mathcal{I}_1+\mathcal{I}_2$ follow from an application of Lemma \ref{lemma:iden:1}. Then, a direct application of H\"older's inequality gives that
	\begin{equation*}
		\begin{aligned}
			|\mathcal{I}_2   |
			&\lesssim
			\norm {\mathds{1}_{|D|>\frac{\Theta_\varepsilon 2^j}{3}} u}_{L^2} \norm {\nabla \Delta_j^\varepsilon \omega}_{L^{2p}} \norm{\Delta_{j+1}^\varepsilon\omega}_{L^{\frac{2p}{p-1}}}
			\\
			&\lesssim 2^j\Theta_\varepsilon
			\norm {\mathds{1}_{|D|>\frac{\Theta_\varepsilon 2^j}{3}} u}_{L^2} \norm { \Delta_j^\varepsilon \omega}_{L^{2p}} \norm{\Delta_{j+1}^\varepsilon\omega}_{L^{\frac{2p}{p-1}}}
			\\
			&\lesssim 
			\norm {\mathds{1}_{|D|>\frac{\Theta_\varepsilon 2^j}{3}} \nabla u}_{L^2} \norm { \Delta_j^\varepsilon \omega}_{L^{2p}} \norm{\Delta_{j+1}^\varepsilon\omega}_{L^{\frac{2p}{p-1}}},
		\end{aligned}
	\end{equation*}
	which is the desired bound on $\mathcal{I}_2$.
	
	The control of $\mathcal{I}_1$ is slightly more involved. It requires the use of a classical commutator estimate, which can be found in \cite[Lemma 2.97]{bcd11} and gives that
	\begin{equation*}
		\norm{
		\left[ \Delta_{j+1}^\varepsilon, u \cdot \nabla  \right]
		\Delta_{j}^\varepsilon \omega
		}_{L^{\frac{2q}{q+1}}}
		\lesssim (2^j\Theta_\varepsilon)^{-1}
		\norm{\nabla u}_{L^q}\norm{\nabla\Delta_j^\varepsilon\omega}_{L^{\frac{2q}{q-1}}}
		\lesssim
		\norm{\nabla u}_{L^q}\norm{\Delta_j^\varepsilon\omega}_{L^{\frac{2q}{q-1}}}.
	\end{equation*}
	Combining this bound with another use of H\"older's inequality provides a suitable estimate on the first term in $\mathcal{I}_1$. The control of the second term in $\mathcal{I}_1$ is similar, which completes the proof of the Lemma.
\end{proof}

\subsection{Sketch of proof of Theorem \ref{thm3}}

As previously emphasized, the proof of Theorem \ref{thm3} can be derived from a straightforward adaptation of the method of proof of Theorem \ref{thm2}. For the sake of completeness, we summarize below its key elements:

\begin{enumerate}
	\item First, the convergence  $u^\varepsilon \to u$ in $L^\infty_tL^2$ is classical and can be found in \cite{CH96CPDE}.
	\item Then, the convergence of the low frequencies of vorticities  in $L^\infty_tL^2$ relies on the convergence of velocities in $L^\infty_t L^2$ established in the preceding step and is obtained by following, \emph{mutatis mutandis,} the procedure from Section \ref{section:low:freq}.
	\item Finally, it remains to establish the evanescence of the high frequencies of the viscous vorticities. This can be achieved by following the roadmap laid out in Section \ref{section:high:freq}. Indeed, the only difference between the settings of Theorems \ref{thm2} and \ref{thm3} becomes apparent in \eqref{energy-HF}, which, employing the same notation, needs now to be replaced by the estimate
	\begin{equation*}
	\begin{aligned}
	 \frac{1}{2 }\norm{  \sqrt{\mathrm{Id} - S_0^\varepsilon} \omega^\varepsilon (t )   }_{L^2}^2 &\leq \frac{1}{2 }\norm{  \sqrt{\mathrm{Id} - S_0^\varepsilon} \omega^\varepsilon_0 }_{L^2}^2
	 \\
	 &\quad+ \int_0^t  \int_{\mathbb{R}^2}  \curl g^\varepsilon  (\mathrm{Id} - S_0^\varepsilon) \omega^\varepsilon(\tau,x)  dxd\tau
	 -\mathcal{J}(t).
	\end{aligned}
	\end{equation*}
	The fact that \eqref{energy-HF} can be substituted by the preceding inequality is a direct consequence of the positivity of the dissipation produced by the diffusive term $-\varepsilon \Delta \omega^\varepsilon$ when performing energy estimates. The rest of the proof proceeds then with no further changes and allows us to complete the justification of Theorem \ref{thm3}. \qed
\end{enumerate}

\section{Rate of convergence of the inviscid limit in Yudovich's class}
\label{section:rate}

\subsection{Second main result}

Here, we show that our new method of proof of Theorem \ref{thm2} leads to a refined understanding of the inviscid limit in the two-dimensional Navier--Stokes system
\begin{equation}\label{NS*-2}
	\begin{cases}
		\begin{aligned}
 \partial_t u^{ \varepsilon} +u^{ \varepsilon}  \cdot\nabla u^{ \varepsilon}   - \varepsilon \Delta u^{ \varepsilon}   + \nabla p^{ \varepsilon}  &=0, 
			\\
			 \div u^{ \varepsilon}  &= 0,\\
			{ u^{ \varepsilon} } |_{t=0}  &=u_{0} .
		\end{aligned}
	\end{cases}
\end{equation}
Specifically, we prove the following result.

(In the statement below, we use the classical notation $B^s_{p,q}$ to denote the usual Besov space of regularity $s\in\mathbb{R}$, integrability $p\in[1,\infty]$ and frequency summability $q\in[1,\infty]$. We will also make use of the homogeneous version of Besov spaces defined in the usual way.)

\begin{thm}\label{thm-rate-general}
	Let $T>0$, $s\in (0,1)$ and $u_0\in H^1 \cap \dot{B}^{1+s}_{2,\infty} \cap  \dot{B}^{1+s}_{p,\infty}$, for some $p\in (2,\infty)$. Assume also that  $\omega_0 = \curl u_0\in L^\infty$. Let  $u^\varepsilon$  be the unique   solution  of \eqref{NS*-2} and $u$ be the unique Yudovich solution of the Euler equation \eqref{Euler} with the same initial data $u_0$ and no source term, i.e., $g=0$.
	
	Further assume that
	\begin{equation}\label{alpha:convergence}
		 \norm {u^\varepsilon-u}_{L^\infty([0,T];L^2)}=O\left( \varepsilon^{\alpha}\right),
	\end{equation}
	for some $\alpha>0$, as $ \varepsilon\to 0$, and that
	\begin{equation}\label{Besov-ES:general}
		 t\mapsto  \norm{(\omega^\varepsilon,\omega)(t)}_{B^{s(t) } _{2,\infty}\cap
		 B^{s(t) } _{p,\infty}(\mathbb{R}^2)   } \in L^\infty([0,T] ),
	\end{equation}
	uniformly in $\varepsilon$, for some nonnegative function $s(t)\in C^1([0,T])$, with $s(0)=s$ and $s'(t)<0$, for all $t\in [0,T]$.
	
	Then, it holds that
	$$
	\norm {u^\varepsilon-u}_{L^\infty([0,T];\dot{H}^1)}
	=O\left(
	\left(\frac{\varepsilon^{ 2\alpha s(T) }}
	{|\log \varepsilon| }\right)^{ \frac{1}{2(1 + s(T)) }} \right),
	$$
	as $\varepsilon\to 0.$
\end{thm}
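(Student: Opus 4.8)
The plan is to run the compactness-extrapolation scheme behind the proof of Theorem~\ref{thm2}, kept quantitative, and to extract a logarithmic gain from the strict monotonicity of $s(t)$. Write $\sigma\bydef s(T)$, $\omega^\varepsilon\bydef\curl u^\varepsilon$, $\omega\bydef\curl u$, and let $\Theta_\varepsilon\to\infty$ be a threshold to be fixed at the end, with $S_0^\varepsilon=\psi(\Theta_\varepsilon^{-1}D)$ as in \eqref{S0:def}. Writing $\omega^\varepsilon-\omega=S_0^\varepsilon(\omega^\varepsilon-\omega)+(\mathrm{Id}-S_0^\varepsilon)\omega^\varepsilon-(\mathrm{Id}-S_0^\varepsilon)\omega$ and arguing exactly as in Section~\ref{section:low:freq} and in the proof of Lemma~\ref{lemma:extrapolation}, together with \eqref{M1}, one obtains
$$
\norm{u^\varepsilon-u}_{L^\infty([0,T];\dot H^1)}=\norm{\omega^\varepsilon-\omega}_{L^\infty([0,T];L^2)}\lesssim \Theta_\varepsilon\norm{u^\varepsilon-u}_{L^\infty([0,T];L^2)}+\norm{\sqrt{\mathrm{Id}-S_0^\varepsilon}\omega^\varepsilon}_{L^\infty([0,T];L^2)}+\norm{\sqrt{\mathrm{Id}-S_0^\varepsilon}\omega}_{L^\infty([0,T];L^2)}.
$$
By \eqref{alpha:convergence} the first term is $O(\Theta_\varepsilon\varepsilon^\alpha)$; the task is to bound the two high-frequency terms with a $(\log\Theta_\varepsilon)^{-1/2}$ improvement over the bare Besov bound $\Theta_\varepsilon^{-\sigma}$.

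For $\omega^\varepsilon$ I would reproduce the energy identity \eqref{energy-HF} for $\partial_t\omega^\varepsilon+u^\varepsilon\cdot\nabla\omega^\varepsilon-\varepsilon\Delta\omega^\varepsilon=0$; the viscous contribution equals $-\varepsilon\norm{\nabla\sqrt{\mathrm{Id}-S_0^\varepsilon}\omega^\varepsilon}_{L^2}^2\le0$ and is simply discarded, so viscosity serves only to supply a favourably signed term. The new input is that each of $\mathcal J_1,\mathcal J_2,\mathcal J_3$ from \eqref{J:decomposition}, together with the datum $\norm{\sqrt{\mathrm{Id}-S_0^\varepsilon}\omega_0}_{L^2}^2\lesssim\Theta_\varepsilon^{-2s}$, is bounded by a fixed multiple of $\int_0^T\Theta_\varepsilon^{-2s(\tau)}\,d\tau$. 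For $\mathcal J_2,\mathcal J_3$ this follows at once from \eqref{J2:ES1}--\eqref{J2:ES2}: the two high-frequency factors of $\omega^\varepsilon$ there, namely $\norm{\mathds 1_{|D|\gtrsim\Theta_\varepsilon}\nabla u^\varepsilon(\tau)}_{L^2}=\norm{\mathds 1_{|D|\gtrsim\Theta_\varepsilon}\omega^\varepsilon(\tau)}_{L^2}$ and $\norm{(\mathrm{Id}-S_0^\varepsilon)\omega^\varepsilon(\tau)}_{L^2}$, are $\lesssim\Theta_\varepsilon^{-s(\tau)}$ uniformly in $\varepsilon$ by \eqref{Besov-ES:general}, while the middle factor is $\lesssim\norm{\omega^\varepsilon(\tau)}_{L^\infty}$. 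For $\mathcal J_1$ one must use the flexible decomposition of Lemma~\ref{T3-ES}: in $\mathcal I_1$ choose a fixed $q\in[2,\infty)$ with $\tfrac{2q}{q-1}\le p$ (possible precisely because $p>2$), so that $\norm{\nabla u^\varepsilon(\tau)}_{L^q}\lesssim q\norm{\omega^\varepsilon(\tau)}_{L^q}$ stays bounded and $\norm{\Delta_i^\varepsilon\omega^\varepsilon(\tau)}_{L^{2q/(q-1)}}\lesssim\Theta_\varepsilon^{-s(\tau)}$ by interpolating the $B^{s(\tau)}_{2,\infty}$ and $B^{s(\tau)}_{p,\infty}$ bounds of \eqref{Besov-ES:general}; in $\mathcal I_2$ take the free integrability exponent equal to $1$, so that $\norm{\Delta_i^\varepsilon\omega^\varepsilon}_{L^2}\lesssim\Theta_\varepsilon^{-s(\tau)}$ pairs with $\norm{\Delta_{i+1}^\varepsilon\omega^\varepsilon}_{L^\infty}\lesssim\norm{\omega^\varepsilon}_{L^\infty}$ and $\norm{\mathds 1_{|D|\gtrsim\Theta_\varepsilon}\nabla u^\varepsilon}_{L^2}\lesssim\Theta_\varepsilon^{-s(\tau)}$. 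All surplus powers of $\Theta_\varepsilon$ cancel, leaving
$$
\norm{\sqrt{\mathrm{Id}-S_0^\varepsilon}\omega^\varepsilon}_{L^\infty([0,T];L^2)}^2\lesssim \Theta_\varepsilon^{-2s}+\int_0^T\Theta_\varepsilon^{-2s(\tau)}\,d\tau,
$$
and the same computation for the Euler vorticity equation — literally \eqref{energy-HF} with $\varepsilon=0$ — yields the identical bound for $\norm{\sqrt{\mathrm{Id}-S_0^\varepsilon}\omega}_{L^\infty([0,T];L^2)}^2$. This last point is essential: estimating the Euler high frequencies directly would only give $\norm{\mathds 1_{|D|>\Theta_\varepsilon}\omega}_{L^\infty([0,T];L^2)}\lesssim\Theta_\varepsilon^{-\sigma}$, which carries no logarithmic gain (since $\omega(T)\in B^\sigma_{2,\infty}$ only) and would obstruct the improvement.

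The memory effect is then the Laplace-type asymptotics
$$
\int_0^T\Theta_\varepsilon^{-2s(\tau)}\,d\tau=\int_0^T e^{-2s(\tau)\log\Theta_\varepsilon}\,d\tau\lesssim\frac{\Theta_\varepsilon^{-2\sigma}}{\log\Theta_\varepsilon},
$$
valid as $\Theta_\varepsilon\to\infty$ because $s\in C^1([0,T])$ is strictly decreasing, so $\min_{[0,T]}|s'|>0$ and the integrand concentrates in a window of width $\sim(\log\Theta_\varepsilon)^{-1}$ about $\tau=T$; the substitution $v=s(\tau)$ makes this explicit, and $\Theta_\varepsilon^{-2s}=o(\Theta_\varepsilon^{-2\sigma}/\log\Theta_\varepsilon)$ since $s=s(0)>\sigma$. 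Combining the above yields
$$
\norm{u^\varepsilon-u}_{L^\infty([0,T];\dot H^1)}\lesssim \Theta_\varepsilon\varepsilon^\alpha+\frac{\Theta_\varepsilon^{-\sigma}}{(\log\Theta_\varepsilon)^{1/2}},
$$
and balancing the two terms — i.e. taking $\Theta_\varepsilon\sim\varepsilon^{-\alpha/(1+\sigma)}\,|\log\varepsilon|^{-1/(2(1+\sigma))}$, for which $\log\Theta_\varepsilon\sim|\log\varepsilon|$ — produces exactly $\norm{u^\varepsilon-u}_{L^\infty([0,T];\dot H^1)}=O\big((\varepsilon^{2\alpha\sigma}/|\log\varepsilon|)^{1/(2(1+\sigma))}\big)$.

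The difficulty is entirely concentrated in the second step: bounding $\mathcal J_1,\mathcal J_2,\mathcal J_3$ in the clean form ``constant $\times\int_0^T\Theta_\varepsilon^{-2s(\tau)}\,d\tau$'', with no residual power of $\Theta_\varepsilon$. This is where the hypotheses are used in full — the regularity $u_0\in\dot B^{1+s}_{p,\infty}$ with $p>2$ renders the logarithmically divergent constant in $\norm{\nabla u}_{L^q}\lesssim q\norm{\omega}_{L^q}$ harmless (one keeps $q$ finite and fixed), the flexibility of the exponents in Lemma~\ref{T3-ES} is what causes the powers of $\Theta_\varepsilon$ to cancel, and the uniform $L^\infty$ bounds on $\omega^\varepsilon,\omega$ (from the maximum principle for the Navier--Stokes vorticity and from \eqref{Transport:ES}) absorb the low-frequency advection factors. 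Everything else — the extrapolation splitting, the energy identity, the Laplace estimate and the final optimisation — is routine once these estimates are in hand; in particular the logarithmic gain is a feature of the Euler dynamics, not of the dissipation.
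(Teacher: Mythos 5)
Your proposal reproduces the paper's proof essentially step by step: the same low/high-frequency split through $\Theta_\varepsilon$, the same use of the energy identity \eqref{energy-HF} with the favourably signed viscous term discarded, the same control of $\mathcal{J}_1,\mathcal{J}_2,\mathcal{J}_3$ via Lemma~\ref{T3-ES} with a fixed $q$ chosen so that $\tfrac{2q}{q-1}\leq p$, the same conversion to $\int_0^T\Theta_\varepsilon^{-2s(\tau)}\,d\tau$ through the Besov bounds \eqref{Besov-ES:general}, the same Laplace-type evaluation of that integral exploiting $s'<0$, and the same optimization of $\Theta_\varepsilon$ with a logarithmic correction. The argument is correct and takes the same approach.
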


	Observe that \eqref{Besov-ES:general} yields that $\omega^\varepsilon$ and $\omega$ are bounded in $L^\infty([0,T];\dot B^{s(T)}_{2,\infty})$, uniformly in $\varepsilon$. Therefore, employing the classical convexity inequality
	\begin{equation*}
		\begin{aligned}
			\norm {u^\varepsilon-u}_{L^\infty([0,T];\dot H^1)}
			&\lesssim
			\norm {u^\varepsilon-u}_{L^\infty([0,T];\dot B^1_{2,1})}
			\\
			&\lesssim
			\norm {u^\varepsilon-u}_{L^\infty([0,T];\dot B^{0}_{2,\infty})}^{\frac{s(T)}{1+s(T)}}
			\norm {u^\varepsilon-u}_{L^\infty([0,T];\dot B^{1+s(T)}_{2,\infty})}^{\frac{1}{1+s(T)}}
			\\
			&\lesssim
			\norm {u^\varepsilon-u}_{L^\infty([0,T];L^2)}^{\frac{s(T)}{1+s(T)}}
			\norm {u^\varepsilon-u}_{L^\infty([0,T];\dot B^{1+s(T)}_{2,\infty})}^{\frac{1}{1+s(T)}}
		\end{aligned}
	\end{equation*}
	along with the bounds \eqref{alpha:convergence} and \eqref{Besov-ES:general}
	yields the interpolated rate of convergence
	\begin{equation}\label{nonoptimal:rate}
		\norm {u^\varepsilon-u}_{L^\infty([0,T];\dot{H}^1)}=O\left( \varepsilon^{\frac{\alpha s(T)}{1+s(T)}}    \right).
	\end{equation}
	This rate of convergence has already been discussed by other authors in \cite[Corollary 2]{CDE}. The conclusion of Theorem \ref{thm-rate-general} establishes a subtle logarithmic refinement of the above rate of convergence as a consequence of the fact that the regularity of $\omega^\varepsilon$ and $\omega$ is strictly larger than $s(T)$ for times $t<T$. This is achieved by building on our new method of proof of Theorem \ref{thm2}.
	
	Thus, the assumption that $s'(t)$ be negative is essential and describes a ``memory effect'' of the higher regularity at previous times. Moreover, it seems plausible that this phenomenon is not exclusive to hydrodynamical models. It therefore deserves further investigation.

\begin{rem}
	A straightforward adaptation of the proof of convergence of velocities in $L^\infty _tL^2$ given in Section \ref{section:recall} shows that \eqref{alpha:convergence} holds with the value $\alpha=\frac 12\exp(-C_*T)$, where
		$C_* = C \norm {\omega_0}_{L^2\cap L^\infty}$,
	for some universal constant $C>0$. A precise justification for this result can be found in \cite[Theorem 7.37]{bcd11}.
	Furthermore, under the assumptions of Theorem \ref{thm-rate-general}, classical regularity results on transport flows (see \cite[Theorem 3.28]{bcd11}) show that the unique solution $u^\varepsilon$ to \eqref{NS*-2} enjoys the decaying regularity estimate \eqref{Besov-ES:general} with $s(t)=s\exp(-C_*t)$. Hence, an application of Theorem \ref{thm-rate-general} establishes the rate of convergence
	$$
	\norm {u^\varepsilon-u}_{L^\infty([0,T];\dot{H}^1)}
	=O\left(
	\left(\frac{\varepsilon^{s\exp(-2C_*T)}}
	{|\log \varepsilon| }\right)^{ \frac{1}{2(1+s\exp(-C_*T))}} \right),
	$$
	as $\varepsilon\to 0$. Note that there are other results (see \cite{brue21}, for instance) showing that \eqref{Besov-ES:general} holds with even slower regularity decays, thereby further improving the ensuing rates of convergence stemming from the application of Theorem \ref{thm-rate-general}.
\end{rem}

\begin{rem}
	Notably, as shown in \cite{CH96CPDE}, it is possible to construct solutions for a large class of velocities which do not have finite initial energy, i.e., such that $u_0\notin L^2$.
We believe that Theorem \ref{thm-rate-general} remains valid even when $u_0$ is unbounded in $L^2$. Indeed, the proof of the theorem only uses the fact that the difference $u^\varepsilon-u$ vanishes in $L^\infty_tL^2$, which is assumed to hold in \eqref{alpha:convergence}. This slight relaxation of the statement of Theorem \ref{thm-rate-general} would then allow for the consideration of vortex patches with non-smooth boundary, i.e., solutions which correspond to the initial data
$$\omega_0 = \mathds{1}_{\Omega},$$
where  $\Omega$ is a bounded domain in $\mathbb{R}^2$ whose boundary has a Minkowski dimension $D$ which is strictly less than $2$. Indeed, in that case,  one can show (see \cite[Lemma 3.2]{CW96}) that
$$\omega_0 \in B^{\frac{2-D}{p}} _{p,\infty}(\mathbb{R}^2),$$
for any $p\in [1,\infty)$.
Accordingly, it follows that
$$\omega_0 \in B^{s} _{2,\infty} \cap  B^{s} _{p,\infty}(\mathbb{R}^2) ,$$ 
for some $s\in (0,1)$.
\end{rem}

\subsection{Proof of Theorem \ref{thm-rate-general}}

This proof consists in refining the ideas leading to Theorem \ref{thm2} by exploiting the available additional regularity and optimizing the choice of parameters. We proceed in three steps:
\begin{enumerate}
	\item
	In Section \ref{rate:section:1}, we revisit the arguments laid out in Section \ref{section:low:freq}.
	\item
	In Section \ref{rate:section:2}, we exploit the additional regularity of solutions to improve the ideas behind the Compactness Extrapolation Lemma (Lemma \ref{lemma:extrapolation}).
	\item
	In Section \ref{rate:section:3}, we optimize the choice of $\Theta_\varepsilon$, which is an essential parameter describing the convergence of solutions.
\end{enumerate}

\subsubsection{Bounds on low frequencies}\label{rate:section:1}

We see that
\begin{equation*}
	\norm { \mathds{1}_{|D|\leq \Theta_\varepsilon}(\omega^\varepsilon - \omega )  }_{L^\infty([0,T]; L^2)}
	\lesssim
	\Theta_\varepsilon  \varepsilon^{\alpha},
\end{equation*}
for any parameter $\Theta_\varepsilon$, which will be chosen such that
\begin{equation}\label{theta:conditions}
	\lim_{\varepsilon \rightarrow 0}  \Theta_\varepsilon  = \infty
	\quad \text{ and }\quad
	\lim_{\varepsilon \rightarrow 0} \Theta_\varepsilon  \varepsilon^{\alpha}=0
\end{equation}
to ensure the convergence of low frequencies.

\subsubsection{Bounds on high frequencies}\label{rate:section:2}

We focus   now     on the evanescence of high frequencies. To that end, we write that
$$  \norm { \mathds{1}_{|D|\geq \Theta_\varepsilon}(\omega^\varepsilon - \omega )  }_{L^\infty([0,T]; L^2)} \leq  \norm { \mathds{1}_{|D|\geq \Theta_\varepsilon} \omega^\varepsilon    }_{L^\infty([0,T]; L^2)} + \norm { \mathds{1}_{|D|\geq \Theta_\varepsilon} \omega   }_{L^\infty([0,T]; L^2)} $$
and we estimate each term in the right-hand side, above, by refining the techniques laid out in Section \ref{section:high:freq} and exploiting the available additional regularity of velocities to obtain a sharp rate of convergence. We only  outline   the control of the high frequencies of $\omega^\varepsilon$, for the control of $\omega$ is identical.

From \eqref{energy-HF}, we find that
\begin{equation} \label{Energy-rate}
	\begin{aligned}
		\frac{1}{2 }\norm{  \sqrt{\mathrm{Id} - S_0^\varepsilon} \omega^\varepsilon (t )   }_{L^2}^2
		&=\frac{1}{2 }\norm{  \sqrt{\mathrm{Id} - S_0^\varepsilon} \omega _0 }_{L^2}^2
		-\varepsilon\norm{  \sqrt{\mathrm{Id} - S_0^\varepsilon} \omega^\varepsilon  }_{L^2_t \dot H^1}^2
		-\mathcal{J}(t)
		\\
		&\leq
		\frac{1}{2 }\norm{  \sqrt{\mathrm{Id} - S_0^\varepsilon} \omega _0 }_{L^2}^2
		+ \sum_{i=1}^3| \mathcal{ J}_i (t) |,
	\end{aligned}
\end{equation}
where  $\{ \mathcal{ J}_i\}_{i\in \{1,2,3\}}$ are given in \eqref{J:decomposition}.

Then, by applying Lemma \ref{T3-ES}, we find that 
 $$ \begin{aligned}
  |\mathcal{J}_1(t)|& \lesssim \int_0^t  \norm {\nabla u^\varepsilon (\tau)  }_{L^ q}
  \left(
  \norm {  \Delta_{-1}^\varepsilon \omega^\varepsilon  (\tau)    }_{L^\frac{2q}{q-1} }^2
  +
  \norm { \Delta_{0}^\varepsilon    \omega^\varepsilon  (\tau)    }_{L^\frac{2q}{q-1} }^2
  \right)
  d\tau  \\
  & \quad +  \int_0^t  \norm {\mathds{1}_{|D|>\frac{\Theta_\varepsilon }{6}}  \nabla u ^\varepsilon  (\tau) }_{L^2} \norm { \Delta_{-1}^\varepsilon \omega^\varepsilon  (\tau) }_{L^\infty} \norm{\Delta_{0}^\varepsilon\omega ^\varepsilon (\tau) }_{L^2}  d\tau,
 \end{aligned} $$
 where $q  \in (2,\infty)$ is fixed. It follows that 
$$ 
	|\mathcal{J}_1(t)|  \lesssim
	\norm {\omega ^\varepsilon }_{  L^\infty([0,t];  L^2\cap L^\infty) } \int_0^t    \norm {(\mathrm{Id}-S_{-3}^\varepsilon) \omega ^\varepsilon(\tau)}_{L^{\frac{2q}{q-1}}\cap L^{2} } ^2   d\tau,
$$
where we denoted
\begin{equation*}
	S_k^\varepsilon=\sum_{j\leq k-1}\Delta_j^\varepsilon,
\end{equation*}
for any $k\in\mathbb{Z}$. This takes care of $\mathcal{J}_1$.

As for $\mathcal{J}_2$ and $\mathcal{J}_3$, it is readily seen from \eqref{J2:ES1} and \eqref{J2:ES2} that
$$ 
	|\mathcal{J}_2(t)|+|\mathcal{J}_3(t)|  \lesssim
	\norm {\omega ^\varepsilon }_{  L^\infty([0,t]; L^\infty) } \int_0^t    \norm {(\mathrm{Id}-S_{-4}^\varepsilon) \omega ^\varepsilon(\tau)}_{ L^{2} }^2   d\tau.
$$
All in all, by plugging the foregoing bounds into \eqref{Energy-rate} and utilizing  \eqref{Transport:ES} with $p\in\{2,\infty\}$, we obtain that
  \begin{equation*} 
\norm{  \sqrt{\mathrm{Id} - S_0^\varepsilon} \omega^\varepsilon (t )   }_{L^2} ^2    \lesssim_{q} \norm{  \sqrt{\mathrm{Id} - S_0^\varepsilon} \omega _0 }_{L^2} ^2   +     \int_0^t\norm {(\mathrm{Id}- S_{-4}^\varepsilon ) \omega^\varepsilon(\tau)  }_{L^2 \cap L^{\frac{2q}{q-1}}} ^2d\tau.
\end{equation*} 
At last, observing that the same estimate holds for $\omega$ in place of $\omega^\varepsilon$, we deduce that
\begin{equation*} 
\begin{aligned}
\norm{  \sqrt{\mathrm{Id} - S_0^\varepsilon} \left(\omega^\varepsilon-\omega\right) (t )   }_{L^2} ^2
&
\lesssim_{q} \norm{  \sqrt{\mathrm{Id} - S_0^\varepsilon} \omega _0 }_{L^2} ^2   +     \int_0^t\norm {(\mathrm{Id}- S_{-4}^\varepsilon ) (\omega^\varepsilon,\omega)(\tau)  }_{L^2 \cap L^{\frac{2q}{q-1}}} ^2d\tau
\\
&\lesssim_q
\Theta_\varepsilon^{-2s}
\norm{   \omega _0 }_{\dot B_{2,\infty}^s} ^2
\\
&\quad+
\left(\int_0^t
\Theta_\varepsilon^{-2s(\tau)}d\tau\right)
\sup_{\tau\in[0,t]}\norm { (\omega^\varepsilon,\omega)(\tau)  }_{\dot{B}^{s(\tau)}_{2,\infty}  \cap   \dot{B}^{s(\tau)}_{\frac{2q}{q-1},\infty}} ^2,
\end{aligned}
\end{equation*}
for any $t\in[0,T]$. In particular, employing \eqref{Besov-ES:general}, we conclude that
\begin{equation*} 
\norm{  \sqrt{\mathrm{Id} - S_0^\varepsilon} \left(\omega^\varepsilon-\omega\right) (t )   }_{L^2} ^2
\lesssim
\Theta_\varepsilon^{-2s}
+
\int_0^t
\Theta_\varepsilon^{-2s(\tau)}d\tau,
\end{equation*}
where we chose $q$ so that $p=\frac{2q}{q-1}$ (without any loss of generality, we may assume here that $p\in (2,4)$).

Finally, evaluating that
$$
	\begin{aligned}
		\int_0^t  \Theta_\varepsilon^{-2 s(\tau)}d\tau
		&=
		\int_0^t\frac {-1}{2s'(\tau)\log\Theta_\varepsilon}
		\frac{d}{d\tau}\left(\Theta_\varepsilon^{- 2s(\tau)}\right)  d\tau
		\\
		&\leq
		\frac {-1}{2s'(t)\log\Theta_\varepsilon}
		\int_0^t
		\frac{d}{d\tau}\left(\Theta_\varepsilon^{- 2s(\tau)}\right)  d\tau
		\\
		&=\frac {-1}{2s'(t)\log\Theta_\varepsilon}
		\left(\Theta_\varepsilon^{- 2s(t)}-\Theta_\varepsilon^{- 2s}\right)
		\\
		&\lesssim\frac{\Theta_\varepsilon^{- 2s(t)}-\Theta_\varepsilon^{- 2s}}{\log\Theta_\varepsilon},
	\end{aligned}
$$
we arrive at
\begin{equation*} 
\norm{  \sqrt{\mathrm{Id} - S_0^\varepsilon} \left(\omega^\varepsilon-\omega\right) (t )   }_{L^2} ^2
\lesssim
\Theta_\varepsilon^{-2s}
+\frac{\Theta_\varepsilon^{- 2s(t)}}{\log\Theta_\varepsilon} .
\end{equation*}

\subsubsection{Optimization of $\Theta_\varepsilon$}\label{rate:section:3}

By combining the low-frequency and  high-frequency estimates established in the previous two steps, we obtain that
\begin{equation}\label{rate:finale:ES}
\norm{\omega^\varepsilon-\omega }_{L^\infty([0,T]; L^2)}
\lesssim
\Theta_\varepsilon^{-s}+
\Theta_\varepsilon   \varepsilon^{\alpha}
+\frac{\Theta_\varepsilon^{- s(T)}}{\log^\frac 12\Theta_\varepsilon}
\lesssim
\Theta_\varepsilon   \varepsilon^{\alpha}
+\frac{\Theta_\varepsilon^{- s(T)}}{\log^\frac 12\Theta_\varepsilon},
\end{equation}
where we employed that $s(T)<s$, for any fixed time $T>0$, to claim that
\begin{equation*}
	\Theta_\varepsilon^{-s}
	\lesssim
	\frac{\Theta_\varepsilon^{- s(T)}}{\log^\frac 12\Theta_\varepsilon}.
\end{equation*}
We are now going to choose a specific value for $\Theta_\varepsilon$ satisfying \eqref{theta:conditions} which will optimize the above estimate. 
To that end, notice first that the value 
 $$\Theta_\varepsilon = \left(\frac{s(T)}{\varepsilon^{\alpha}}\right)^{\frac {1}{1+ s(T)}} $$
 is the global minimum   of the function 
 $$\Theta_\varepsilon\mapsto
 \Theta_\varepsilon \varepsilon^{\alpha} +  \Theta_\varepsilon^{-s(T)}.$$
 Accordingly, if we   ignore the logarithmic correction in \eqref{rate:finale:ES}, this  choice of $\Theta_\varepsilon$ entails the rate of convergence displayed in \eqref{nonoptimal:rate}, which is not optimal.

In order to improve this rate of convergence, we need now to exploit the logarithmic decay of the last term in \eqref{rate:finale:ES}. Thus, we define
$$ \Theta_\varepsilon \bydef   \varepsilon^{-\frac{\alpha}{1+ s(T)}} |\log \varepsilon|^{-\beta},$$
where $\beta\in\mathbb{R}$ will be determined shortly. In particular, observe that
\begin{equation*}
	\log \Theta_\varepsilon\gtrsim |\log\varepsilon|,
\end{equation*}
as $\varepsilon\to 0$.
Therefore, incorporating this value into \eqref{rate:finale:ES} yields that
\begin{equation*}
	\begin{aligned}
		\norm{\omega^\varepsilon-\omega }_{L^\infty([0,T]; L^2)}
		&\lesssim
		\varepsilon^{\frac{\alpha s(T)}{1+ s(T)}}
		\left(|\log \varepsilon|^{-\beta}
		+|\log \varepsilon|^{\beta s(T)}\log^{-\frac 12}\Theta_\varepsilon
		\right)
		\\
		&\lesssim
		\varepsilon^{\frac{\alpha s(T)}{1+ s(T)}}
		\left(|\log \varepsilon|^{-\beta}
		+|\log \varepsilon|^{\beta s(T)-\frac 12}
		\right).
	\end{aligned}
\end{equation*}
At last, optimizing the value of $\beta$ by setting
\begin{equation*}
	\beta\bydef \frac{1}{2(1+s(T))}
\end{equation*}
leads to the rate of convergence claimed in the statement of Theorem \ref{thm-rate-general}, which completes its proof.
\qed

\section{Two-dimensional incompressible plasmas}
\label{section:plasma}

\subsection{Third main result}

We are now interested in two-dimensional incompressible plasmas described by the Euler--Maxwell equations
\begin{equation}\label{EM}
	\begin{cases}
		\begin{aligned}
			\text{\tiny(Euler's equation)}&&&\partial_t u +u \cdot\nabla u   =  - \nabla p + j \times B, &\div u =0,&
			\\
			\text{\tiny(Amp\`ere's equation)}&&&\frac{1}{c} \partial_t E - \nabla \times B =- j , &\div E = 0,&
			\\
			\text{\tiny(Faraday's equation)}&&&\frac{1}{c} \partial_t B + \nabla \times E  = 0 , &\div B = 0,&
			\\
			\text{\tiny(Ohm's law)}&&&j= \sigma \big( cE + P(u \times B)\big), &\div j = 0,&
		\end{aligned}
	\end{cases}
\end{equation}
where $(t,x)\in [0,\infty)\times \mathbb{R}^2$ and $P= \mathrm{Id}-\Delta^{-1}\nabla\mathrm{div}$ denotes Leray's projector onto divergence-free vector fields, supplemented with initial data $(u,E,B)_{|t=0}=(u_0,E_0,B_0)$ such that $\div u_0=\div E_0=\div B_0=0$.

The positive parameters $\sigma$ and $c$ refer to the electric conductivity and the speed of light, respectively, whereas the physical variables $u(t,x)$, $E(t,x)$ and $B(t,x)$ are vector fields with values in $\mathbb{R}^3$ which are interpreted as the fluid velocity, the electric field and the magnetic field, respectively. Finally, the field $j(t,x)$ denotes the electric current density and is determined by Ohm's law.

A straightforward calculation shows that solutions to the above system satisfy, at least formally, the energy inequality
\begin{equation}\label{energy-inequa}
	\norm {u(t)}_{L^2}^2 + \norm {E(t)}_{L^2}^2 + \norm {B(t)}_{L^2}^2
	+\frac{2}{\sigma}\int_0^t \norm {j(\tau)}_{L^2}^2 d\tau \leq \mathcal{E}_0^2,
\end{equation}
for all $t\geq 0$, where we denote
\begin{equation*}
	\mathcal{E}_0 \bydef \left( \norm {u_0}_{L^2}^2 + \norm {E_0}_{L^2}^2 + \norm {B_0}_{L^2}^2 \right)^{\frac{1}{2}}.
\end{equation*}
Current methods fail to produce the existence of solutions to \eqref{EM} for initial data merely satisfying that $\mathcal{E}_0$ is finite. In general, when studying solutions of \eqref{EM}, it is therefore natural to suppose that $(u_0,E_0,B_0)$ enjoys some suitable enhanced regularity.

Henceforth, we further assume that the fields $u$, $E$ and $B$ satisfy the two-dimensional normal structure
\begin{equation}\label{structure:2dim}
	u(t,x)=
	\begin{pmatrix}
		u_1(t,x)\\u_2(t,x)\\0
	\end{pmatrix},
	\qquad
	E(t,x)=
	\begin{pmatrix}
		E_1(t,x)\\E_2(t,x)\\0
	\end{pmatrix}
	\qquad\text{and}\qquad
	B(t,x)=
	\begin{pmatrix}
		0\\0\\b(t,x)
	\end{pmatrix}.
\end{equation}
This structure played a significant role in our recent work \cite{arsenio:houamed:2022}, where we established the global existence and uniqueness of solutions to \eqref{EM}, with the structure \eqref{structure:2dim}, provided the speed of light $c$ is sufficiently large when compared to the size of the initial data in suitable norms. The condition therein relating $c$ to the initial data can be interpreted as a strengthening of the physical principle that the velocity of the fluid cannot exceed the speed of light. A precise reformulation of the well-posedness result from \cite{arsenio:houamed:2022} is contained in the following theorem.

\begin{thm}[\cite{arsenio:houamed:2022}]\label{thm-EM0}
	Let $s$ be any real number in $(\frac 74,2)$ and consider a bounded family of initial data
	\begin{equation*}
		\big\{(u_0^c,E_0^c,B_0^c)\big\}_{c>0}\subset
		\left(H^1\times H^s \times H^s \right)(\mathbb{R}^2),
	\end{equation*}
	with $\div u_0^c=\div E_0^c=\div B_0^c$ and the two-dimensional normal structure \eqref{structure:2dim}, such that the initial vorticities $\omega_0^c\bydef \nabla\times u_0^c$ form a bounded family of $L^\infty(\mathbb{R}^2)$.
	
	There is a constant $c_0>0$, such that, for any speed of light $c\in(c_0,\infty)$, there is a global weak solution $(u^c,E^c,B^c)$ to the two-dimensional Euler--Maxwell system \eqref{EM}, with the normal structure \eqref{structure:2dim} and initial data $(u_0^c,E_0^c,B_0^c)$, satisfying the energy inequality \eqref{energy-inequa} and enjoying the additional regularity
	\begin{equation}\label{propagation:damping:2}
		\begin{gathered}
			u^c\in L^\infty(\mathbb{R}^+; L^\infty\cap H^1),
			\quad
			\omega^c\bydef \nabla\times u^c \in L^\infty(\mathbb{R}^+;L^\infty),
			\quad
			(E^c,B^c)\in L^\infty(\mathbb{R}^+; H^s),
			\\
			(cE^c,B^c)\in L^2(\mathbb{R}^+;\dot H^1\cap\dot H^s),
			\quad
			(E^c,B^c)\in L^2(\mathbb{R}^+;\dot W^{1,\infty}),
			\quad
			j^c\in L^2(\mathbb{R}^+;L^2\cap L^\infty),
		\end{gathered}
	\end{equation}
	where $j^c\bydef \sigma \big( cE^c + P(u^c \times B^c)\big)$.
	It is to be emphasized that the bounds in \eqref{propagation:damping:2} are uniform in $c\in(c_0,\infty)$, for any bounded family of initial data.
	
	Furthermore, for each $c\in(c_0,\infty)$, the solution $(u^c,E^c,B^c)$ is unique in the space of all solutions $(\bar u, \bar E,\bar B)$ to the Euler--Maxwell system \eqref{EM} satisfying the bounds, locally in time,
	\begin{equation*}
		(\bar u, \bar E,\bar B)\in L^\infty_tL^2_x,
		\qquad \bar u \in L^{2}_tL^\infty_x,
		\qquad \bar j \in L^{2}_{t,x},
	\end{equation*}
	and having the same initial data.
\end{thm}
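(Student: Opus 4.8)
The plan is to establish Theorem~\ref{thm-EM0} along classical lines: construct smooth approximate solutions, derive a priori bounds uniform both in the speed of light $c>c_0$ and in the regularization, pass to the limit by compactness, and prove uniqueness in the stated weak class by an energy method. The structural observation driving the whole argument is obtained by eliminating $j$ through Ohm's law $j=\sigma\big(cE+P(u\times B)\big)$ and exploiting the two-dimensional normal structure \eqref{structure:2dim}. With $B=(0,0,b)$, the divergence-free conditions yield $\nabla\times(j\times B)=-(j\cdot\nabla)B$ and $\nabla\times(u\times B)=-(u\cdot\nabla)B$, so that the vorticity formulation of the Euler equation reads $\partial_t\omega+u\cdot\nabla\omega=-\,j\cdot\nabla b$, while the magnetic field solves the damped wave equation
\[
\frac{1}{c^{2}}\,\partial_t^{2} b+\sigma\big(\partial_t b+u\cdot\nabla b\big)-\Delta b=0.
\]
For $c$ large this is a singular perturbation of the induction equation $\sigma(\partial_t b+u\cdot\nabla b)-\Delta b=0$ of two-dimensional resistive magnetohydrodynamics, and the crucial point is that the damping coefficient $\sigma c^{2}$ has exactly the size needed to produce parabolic-type smoothing for $b$ with constants that do not degenerate as $c\to\infty$. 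Note as well that the magnetic forcing $u\cdot\nabla b$ is of \emph{transport} type, one derivative below a generic product, which is what makes it possible to close the estimates with only the $H^{1}$ control of $u$ furnished by Yudovich's theory.

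The a priori estimates would then be carried out in the following order. First, the energy identity \eqref{energy-inequa} gives $(u,E,B)\in L^\infty_tL^2$ and $j\in L^2_{t,x}$, uniformly in $c$. Second, an $\dot H^1$ energy estimate on the first-order Maxwell block, in which the quadratic terms cancel because $\langle\curl B,E\rangle=\langle B,\curl E\rangle$ and Ohm's law turns the surviving term into the dissipation $\frac{1}{\sigma}\norm{j}_{\dot H^1}^2$ plus a product term controlled by paraproduct estimates, yields $j,cE\in L^2_t\dot H^1$. Testing the damped wave equation against $|D|^{2(s-1)}$-type multipliers (the genuinely hyperbolic term $\frac{1}{c^{2}}\partial_t^{2}b$ being $O(c^{-2})$, hence harmless for $c$ large, while the strong damping $\sigma c^{2}\partial_t b$ together with $-\Delta b$ yields a parabolic gain of two spatial derivatives in $L^2_t$) propagates the $H^s$ regularity of $b$ and gives $b\in L^2_t\dot H^{s+1}$; here the forcing is only needed at regularity $\dot H^{s-1}$, which is essential because the transport term $u\cdot\nabla b$ can then be estimated using merely the $\dot H^{s-1}$ norm of $u$, available since $s-1<1$. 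As $s>\frac74>1$, this delivers $\nabla b\in L^2_tL^\infty$, and Ampère's and Ohm's laws then provide $j\in L^2_tL^\infty$ and the remaining bounds in \eqref{propagation:damping:2}, all uniform in $c$. Third, the vorticity equation $\partial_t\omega+u\cdot\nabla\omega=-\,j\cdot\nabla b$ is a transport equation, whence $\norm{\omega}_{L^\infty_tL^\infty}\le\norm{\omega_0}_{L^\infty}+\norm{j\cdot\nabla b}_{L^1_tL^\infty}$, the last term being finite by the Cauchy--Schwarz inequality in time; the Yudovich/Biot--Savart machinery then upgrades this to $u\in L^\infty_t(L^\infty\cap H^1)$. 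These bounds are coupled, and the hypothesis $c>c_0$, with $c_0$ depending on the size of the initial data, is precisely what allows the resulting nonlinear bootstrap to close for all time, by absorbing both the discrepancy between the damped wave propagator and the heat semigroup and the quadratic feedback of the Lorentz force into the Euler equation; the window $s\in(\frac74,2)$ is the one for which the accompanying product and commutator estimates close while preserving the embedding $\dot H^s\hookrightarrow L^\infty$ used above.

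With these uniform bounds in hand, the passage to the limit in the approximation is routine for the linear terms; the quadratic terms $u\cdot\nabla u$, $u\times B$ and $j\times B$ require strong compactness of $u$ in $L^2_{\mathrm{loc}}([0,\infty)\times\mathbb{R}^2)$, which follows from the uniform $L^\infty_tH^1$ bound together with an $L^2_tH^{-1}_{\mathrm{loc}}$ bound on $\partial_t u$ via the Aubin--Lions lemma, the electromagnetic fields converging weakly-$*$ and $j$ weakly in $L^2_{t,x}$. For uniqueness, one takes two solutions in the stated class, with $(u_1,E_1,B_1)$ the one just constructed---so that $B_1\in L^\infty_{t,x}$ because $s>1$---and runs an $L^2$ energy estimate on the differences $(\widetilde u,\widetilde E,\widetilde B,\widetilde j)$. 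The Euler contribution produces the Yudovich term $|\langle\widetilde u\cdot\nabla u_2,\widetilde u\rangle|\lesssim p\,\norm{\omega_2}_{L^2\cap L^\infty}\norm{\widetilde u}_{L^2}^{2-2/p}\norm{\widetilde u}_{L^\infty}^{2/p}$, hence a log-Lipschitz modulus of continuity; the Maxwell contribution produces the dissipation $\frac{1}{\sigma c}\norm{\widetilde j}_{L^2}^2$ on the left-hand side, which absorbs the Lorentz cross terms $\langle\widetilde j\times B_1,\widetilde u\rangle$ (using $B_1\in L^\infty$) and, after integration in time, the term $\langle j_2\times\widetilde B,\widetilde u\rangle$ (using $\widetilde u\in L^2_tL^\infty$ and $j_2\in L^2_{t,x}$). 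Osgood's lemma then forces $(\widetilde u,\widetilde E,\widetilde B)\equiv 0$.

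The main obstacle is the second step: extracting the higher-order electromagnetic regularity \emph{uniformly} as $c\to\infty$. A direct energy estimate on the Maxwell system only sees the weak dissipation $\frac{1}{\sigma c}\norm{j}_{\dot H^s}^2$ and loses powers of $c$ when one tries to control $\nabla E$ or to recover $\nabla b$ directly from Ampère's law; it is exactly the damped wave reformulation, the transport structure of the magnetic forcing, and the correct choice of multipliers that circumvent this, at the cost of the hypothesis $c>c_0$. Once this is in place, everything downstream---the vorticity bound, the compactness argument, and the uniqueness estimate---is a relatively standard adaptation of Yudovich's theory to a hydrodynamic system carrying an electromagnetic forcing.
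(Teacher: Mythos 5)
Theorem~\ref{thm-EM0} is stated in the paper as a citation of \cite{arsenio:houamed:2022}; the present paper contains no proof of it, so your proposal can only be measured against the statement itself, not against an argument given here. That said, your reformulation of the system is correct: the normal structure \eqref{structure:2dim} does reduce the vorticity equation to $\partial_t\omega+u\cdot\nabla\omega=-j\cdot\nabla b$ and does give the damped wave equation $\frac{1}{c^2}\partial_t^2 b+\sigma(\partial_t b+u\cdot\nabla b)-\Delta b=0$, and viewing the latter as a singular perturbation of the resistive induction equation is the right guiding idea.

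The gap is in the claim that this yields $b\in L^2_t\dot H^{s+1}$, uniformly in $c$, from data merely in $H^s$. Note first that the theorem asserts only $(cE^c,B^c)\in L^2_t(\dot H^1\cap\dot H^s)$, one derivative below what you claim, and this is not an oversight. The damped wave operator is parabolic only at frequencies $|\xi|\lesssim c$; for $|\xi|\gtrsim c$ the characteristic roots of $\frac{1}{c^2}\lambda^2+\sigma\lambda+|\xi|^2=0$ are $\lambda_\pm=-\frac{\sigma c^2}{2}\pm\frac{ic}{2}\sqrt{4|\xi|^2-\sigma^2 c^2}$, so the evolution is oscillatory with decay $e^{-\sigma c^2 t/2}$, giving $\int_0^\infty|\hat b(t,\xi)|^2\,dt\sim c^{-2}|\hat b_0(\xi)|^2$. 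For a frequency packet at scale $R$ this yields $\norm{b}_{L^2_t\dot H^{s+1}}\sim(R/c)\norm{b_0}_{\dot H^s}$, which is unbounded as $R/c\to\infty$; the gain of one full derivative in $L^2_t$ simply does not hold uniformly in $c$ from $H^s$ data. Equivalently, the multiplier $\partial_t b+\alpha b$ that recovers the parabolic dissipation $\norm{\nabla b}_{L^2_t L^2}^2$ produces a cross term $\frac{\alpha}{c^2}\langle\partial_t^2 b,b\rangle$ that cannot be closed uniformly, because Faraday's law gives $\partial_t b|_{t=0}=-c(\nabla\times E_0)_3=O(c)$ and, reading off the equation at $t=0$, $\partial_t^2 b|_{t=0}=O(c^3)$ for generic (non--well-prepared) data. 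Since your derivations of $\nabla b\in L^2_tL^\infty$, $j^c\in L^2_tL^\infty$, and hence $\norm{\omega^c}_{L^\infty_{t,x}}\le\norm{\omega_0^c}_{L^\infty}+\norm{j^c\cdot\nabla b^c}_{L^1_tL^\infty}$ all rest on that $\dot H^{s+1}$ smoothing, they are not established; the $L^2_t\dot W^{1,\infty}$ and $L^2_tL^\infty$ bounds in \eqref{propagation:damping:2} require a genuinely different argument (which the paper's Remark after Theorem~\ref{thm-EM0} locates, for the $j^c$ bound, in Section 3.8 of \cite{arsenio:houamed:2022}), built instead on the available $L^\infty_tH^s$ and $L^2_t\dot H^s$ control combined with paradifferential product laws such as Lemma~\ref{lemma:paradiff}.
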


\begin{rem}
	We should note that the uniform bound $j^c\in L^2(\mathbb{R}^+;L^\infty)$ in \eqref{propagation:damping:2} cannot be found in the statements of the theorems from \cite{arsenio:houamed:2022}. However, it is established explicitely in Section 3.8 therein, as an intermediate step of the proof of the uniform bound $\omega^c\in L^\infty(\mathbb{R}^+;L^\infty)$.
\end{rem}

In view of the uniform bounds \eqref{propagation:damping:2}, it is possible to establish the convergence
\begin{equation*}
	(u^c,E^c,B^c)\stackrel{c\to\infty}{\longrightarrow}(u,0,B),
\end{equation*}
in the strong topology of $L^2_{\mathrm{loc}}(\mathbb{R}^+\times\mathbb{R}^2)$, of the solution from Theorem \ref{thm-EM0}, where $(u,B)$ is the unique solution to the magnetohydrodynamic system
\begin{equation}\label{mhd}
	\left\{
	\begin{aligned}
		&\partial_t u +u \cdot\nabla u = -\nabla p , &\div u =0,&
		\\ 
		&\partial_t B + u \cdot\nabla B - \frac{1}{\sigma}\Delta B = 0,&&
	\end{aligned}
	\right.
\end{equation}
corresponding to the initial data $(u_0,B_0)$ and satisfying the bounds
\begin{equation*}
	\begin{gathered}
		u\in L^\infty(\mathbb{R}^+; L^\infty\cap H^1),
		\quad
		\omega\bydef \nabla\times u \in L^\infty(\mathbb{R}^+;L^\infty),
		\\
		B\in L^\infty(\mathbb{R}^+; H^1)
		\cap
		L^2(\mathbb{R}^+;\dot H^1\cap\dot H^2\cap\dot W^{1,\infty}).
	\end{gathered}
\end{equation*}
This convergence result and the ensuing bounds follow from a standard compactness argument and is formulated in Corollary 1.2 from \cite{arsenio:houamed:2022}.

Our aim is now to improve on this asymptotic characterization of \eqref{EM} by showing that the convergence holds globally in space, uniformly in time, in adequate functional spaces. Our main global convergence result for inviscid two-dimensional plasmas is contained in the following theorem.

\begin{thm}\label{thm-EM}
	Let $\big\{(u_0^c,E_0^c,B_0^c)\big\}_{c>0}$ be a bounded family of initial data as required in Theorem \ref{thm-EM0}, for some fixed $s\in(\frac 74,2)$, and consider the unique solution $(u^c,E^c,B^c)$ to \eqref{EM} provided by the same theorem, for large $c$. If $ u_0^c $ converges to $u_0$ in ${H}^1(\mathbb{R}^2)$ and $(E_0^c,B_0^c) $ converges to $(0,B_0)$ in $  L^2(\mathbb{R}^2) $, then  $(u^c, B^c)$ converges strongly to $(u,B)$, the unique global solution of \eqref{mhd}, on any finite time interval $[0,T]$, in the sense that
	$$
	\lim_{c\rightarrow \infty}  \norm {u^c-u }_{L^\infty([0,T]; H^1(\mathbb{R}^2))}  = 0,
	$$
	and 
	$$
	\lim_{c\rightarrow \infty}
	\left(\norm {B^c-B }_{L^\infty([0,T]; L^2(\mathbb{R}^2))}
	+ \norm {B^c-B }_{L^2([0,T]; \dot{H}^1(\mathbb{R}^2))} \right)= 0.
	$$
	Furthermore, the electric current density $j^c=\sigma \big( cE^c + P(u^c \times B^c)\big)$ converges strongly to $\nabla\times B$ in the sense that
	\begin{equation*}
		\lim_{c\to\infty}\norm {j^c-\nabla\times B }_{L^2([0,T]; L^2(\mathbb{R}^2))}=0.
	\end{equation*}
\end{thm}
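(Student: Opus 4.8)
The plan is to split the problem into an electromagnetic part, handled through a careful energy analysis of Maxwell's system, and a hydrodynamic part, handled through the compactness extrapolation method of Section~\ref{subsection.THM2}. Throughout, one uses the uniform bounds \eqref{propagation:damping:2}, the weak convergences $u^c\rightharpoonup u$, $E^c\rightharpoonup 0$, $B^c\rightharpoonup B$ and $j^c\rightharpoonup\nabla\times B$, and the fact that $(u,B)$ solves \eqref{mhd}, all of which are recorded in \cite{arsenio:houamed:2022}. The key structural remark is that, because of the two-dimensional normal structure \eqref{structure:2dim}, the velocity $u^c$ is a Yudovich solution of the Euler system \eqref{Euler*} forced by $g^c\bydef P(j^c\times B^c)$, whose vorticity source is $\curl g^c=-j^c\cdot\nabla b^c$ (writing $B^c=(0,0,b^c)$), whereas $u$ solves \eqref{Euler} with no source at all, since the limiting Lorentz force $(\nabla\times B)\times B=-\tfrac12\nabla|B|^2$ is a pure gradient. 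Using \eqref{propagation:damping:2}, one checks that $g^c$ is uniformly bounded in $L^1([0,T];H^1)$ and $\curl g^c$ in $L^1([0,T];L^\infty)$.

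\emph{Step 1: the electromagnetic fields.} Run a relative (modulated) energy estimate for $\tfrac12\big(\norm{u^c-u}_{L^2}^2+\norm{E^c}_{L^2}^2+\norm{B^c-B}_{L^2}^2\big)$. The antisymmetry of the Lorentz force combined with Ohm's law produces the dissipation $-\tfrac1\sigma\norm{j^c-\nabla\times B}_{L^2}^2$; the nonlinear transport term $\int_{\mathbb{R}^2}\nabla u:(u^c-u)^{\otimes2}$ is absorbed exactly as in Section~\ref{section:recall}, via the logarithmic Biot--Savart inequality and Osgood's lemma; and the remaining terms are governed by the displacement current $\tfrac1c\partial_t E^c=\nabla\times B^c-j^c$, which one controls by exploiting Amp\`ere's and Faraday's laws --- equivalently, the telegrapher-type equation $\tfrac1{c^2}\partial_t^2 B^c+\sigma\partial_t B^c-\Delta B^c=\sigma\,\nabla\times P(u^c\times B^c)$ satisfied by $B^c$ --- together with \eqref{propagation:damping:2}. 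Closing the estimate yields $u^c\to u$ in $L^\infty([0,T];L^2)$, $B^c\to B$ in $L^\infty([0,T];L^2)\cap L^2([0,T];\dot H^1)$ and $j^c\to\nabla\times B$ in $L^2([0,T];L^2)$; interpolating the $L^2_tL^2$ convergence of $B^c-B$ against its uniform higher-regularity bounds from \eqref{propagation:damping:2} also gives $B^c\to B$ in $L^2([0,T];L^\infty)$ and $\nabla b^c\to\nabla b$ in $L^2([0,T];L^2)$.

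\emph{Step 2: the force disappears, and the velocity converges.} Decompose
\[
\curl g^c=-(j^c-\nabla\times B)\cdot\nabla b^c-(\nabla\times B)\cdot(\nabla b^c-\nabla b)-(\nabla\times B)\cdot\nabla b,
\]
where the last term is identically zero because, for a field $B=(0,0,b)$, one has $(\nabla\times B)\cdot\nabla b=\partial_2 b\,\partial_1 b-\partial_1 b\,\partial_2 b=0$ --- this is precisely why $u$ is unforced in the limit. The first two terms tend to $0$ in $L^1([0,T];L^2)$ by H\"older in space and Cauchy--Schwarz in time, using Step~1 and the $L^2_tL^\infty$ bounds on $\nabla b^c$ and on $\nabla\times B$. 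Hence $g^c\to0$ in $L^1([0,T];H^1)$ while $\curl g^c$ stays bounded in $L^1([0,T];L^\infty)$; since moreover $u_0^c\to u_0$ in $H^1$, Theorem~\ref{thm2} applies directly and gives $u^c\to u$ in $L^\infty([0,T];H^1)$. (Equivalently, one repeats the argument of Section~\ref{subsection.THM2}: $\omega^c=\curl u^c$ solves $\partial_t\omega^c+u^c\cdot\nabla\omega^c=\curl g^c$ with $\omega^c$ uniformly bounded in $L^\infty_t(L^2\cap L^\infty)$ by \eqref{propagation:damping:2}, so the dyadic energy identity \eqref{energy-HF}, the bounds on $\mathcal{J}_1,\mathcal{J}_2,\mathcal{J}_3$ from Lemmas~\ref{standard:lemma}--\ref{T3-ES}, Gr\"onwall's lemma, and the compactness extrapolation mechanism of Lemma~\ref{lemma:extrapolation} with $\Theta_c$ chosen so that $\Theta_c\to\infty$ and $\Theta_c\norm{u^c-u}_{L^\infty_tL^2}\to0$ yield $\omega^c\to\omega$ in $L^\infty_tL^2$.) Collecting this with Step~1 delivers all the asserted convergences.

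\emph{Main obstacle.} The crux is Step~1, the singular limit $c\to\infty$ in Maxwell's equations: the electric field is slaved at order $1/c$ but its dynamics is of order $1$, so the displacement current $\nabla\times B^c-j^c$ must be shown to vanish even though Maxwell's system, unlike its parabolic limit, enjoys no smoothing away from $c=\infty$; all the needed decay has to be extracted from the dissipation in Ohm's law, balanced against the fast $O(c)$ electromagnetic oscillations and the thin initial layer near $t=0$. This is the ``fine understanding of Maxwell's system'' referred to in the abstract.
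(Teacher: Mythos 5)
Your high-level architecture agrees with the paper's: both arguments reduce to (i) showing the displacement current $\nabla\times B^c-j^c$ vanishes as $c\to\infty$, (ii) applying Theorem~\ref{thm2} once the forcing is seen to vanish in $L^1_tH^1$, and (iii) a transport--diffusion estimate for the magnetic field. Your observations that the limiting Lorentz force $(\nabla\times B)\times B$ is a pure gradient and that $\curl g^c$ only involves $j^c-\nabla\times B^c$ (since $(\nabla\times B^c)\cdot\nabla b^c\equiv 0$ for the normal structure) are exactly the paper's Step~1 of Section~\ref{sec.proof THM}. Your Step~2 is correct and essentially matches the paper, although the paper's decomposition is slightly more economical: it writes $\curl g^c=(\nabla\times B^c-j^c)\cdot\nabla B^c$ directly and does not need to pass through the intermediate convergence $\nabla b^c\to\nabla b$.

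The genuine gap is in Step~1. The paper does not run a coupled modulated energy on $(u^c-u,E^c,B^c-B)$. Instead, it isolates the Maxwell part completely in Proposition~\ref{prop:CV}: differentiating Amp\`ere's equation in time and inserting Faraday's and Ohm's laws gives the damped wave equation
\begin{equation*}
\frac1c\partial_t^2 E^c - c\Delta E^c + \sigma c\,\partial_t E^c = -\sigma\,\partial_t P(u^c\times B^c),
\end{equation*}
and a weighted $\dot H^{\eta-1}$ energy estimate on this equation \emph{alone}, closed by the paradifferential laws for normal fields (Lemma~\ref{lemma:paradiff}) and the uniform bounds \eqref{propagation:damping:2}, yields $\|\nabla\times B^c - j^c\|_{L^2_tL^2}\lesssim\|E_0^c\|_{L^2}+c^{-1}$. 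That estimate is the linchpin of the whole proof; everything else is fed from it. Your proposal replaces this decoupled estimate with a single modulated energy, and the replacement does not close. Concretely, if one actually computes the time derivative of $\frac12(\|u^c-u\|_{L^2}^2+\|E^c\|_{L^2}^2+\|B^c-B\|_{L^2}^2)$, inserting $cE^c=\tfrac1\sigma j^c-P(u^c\times B^c)$ from Ohm's law and rewriting the diffusion term $\tfrac1\sigma\int\nabla B:\nabla\widetilde B^c$ via $\nabla\times B^c=j^c+(\nabla\times B^c-j^c)$, the claimed dissipation emerges only up to the residual
\begin{equation*}
\frac1\sigma\int_{\mathbb{R}^2}(\nabla\times B)\cdot(\nabla\times B^c - j^c)\,dx,
\end{equation*}
which is $O(1)$ uniformly in $c$ and is \emph{not} controlled by the dissipation $-\tfrac1\sigma\|j^c-\nabla\times B\|_{L^2}^2$; it can only be dispatched once one already knows $\nabla\times B^c-j^c\to0$. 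For the same reason, $B^c\to B$ in $L^2_t\dot H^1$ does not follow from the modulated energy alone, since $\nabla\times\widetilde B^c=(j^c-\nabla\times B)-(j^c-\nabla\times B^c)$ and the second difference needs a separate bound. You flag this as the ``main obstacle'' and hint at the telegrapher equation for $B^c$, but that equation only reproduces the uniform bound $cE^c\in L^2_t\dot H^\eta$ already contained in \eqref{propagation:damping:2}; it does not directly control $\tfrac1c\partial_t E^c=\nabla\times B^c-j^c$. The missing idea is precisely the damped wave energy estimate on $E^c$ (the field that actually tends to zero), multiplied against $\partial_t E^c$, which is Proposition~\ref{prop:CV}. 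Without it, Step~1 is circular and the proof is not complete.
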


\begin{rem}
	It is possible to quantify the convergence of $u^c$ and $B^c$ in the preceding theorem with a rate $O(c^{-\alpha})$, for some $\alpha>0$. This will be clear in the proof of the theorem, below.
	Moreover, the electric field $E^c$ vanishes in $L^2(\mathbb{R}^+;\dot H^1)$ with a rate $O(c^{-1})$ due to the uniform bounds given in \eqref{propagation:damping:2}.
\end{rem}

\begin{rem}
	In the statements of the above results, the restriction on the range of the regularity parameter $s\in (\frac 74, 2)$ is only necessary in the construction of global solutions given in Theorem \ref{thm-EM0}. However, Theorem \ref{thm-EM} would hold for the wider range $s\in [1,2]$ provided solutions to \eqref{EM} satisfying the uniform bounds \eqref{propagation:damping:2} could be constructed for that range.
\end{rem}

The proof of Theorem \ref{thm-EM} is given in Section \ref{sec.proof THM}, below. It relies on a self-contained asymptotic analysis of Amp\`ere's equation, which is detailed in the next section.

\subsection{Asymptotic analysis of Amp\`ere's equation}\label{sec:ampere equation}

A formal asymptotic analysis of Amp\`ere's equation readily shows that $j^c$ converges to $\nabla\times B$, in the regime $c\to\infty$. The proof of our main result Theorem \ref{thm-EM} hinges upon a refined asymptotic analysis of the same equation. More precisely, the following proposition provides a key estimate on the distance between $\nabla\times B^c$ and $j^c$, as $c\to\infty$.

\begin{prop}\label{prop:CV}
	Let $\big\{(u_0^c,E_0^c,B_0^c)\big\}_{c>0}$ be a bounded family of initial data as required in Theorem \ref{thm-EM0}, for some fixed $s\in(\frac 74,2)$, and consider the unique solution $(u^c,E^c,B^c)$ provided by the same theorem, for $c>c_0$. Then, one has the estimate
	\begin{equation*}
		\norm {\nabla\times B^c-j^c}_{L^2(\mathbb{R}^+;\dot{H}^{\eta-1})}
		\lesssim
		\frac 1c \norm{\nabla\times B^c_0-j^c_0}_{\dot{H}^{\eta-1}}
		+\frac 1c\norm{E^c_0}_{\dot{H}^\eta}
		+\frac {C_*}{c^2},
	\end{equation*}
	for every $\eta\in [1,s]$ and $c>c_0$, where the initial electric current density is given by
	\begin{equation*}
		j_0^c\bydef \sigma\left( cE_0^c + P(u_0^c\times B_0^c) \right)
	\end{equation*}
	and $C_*>0$ depends on the family of initial data.
\end{prop}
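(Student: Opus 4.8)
The plan is to reduce everything to a single scalar evolution and to close a \emph{weighted energy estimate} whose dissipation is provided by the large parameter $c$. First I would combine Ampère's and Ohm's equations to see that the electric field solves the damped equation
\[
\partial_t E^c+\sigma c^2 E^c=c\,F^c,\qquad F^c\bydef \nabla\times B^c-\sigma P(u^c\times B^c),
\]
while Ampère's equation also gives the identity $\nabla\times B^c-j^c=\tfrac1c\partial_t E^c$. Differentiating the damped equation in time and substituting Faraday's equation in the form $\partial_t\nabla\times B^c=\nabla\times\partial_t B^c=c\Delta E^c$ yields
\[
\partial_t^2 E^c+\sigma c^2\partial_t E^c=c^2\Delta E^c-\sigma c\,\partial_t P(u^c\times B^c).
\]

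The core of the argument is the $\dot H^{\eta-1}$ energy identity obtained by taking the $\dot H^{\eta-1}$ inner product of this equation with $\partial_t E^c$. The apparently dangerous term $c^2\langle\Delta E^c,\partial_t E^c\rangle_{\dot H^{\eta-1}}$ — which cannot be treated as a bounded forcing, since $cE^c$ is controlled only up to $\dot H^{s}$ while $\eta+1>s$ — integrates by parts in space into the \emph{exact} time derivative $-\tfrac{c^2}{2}\tfrac{d}{dt}\|E^c\|_{\dot H^{\eta}}^2$, hence costs only an initial value and no running norm. This produces
\[
\frac12\frac{d}{dt}\Big(\|\partial_t E^c\|_{\dot H^{\eta-1}}^2+c^2\|E^c\|_{\dot H^{\eta}}^2\Big)+\sigma c^2\|\partial_t E^c\|_{\dot H^{\eta-1}}^2=-\sigma c\,\big\langle\partial_t P(u^c\times B^c),\partial_t E^c\big\rangle_{\dot H^{\eta-1}}.
\]
Integrating over $\mathbb R^+$, discarding the nonnegative terminal terms, inserting $\partial_t E^c=c(\nabla\times B^c-j^c)$ and $\partial_t E^c|_{t=0}=c(\nabla\times B_0^c-j_0^c)$, and bounding the right-hand side by Cauchy--Schwarz and Young's inequality (the powers of $c$ are bookkept so that the $\sigma c^2\|\partial_t E^c\|_{\dot H^{\eta-1}}^2$ term absorbs the cross term), I expect to obtain exactly
\[
\|\nabla\times B^c-j^c\|_{L^2(\mathbb R^+;\dot H^{\eta-1})}\lesssim\frac1c\|\nabla\times B_0^c-j_0^c\|_{\dot H^{\eta-1}}+\frac1c\|E_0^c\|_{\dot H^{\eta}}+\frac1{c^2}\big\|\partial_t P(u^c\times B^c)\big\|_{L^2(\mathbb R^+;\dot H^{\eta-1})}.
\]

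It then remains to show $\|\partial_t P(u^c\times B^c)\|_{L^2(\mathbb R^+;\dot H^{\eta-1})}\lesssim C_*$, using only the uniform bounds \eqref{propagation:damping:2}. Writing $\partial_t(u^c\times B^c)=\partial_t u^c\times B^c+u^c\times\partial_t B^c$, the second term is handled via Faraday's equation, $\partial_t B^c=-\nabla\times(cE^c)$ with $cE^c\in L^2(\mathbb R^+;\dot H^1\cap\dot H^{s})$, together with the fact that $u^c\in L^\infty(\mathbb R^+;L^\infty\cap H^1)$ is almost Lipschitz (because $\omega^c\in L^\infty_{t,x}$); the first term is treated through Euler's equation, $\partial_t u^c=-P(u^c\cdot\nabla u^c)+P(j^c\times B^c)$, using $j^c\in L^2(\mathbb R^+;L^2\cap L^\infty)$, $B^c\in L^\infty(\mathbb R^+;H^s)$ and the Yudovich-type control of $u^c$. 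The main technical obstacle lies precisely here: for $\eta>1$ neither $\partial_t u^c$ nor $u^c\cdot\nabla\omega^c$ belongs to $\dot H^{\eta-1}$ on its own, so one must borrow the missing smoothness from the factor $B^c\in H^s$ with $s>1$ through paraproduct estimates — this is where the range $\eta\in[1,s]$ enters — after which the whole argument is a clean dissipative estimate, with the $c^{-2}$ gain coming solely from the two reserved powers of $c$.
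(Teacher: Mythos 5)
Your proposal matches the paper's proof essentially step by step: you derive the same damped wave equation for $E^c$ (your first-order intermediate equation is just a presentational device), perform the same $\dot H^{\eta-1}$ energy estimate tested against $\partial_t E^c$ (yours is the paper's energy multiplied by $c$), use Ampère's identity $\partial_t E^c = c(\nabla\times B^c-j^c)$ in exactly the same way to convert bounds on $\partial_t E^c$ into the stated estimate, and reduce to showing $\partial_t P(u^c\times B^c)\in L^2_t\dot H^{\eta-1}$ via the same decomposition through Euler's and Faraday's equations into the three pieces $\mathcal{I}_1,\mathcal{I}_2,\mathcal{I}_3$. The only place you stay more informal than the paper is in the final paraproduct step, where the paper invokes the specific normal-structure product laws of Lemma \ref{lemma:paradiff} rather than generic paraproduct estimates; your identification of the obstruction (needing to borrow regularity from $B^c\in H^s$ with $s>1$, hence the constraint $\eta\le s$) is correct and is precisely what those product laws encode.
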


\begin{rem}
	Notice that the family of initial data considered in the preceding proposition is bounded uniformly in the spaces
	\begin{equation*}
		u_0^c\in L^2,
		\qquad
		E_0^c\in H^1,
		\qquad
		B_0^c\in \dot H^1\cap L^\infty.
	\end{equation*}
	Therefore, setting $\eta=1$, the above result provides us with the estimate
	\begin{equation}\label{E-NL:es}
		\norm {\nabla\times B^c-j^c}_{L^2(\mathbb{R}^+;L^2)}
		\lesssim
		\norm{E^c_0}_{L^2}
		+\frac 1c,
	\end{equation}
	which shows that $\nabla\times B^c-j^c$ vanishes in the topology of $L^2$ if the initial electric field converges strongly to zero.
\end{rem}

\begin{proof}
	First applying a time derivative to Amp\`ere's equation and combining it with Faraday's equation and Ohm's law yields the damped wave equation 
	\begin{equation*}
		\frac{1}{c} \partial_t^2E^c - c \Delta E^c + \sigma c \partial_t E^c = -\sigma \partial_t P (u^c\times B^c).
	\end{equation*}
	Then, performing a classical energy estimate by multiplying the above damped wave equation by $(-\Delta)^{\eta-1}\partial_t E^c$ and integrating in space (this can be done rigorously by employing a Littlewood--Paley decomposition, for instance), we find that
	\begin{equation*}
		\begin{aligned}
		\frac 12	\frac{d}{dt}\left( \frac{1}{c} \norm{\partial_t E^c (t)}_{\dot{H}^{\eta-1}}^2
			+c \norm{E^c (t)}_{\dot{H}^\eta}^2 \right)
			&+\sigma c\norm {\partial_t E^c(t)}_{\dot{H}^{\eta-1}}^2
			\\
			&\leq \sigma
			\norm{\partial_t P (u^c\times B^c)(t)}_{\dot{H}^{\eta-1}}
			\norm{\partial_tE^c(t)}_{\dot{H}^{\eta-1}}
			\\
			&\leq \frac{\sigma}{2c}
			\norm{\partial_t P (u^c\times B^c)(t)}_{\dot{H}^{\eta-1}}^2
			+\frac{\sigma c}2\norm{\partial_tE^c(t)}_{\dot{H}^{\eta-1}}^2,
		\end{aligned}
	\end{equation*}
	whereby
	\begin{equation*}
	 	\frac{d}{dt}\left( \frac{1}{c} \norm{\partial_t E^c (t)}_{\dot{H}^{\eta-1}}^2
		+c \norm{E^c (t)}_{\dot{H}^\eta}^2 \right)
		+ \sigma c \norm {\partial_t E^c(t)}_{\dot{H}^{\eta-1}}^2
		\leq \frac{\sigma}{ c}
		\norm{\partial_t P (u^c\times B^c)(t)}_{\dot{H}^{\eta-1}}^2.
	\end{equation*}
	Further integrating in time, we deduce that
	\begin{equation}\label{Energy***}
		\begin{aligned}
			\frac{1}{c} \norm{\partial_t E^c}_{L^\infty(\mathbb{R}^+;\dot{H}^{\eta-1})}
			+&\norm{E^c}_{L^\infty(\mathbb{R}^+;\dot{H}^\eta)}
			+\norm {\partial_t E^c}_{L^2(\mathbb{R}^+;\dot{H}^{\eta-1})}
			\\
			&\lesssim
			\frac{1}{c} \norm{\partial_t E^c_0}_{\dot{H}^{\eta-1}}
			+\norm{E^c_0}_{\dot{H}^\eta}
			+\frac 1{c}
			\norm{\partial_t P (u^c\times B^c)}_{L^2(\mathbb{R}^+;\dot{H}^{\eta-1})}.
		\end{aligned}
	\end{equation}
	
	We are now left with estimating $ \partial_t P (u^c\times B^c)$, above. To this end, we exploit Euler and Faraday's equations from \eqref{EM} to write
	\begin{equation*}
		\begin{aligned}
			\partial_t P( u^c \times B^c)
			&=P\big( \partial_t u^c\times B^c + u^c \times \partial_t B^c\big)
			\\
			&= P\Big(\big(P(j^c \times B^c) - P (u^c\cdot\nabla  u^c)\big)\times B^c - c u^c \times (\nabla \times E^c)\Big)
			= \mathcal{I}_1+\mathcal{I}_2+\mathcal{I}_3,
		\end{aligned}
	\end{equation*}
	where
	\begin{equation*}
		\begin{aligned}
			\mathcal{I}_1&=P\Big(\big(P(j^c \times B^c) \big)\times B^c\Big),
			\\
			\mathcal{I}_2&=-P\Big(\big(P (u^c\cdot\nabla  u^c)\big)\times B^c\Big),
			\\
			\mathcal{I}_3&=-cP\Big(u^c \times (\nabla \times E^c)\Big).
		\end{aligned}
	\end{equation*}
	Next, we utilize the paradifferential product laws for normal vector fields established in \cite{arsenio:houamed:2022} to control each term individually. This is a step where the normal structure \eqref{structure:2dim} plays a crucial role. For convenience, we have gathered the relevant product estimates from \cite{arsenio:houamed:2022} in the appendix.

	Thus, by Lemma \ref{lemma:paradiff} and the energy inequality \eqref{energy-inequa}, we obtain that
	\begin{equation*}
		\begin{aligned}
			\norm{\mathcal{I}_1}_{L^2_{t}\dot{H}^{\eta-1}}
			&\lesssim
			\norm{P(j^c \times B^c)}_{L^2_t\dot{H}^{\frac{\eta-1}{2}}}
			\norm{B^c}_{L^\infty_t\dot{H}^{\frac{\eta+1}{2}}}\\
			&\lesssim \norm{ j^c }_{L^2_{t}L^2} \norm{ B^c }_{L^\infty_t \dot{H}^{\frac{\eta+1}{2}}}^2
			\lesssim \mathcal{E}_0\norm{ B^c }_{L^\infty_t {H}^{\eta}}^2.
		\end{aligned}
	\end{equation*}
	As for the second term $\mathcal{I}_2$, by applying Lemma \ref{lemma:paradiff}, we find that
	\begin{equation*}
		\begin{aligned}
			\norm {\mathcal{I}_2}_{L^2_{t}\dot{H}^{\eta-1}}
			&\lesssim \norm { u^c \cdot \nabla u^c}_{L^\infty _tL^2}
			\norm{B^c}_{L^2_t\dot{H}^{\eta}}
			\\
			&\lesssim \norm{ u^c }_{L^\infty _tL^\infty}
			\norm { u^c }_{L^\infty _t\dot{H}^1}
			\norm{B^c}_{L^2_t\dot{H}^{\eta}}.
		\end{aligned}
	\end{equation*}
	Finally, for the last term $\mathcal{I}_3$, using Lemma \ref{lemma:paradiff}, again, yields that
	\begin{equation*}
		\begin{aligned}
			\norm {\mathcal{I}_3}_{L^2_{t}\dot{H}^{\eta-1}} 
			&\lesssim \norm { u^c }_{L^\infty _t(L^\infty \cap \dot{H}^1)}
			\norm{c \nabla \times E^c}_{L^2_t\dot{H}^{\eta-1}}
			\\
			&\lesssim \norm { u^c }_{L^\infty _t(L^\infty \cap \dot{H}^1)}
			\norm{c E^c}_{L^2_t\dot{H}^{\eta}}.
		\end{aligned}
	\end{equation*}
	All in all, in view of the uniform bounds \eqref{propagation:damping:2}, we deduce that the control
	\begin{equation*}
		\partial_t P( u^c \times B^c)\in L^2(\mathbb{R}^+;\dot{H}^{\eta-1}), \quad \forall \eta \in [1,s],
	\end{equation*}
	holds uniformly in $c$.
	
	Now, by incorporating the control of $\partial_t P( u^c \times B^c)$ into \eqref{Energy***}, we find that
	\begin{equation*}
		\frac{1}{c} \norm{\partial_t E^c}_{L^\infty(\mathbb{R}^+;\dot{H}^{\eta-1})}
		+\norm{E^c}_{L^\infty(\mathbb{R}^+;\dot{H}^\eta)}
		+\norm {\partial_t E^c}_{L^2(\mathbb{R}^+;\dot{H}^{\eta-1})}
		\lesssim
		\frac{1}{c} \norm{\partial_t E^c_0}_{\dot{H}^{\eta-1}}
		+\norm{E^c_0}_{\dot{H}^\eta}
		+\frac {C_*}{c}.
	\end{equation*}
	At last, employing Amp\`ere's equation to substitute $\frac 1c\partial_t E^c$ with $\nabla\times B^c-j^c$ completes the proof.
\end{proof}

\subsection{Proof of Theorem \ref{thm-EM}}\label{sec.proof THM}

We proceed in two steps. First, we show the strong global convergence of the velocity field $u^c$ through an application of Theorem \ref{thm2} combined with Proposition \ref{prop:CV}. Then, we show the stability of the magnetic field $B^c$ by performing a suitable energy estimate on a transport--diffusion equation with a vanishing source term.

\subsubsection{Stability of the velocity field}

Employing the normal structure \eqref{structure:2dim}, we write the momentum equation from \eqref{EM} as
 $$ \partial_t u^c +u^c \cdot\nabla u^c    +  \nabla \left( p^c + \frac{|B^c|^2}{2}-(\mathrm{Id}-P)g^c\right) =  Pg^c,$$
 where 
 $$ g^c \bydef  \left(j^c- \nabla \times B^c\right) \times B^c.$$
We want now to apply Theorem \ref{thm2} to the above system to deduce the convergence of $u^c$, as $ c\rightarrow \infty$. To that end, we need to establish, first, that $ Pg^c $ vanishes in $ L^1 ([0,T];H^1(\mathbb{R}^2))$ and, second, that $\curl g^c$ remains uniformly bounded in $L^1([0,T];L^\infty(\mathbb{R}^2))$.

In order to prove the vanishing of $Pg^c$, we employ the paradifferential calculus estimate from Lemma \ref{lemma:paradiff} to find that
 $$   \norm {Pg^c}_{L^1_tL^2} \lesssim \norm { \nabla \times B^c - j^c}_{L^2_{t}L^2} \norm {B^c}_{L^2_t\dot{H}^1} .$$
 Furthermore, noticing that
\begin{equation*}
	\curl g^c = \left( \nabla \times B^c - j^c\right)  \cdot \nabla  B^c
	= - j^c  \cdot \nabla  B^c,
\end{equation*}
we see, by H\"older's inequality, that
  $$   \norm { P g^c}_{L^1_t\dot H^1}\lesssim\norm { \curl g^c}_{L^1_tL^2} \lesssim \norm { \nabla \times B^c - j^c}_{L^2_{t}L^2} \norm {B^c}_{L^2_t \dot{W}^{1,\infty}} .$$
  Hence, since the assumptions of Theorem \ref{thm-EM} guarantee that $E^c_0$ vanishes in $L^2$ and that $B^c$ remains bounded in $L^2_t(\dot H^1\cap \dot W^{1,\infty})$, we deduce from \eqref{E-NL:es}  that $Pg^c\to 0$ in $ L^1 ([0,T];H^1(\mathbb{R}^2))$.

As for the uniform bound of $\curl g^c$ in $ L^1 ([0,T];L^\infty(\mathbb{R}^2))$, it follows immediately upon noticing that both $j^c$ and $\nabla B^c$ are uniformly bounded in $ L^2 ([0,T];L^\infty(\mathbb{R}^2))$, as emphasized in \eqref{propagation:damping:2}.

We can now apply Theorem \ref{thm2} to conclude that
\begin{equation}\label{velocity:convergence}
	\lim_{c\rightarrow\infty}\sup_{t\in [0,T]} \norm {u^c(t)-u(t)} _{H^1(\mathbb{R}^2)} =0,
\end{equation}
thereby completing the proof of stability of the velocity field.

\subsubsection{Stability of the magnetic field.}

By suitably combining Faraday's equation with Ohm's law from \eqref{EM}, we observe that
\begin{equation*}
	\partial_t B^c+u^c\cdot\nabla B^c-\frac 1\sigma \Delta B^c=\frac 1\sigma\nabla\times(\nabla\times B^c- j^c).
\end{equation*}
Then, subtracting this equation with \eqref{mhd}, we see that
\begin{equation*} 
\partial_t \widetilde{B}^c + u^c\cdot \nabla \widetilde{B}^c - \frac 1\sigma \Delta \widetilde{B}^c = - (u^c- u)\cdot \nabla B+ \frac 1\sigma \nabla \times   ( \nabla \times B^c - j^c), 
\end{equation*}
where $\widetilde{B}^c\bydef B^c-B$,
with the initial data $ \widetilde{B}^c_0\bydef B^c_0-B_0$.

By a classical $L^2$-energy estimate, we find that
\begin{equation*}
	\begin{aligned}
		\|\widetilde{B}^c\|_{L^\infty([0,T];L^2)} &+ \|\widetilde{B}^c\|_{L^2([0,T];\dot{H}^1)}
		\\
		&\lesssim
		\|\widetilde{B}^c_0\|_{  L^2}
		+
		\left\|\nabla\times\left((u^c- u)\times B
		+ \frac 1\sigma
		( \nabla \times B^c - j^c)\right)\right\|_{L^2([0,T];\dot{H}^{-1})}
		\\
		&\lesssim
		\|\widetilde{B}^c_0\|_{  L^2}
		+  \norm {u^c- u}_{L^\infty([0,T];L^2)} \norm B_{ {L^2([0,T];\dot{H}^{1})}}
		+ \norm {\nabla \times B^c - j^c }_{{L^2([0,T];L^2)}},
	\end{aligned}
\end{equation*}
where we applied Lemma \ref{lemma:paradiff} to deduce the obtain estimate
\begin{equation*}
	\begin{aligned}
		\left\|\nabla\times\left((u^c- u)\times B\right)
		\right\|_{L^2([0,T];\dot{H}^{-1})}
		&\lesssim
		\left\|P\left((u^c- u)\times B\right)
		\right\|_{L^2([0,T];L^2)}
		\\
		&\lesssim
		\norm {u^c- u}_{L^\infty([0,T];L^2)} \norm B_{ {L^2([0,T];\dot{H}^{1})}}.
	\end{aligned}
\end{equation*}
It then follows from \eqref{E-NL:es} and \eqref{velocity:convergence} that
\begin{equation*}
	\lim_{c\rightarrow \infty}\left( \|\widetilde{B}^c \|_{L^\infty([0,T];  L^2)} + 	\|\widetilde{B}^c\|_{L^2([0,T]; \dot{H}^1)}\right) = 0,
\end{equation*}
which completes the proof of stability of the magnetic field.

At last, writing
$$ \norm {j^c - \nabla \times B}_{L^2([0,T]; L^2) } \leq
\norm {B^c -   B}_{L^2([0,T]; \dot H^1) } + \norm {j^c - \nabla \times B^c}_{L^2([0,T]; L^2) },$$
it is readily seen that $j^c$ converges to $\nabla\times B$ in $L^2([0,T]; L^2)$, which completes the proof of the theorem. \qed

\appendix

\section{Paradifferential calculus and the normal structure}

	It was noticed in \cite{arsenio:houamed:2022} that the normal structure \eqref{structure:2dim} of solutions to the Euler--Maxwell system could be exploited to extend the classical product laws of paradifferential calculus to a wider range of parameter. We recall here a simplified statement of Lemma 3.4 from \cite{arsenio:houamed:2022} which provides the product laws which are relevant to the present work. A complete justification of this result can be found in \cite{arsenio:houamed:2022}.  
	
	\begin{lem}\label{lemma:paradiff}
		Let $s\in (-1,2)$ and let $F,G: \mathbb{R}^2 \rightarrow \mathbb{R}^3$ be divergence-free vector fields having the normal structure
		\begin{equation*} 
			F(x)=
			\begin{pmatrix}
				F_1( x)\\F_2( x)\\0
			\end{pmatrix}
			\qquad\text{and}\qquad
			G(x)=
			\begin{pmatrix}
				0\\0\\G_3(x)
			\end{pmatrix}.
		\end{equation*}
		Then, for any $\eta\in (-\infty,1)$ and $s\in (-\infty,2)$, with $\eta + s >0$, one has the product law
		\begin{equation*}
			\norm{P(F\times G)}_{\dot{H}^{\eta+ s -1}} \lesssim \norm F_{\dot{H}^\eta} \norm G_{\dot{H}^s}.
		\end{equation*}
		Moreover, in the endpoint case $\eta=1$, one has that
		\begin{equation*}
			\norm{P(F\times G)}_{\dot{H}^{s }} \lesssim \norm F_{L^\infty \cap \dot{H}^1} \norm G_{\dot{H}^s},
		\end{equation*}
		for any $s\in (-1,2)$.
\end{lem}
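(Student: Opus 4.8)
The plan is to exploit the two-dimensional incompressibility of $F$ to write it as a rotated gradient, and then to combine Bony's paraproduct decomposition with the algebraic identity $P\nabla=0$; this is precisely the mechanism that pushes the product law past the classical threshold $s<1$ up to $s<2$. First I would record the reduction: the normal structure gives the pointwise identity $F\times G=G_3\,F^\perp$, where $F^\perp\bydef(F_2,-F_1)$. Since $\div F=0$, the planar field $F^\perp$ is curl-free on $\mathbb{R}^2$, hence $F^\perp=\nabla\psi$ with $\psi=\Delta^{-1}\curl F$; in particular $\|\psi\|_{\dot H^{\eta+1}}\lesssim\|F\|_{\dot H^\eta}$ and, in the endpoint case, $\|\nabla\psi\|_{L^\infty}=\|F\|_{L^\infty}$.

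Next I would apply Bony's decomposition componentwise,
$$
G_3\,F^\perp=T_{G_3}F^\perp+T_{F^\perp}G_3+R(G_3,F^\perp),
$$
and treat the three pieces separately. The paraproduct $T_{F^\perp}G_3$ and the remainder $R(G_3,F^\perp)$ are handled by the standard estimates: using $\dot H^\eta(\mathbb{R}^2)\hookrightarrow\dot B^{\eta-1}_{\infty,\infty}$ together with the usual paraproduct bound (applicable since $\eta-1<0$) one gets $\|T_{F^\perp}G_3\|_{\dot H^{\eta+s-1}}\lesssim\|F\|_{\dot H^\eta}\|G\|_{\dot H^s}$, while the remainder maps $\dot H^\eta\times\dot H^s$ into $\dot H^{\eta+s-1}$ as soon as $\eta+s>0$. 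In the endpoint $\eta=1$ the first bound becomes $\|T_{F^\perp}G_3\|_{\dot H^s}\lesssim\|F\|_{L^\infty}\|G\|_{\dot H^s}$, and the remainder lands in $\dot H^s$ provided $s>-1$, which accounts for the stated range.

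The crucial term is $P\big(T_{G_3}F^\perp\big)$, which cannot be estimated directly once $s\geq1$ because $G_3$ then sits at too high a regularity for $T_{G_3}$ to act boundedly. Here I would use $F^\perp=\nabla\psi$ together with the Leibniz identity
$$
S_{j-1}G_3\,\nabla\Delta_j\psi=\nabla\big(S_{j-1}G_3\,\Delta_j\psi\big)-\big(S_{j-1}\nabla G_3\big)\,\Delta_j\psi,
$$
summed over $j$, to rewrite $T_{G_3}F^\perp=\nabla\big(T_{G_3}\psi\big)-T_{\nabla G_3}\psi$. Since $P\nabla=0$, this yields $P\big(T_{G_3}F^\perp\big)=-P\big(T_{\nabla G_3}\psi\big)$, and the surviving paraproduct is now harmless: $\nabla G_3\in\dot H^{s-1}(\mathbb{R}^2)\hookrightarrow\dot B^{s-2}_{\infty,\infty}$ with $s-2<0$, so
$$
\|T_{\nabla G_3}\psi\|_{\dot H^{\eta+s-1}}\lesssim\|\nabla G_3\|_{\dot B^{s-2}_{\infty,\infty}}\,\|\psi\|_{\dot H^{\eta+1}}\lesssim\|G\|_{\dot H^s}\,\|F\|_{\dot H^\eta},
$$
and similarly $\|T_{\nabla G_3}\psi\|_{\dot H^s}\lesssim\|G\|_{\dot H^s}\|F\|_{\dot H^1}$ in the endpoint. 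As the Leray projector is bounded on every $\dot H^\sigma(\mathbb{R}^2)$, collecting the three contributions gives both claimed bounds.

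The main obstacle is exactly this last maneuver: recognizing that the only piece obstructing the classical range can, thanks to $\div F=0$, be recast as $-P\big(T_{\nabla G_3}\psi\big)$, thereby moving a derivative off $F^\perp$ and onto $G_3$, where it is absorbed by the single extra degree of smoothness that the stream function $\psi$ enjoys over $F$. The remaining work is routine bookkeeping: justifying the homogeneous paraproduct manipulations (convergence of the low-frequency partial sums, which holds under the stated index restrictions) and carrying the generic case $\eta<1$ and the endpoint $\eta=1$ in parallel throughout.
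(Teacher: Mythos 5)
The paper does not prove this lemma itself: it cites Lemma 3.4 of \cite{arsenio:houamed:2022} for a complete justification, so there is no in-paper argument to compare against. Your derivation is, however, correct and self-contained. The reduction $F\times G=G_3F^\perp$ with $F^\perp=\nabla\psi$ (valid because $\div F=0$ forces $\curl F^\perp=0$), together with the standard paraproduct and remainder bounds for $T_{F^\perp}G_3$ and $R(G_3,F^\perp)$, correctly accounts for the restrictions $\eta<1$ and $\eta+s>0$. The decisive step is then the paradifferential Leibniz identity $T_{G_3}\nabla\psi=\nabla(T_{G_3}\psi)-T_{\nabla G_3}\psi$ combined with $P\nabla=0$, which recasts the otherwise unbounded piece as $-P(T_{\nabla G_3}\psi)$; since $\nabla G_3\in \dot H^{s-1}(\mathbb{R}^2)\hookrightarrow\dot B^{s-2}_{\infty,\infty}$ has a negative index precisely when $s<2$, and $\norm{\psi}_{\dot H^{\eta+1}}=\norm{F}_{\dot H^\eta}$, the surviving paraproduct lands in $\dot H^{\eta+s-1}$ with the right bound. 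This is exactly the mechanism by which the divergence-free structure of $F$ buys the extended range, and the endpoint $\eta=1$ follows by the same scheme with $L^\infty$ in place of the negative-index Besov embedding for $T_{F^\perp}G_3$, while the remainder furnishes the residual constraint $s>-1$.
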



\bibliographystyle{plain} 
\bibliography{BIB.bib}

\end{document}